\documentclass{amsart}
\usepackage{hyperref}
\usepackage{graphicx}
\usepackage{amsmath}
\usepackage{amssymb}
\usepackage{amsthm}
\usepackage{dsfont}
\usepackage{xcolor}
\usepackage{mathrsfs}
\usepackage{accents}
\usepackage{resmes}
\usepackage{comment}

\numberwithin{equation}{section}
\newtheorem{theorem}{Theorem}[section]
\newtheorem{lemma}[theorem]{Lemma}
\newtheorem{proposition}[theorem]{Proposition}
\newtheorem{corollary}[theorem]{Corollary}
\newtheorem{definition}[theorem]{Definition}

\newtheorem{remark}[theorem]{Remark}
\newtheorem{example}[theorem]{Example}

\newcommand{\dd}{\mathrm{d}}

\newcommand{\cF}{\mathcal{F}}

\newcommand{\cP}{\mathcal{P}}
\newcommand{\cQ}{\mathcal{Q}}
\newcommand{\cT}{\mathcal{T}}
\newcommand{\cW}{\mathcal{W}}
\newcommand{\bE}{\mathbb{E}}

\newcommand{\bR}{\mathbb{R}}

\DeclareMathOperator*{\argmin}{arg\,min}

\DeclareMathOperator{\proj}{proj}
\numberwithin{equation}{section}

\begin{document}

\title{Shape-constrained density estimation with Wasserstein projection}

\author{Takeru Matsuda}
\address{Department of Mathematical Informatics, University of Tokyo \& RIKEN Center for Brain Science}
\email{matsuda@mist.i.u-tokyo.ac.jp}

\author{Ting-Kam Leonard Wong}
\address{Department of Statistical Sciences, University of Toronto}
\email{tkl.wong@utoronto.ca}


\keywords{Shape-constrained density estimation, Wasserstein projection, log-concave density}
\date{}



\begin{abstract}
Statistical inference based on optimal transport offers a different perspective from that of maximum likelihood, and has increasingly gained attention in recent years. In this paper, we study univariate nonparametric shape-constrained density estimation via projection with respect to the $p$-Wasserstein distance, with a focus on the quadratic case $p = 2$. By considering shape constraints given by displacement convex subsets of the Wasserstein space, Wasserstein projection estimation is a convex optimization problem. We focus on two fundamental examples, namely non-increasing densities on $\bR_+ := [0, \infty)$ and log-concave densities on $\bR$. In each case, we prove structural properties of the Wasserstein projection estimator, propose a discretization which can be implemented by off-the-shelf solvers, and compare the projection estimator with the corresponding  maximum likelihood estimator. 
\end{abstract}

\maketitle

\section{Introduction} \label{sec:intro}

In this paper, we apply Wasserstein projection in optimal transport to study nonparametric shape-constrained density estimation. For theoretical and computational tractability, we focus on the univariate setting where displacement convexity in optimal transport is equivalent to ordinary convexity in the space of quantile functions.

Let $X_1, \ldots, X_n$ be independent samples from an unknown distribution $\mu^*$. Let $\cF$ be a given set of distributions, possibly infinite dimensional, that encodes the shape constraint; it represents our statistical model. The problem is to come up with an estimator $\hat{\mu}_n$ of $\mu^*$ with values in $\cF$. We allow the model $\cF$ to be misspecified, so it is possible that $\mu^* \notin \cF$. Two important  cases that will be studied in detail in this paper are (i) the space of non-increasing densities on $\bR_+$ (the monotone case, see Section \ref{sec:monotone}) and (ii) the space of log-concave densities on $\bR$ (the log-concave case, see Section \ref{sec:log.concave}).

A popular approach to this problem is maximum likelihood estimation. Assuming that each element of $\cF$ has a density with respect to the Lebesgue measure (to be identified with the distribution itself), the (nonparametric) {\it maximum likelihood estimator} (MLE) is defined as the solution to the optimization problem
\begin{equation} \label{eqn:nonparametric.MLE}
\max_{f \in \cF} \sum_{i = 1}^n \log f(X_i),
\end{equation}
provided that it is well-posed. In the monotone case, this leads to {\it Grenander's estimator} introduced in \cite{G56}. The log-concave case was studied in \cite{W02}. Both, and other, shape constraints have been intensively studied in the literature, see for example the monograph \cite{GJ14} and the references \cite{D24, HWR26, S18, SS18}. 

In this paper, we study an alternative approach to the estimation problem, by using {\it optimal transport} \cite{S15, V03, V08}. For $p \geq 1$, let $\cW_p(\mu, \nu)$ be the {\it $p$-Wasserstein distance} between distributions $\mu$ and $\nu$ on $\bR$ (necessary concepts in optimal transport will be reviewed in Section \ref{sec:OT}). Let $\mu_n := \sum_{i = 1}^n (1/n)\delta_{X_i}$ be the empirical distribution of the data. Our {\it Wasserstein projection estimator} is defined by
\begin{equation} \label{eqn:Wasserstein.projection.intro}
\hat{\mu}_n := \argmin_{\nu \in \cF} \cW_p(\nu, \mu_n), 
\end{equation}
which may be regarded as a special case of the {\it minimum Kantorovich estimator} \cite{BBR06}. In order that the Wasserstein projection estimator exists uniquely (see Theorem \ref{thm:Wasserstein.projection}), we assume (i) $p > 1$, (ii)  distributions in $\cF$ have finite $p$-th moment, and (iii) $\cF$ is closed with respect to $\cW_p$ and is {\it displacement convex} in the sense of McCann \cite{M94}. These conditions not only guarantee that \eqref{eqn:Wasserstein.projection.intro} is a convex optimization problem, but also accommodate a wide variety of shape constraints. In most of the paper, we restrict further to the case $p = 2$, where the Wasserstein projection satisfies a desirable Lipschitz property with respect to $\cW_2$. From this, finite sample performance of the estimator in Wasserstein distance can be established using known results on convergence of empirical distributions (Propositions \ref{prop:rate1} and \ref{prop:log.concave.bounds}).

Maximum likelihood estimation \eqref{eqn:nonparametric.MLE} and Wasserstein projection estimation \eqref{eqn:Wasserstein.projection.intro} differ in the choice of the underlying geometry on the space of probability measures. While maximum likelihood estimation may be regarded as a projection with respect to the Kullback--Leibler divergence, Wasserstein projection incorporates the underlying Euclidean geometry of the state space. This leads to different behaviors of the estimators, especially in the misspecified setting. 

Our main contributions in this paper are to establish general properties of the Wasserstein projection estimator in the context of shape-constrained density estimation, and (focusing on the case $p = 2$) prove structural properties in the monotone and log-concave settings:
\begin{itemize}
\item In the monotone case, the estimated density is piecewise constant  and compactly supported (Theorem \ref{thm:monotone.characterization}).
\item In the log-concave case, the estimated density is piecewise log-affine and compactly supported (Theorem \ref{thm:log.concave.characterization}).
\end{itemize}
Although these properties appear to be qualitatively similar to those of the maximum likelihood estimators, there are important differences. For example, the support of the estimated density is generally {\it not} equal to the convex hull of the data points, and the set of break points (of intervals where the density is constant or log-affine) may {\it not} be a subset of the data points. To give a quick example (see Example \ref{eg:log.concave.uniform} for details), suppose that the data $\mu_n = (1/2) \delta_{-1} + (1/2) \delta_1$ is uniformly distributed on $\{-1, 1\}$, and $\cF$ is the set of log-concave distributions. It is well known that the maximum likelihood estimator returns the uniform distribution $\mathrm{Unif}(-1, 1)$. Interestingly, the $2$-Wasserstein projection estimator returns $\mathrm{Unif}(-3/2, 3/2)$, which has a wider support. Even in our univariate setting, the proofs of the above structural results are significantly more delicate than those of the classical counterparts. Intuitively, this is because optimal transport incorporates the geometry of the state space. Extending these results to the multivariate setting is interesting but requires different techniques due to the curvature of the Wasserstein space.

To illustrate empirically the performance of the Wasserstein projection estimator, we formulate and implement tractable discretizations for monotone and log-concave density estimation, which in our setting are $L^2$ (or more generally $L^p$) projections onto certain closed convex cones in the space of quantile functions. Since quantile functions are monotone by definition, our projection problems are  mathematically related to {\it isotonic regression} (see for example \cite{BB72}), but are more involved because of the shape constraints. We implement these algorithms in R \cite{R}.\footnote{Codes are available at \url{https://github.com/tkl-wong/wasserstein-projection-estimation}.} We report several experiments, using simulated data, to compare and contrast the Wasserstein projection estimator with the maximum likelihood estimator.

Our work contributes to the growing literature of statistical inference based on optimal transport,\footnote{This is related to, but different from, inference of transport-based quantities (such as optimal transport maps and Wasserstein barycenters) from samples, and modeling of distributional data.} as opposed to classical methods especially maximum likelihood. Recently, an intriguing framework of {\it Wasserstein information geometry} has been developed in the parametric setting \cite{LZ19,A24,AM24}; of fundamental importance are the Wasserstein information matrix and the resulting Wasserstein--Cram\'{e}r--Rao inequality \cite{NM25,TJS25}. It is interesting to understand to what extent this theory extends to semiparametric and nonparametric settings. Density estimation under Wasserstein distance has been studied by many authors. In particular, minimax estimation of smooth densities in Wasserstein distance was studied in \cite{NB22, wang2022minimax}, but without shape constraints. Note that smoothness of the density is not assumed in this paper\footnote{A non-increasing density may have jump discontinuities, while a log-concave density is continuous (and differentiable except possibly at countably many points) in the interior of its support.}. Statistical properties of the Wasserstein projection estimator have been investigated in one-dimensional location-scale models \cite{AM22} and circular models \cite{OM25}. Shape constraints were considered in   \cite{cumings2017shape}, where the author adopted a computational approach and did not prove structural results of the kind obtained here. 

The rest of the paper is organized as follows. In Section \ref{sec:prelim}, we introduce the Wasserstein projection estimator after recalling some basic concepts in optimal transport, and establish general properties including consistency and affine equivariance. In Sections \ref{sec:monotone} and \ref{sec:log.concave}, we specialize respectively to monotone and log-concave density estimation, and prove our main results about structural properties of the Wasserstein projection estimator. Section \ref{sec:implementation} implements tractable discretizations of Wasserstein projection estimation in the monotone and log-concave settings. In Section \ref{sec:discussion} we conclude and discuss some directions for future research. 

\section{Optimal transport and Wasserstein projection}  \label{sec:prelim}

\subsection{Wasserstein geometry of univariate distributions} \label{sec:OT}
Let $\cP(\bR)$ be the set of Borel probability measures on the real line $\bR$. If $\mu \in \cP(\bR)$ and $T$ is a Borel measurable map from $\bR$ into itself, we let $T_{\#} \mu := \mu \circ T^{-1} \in \cP(\bR) $ be the {\it pushforward} of $\mu$ under $T$. Probabilistically, this means that if $X$ is a random variable distributed as $\mu$, then $T(X)$ is distributed as $T_{\#} \mu$. For $\mu \in \cP(\bR)$, we let $Q_{\mu}: (0, 1) \rightarrow \bR$ be its (left-continuous) {\it quantile function} defined by
\begin{equation} \label{eqn:quantile.function}
Q_{\mu}(u) := \inf\{ x \in \bR: \mu((-\infty, x]) \geq u\}, \quad u \in (0, 1).
\end{equation}
When it is convenient to refer to the boundary values (e.g.~when $\mu$ is compactly supported or equivalently $Q_{\mu}$ is bounded), we take $Q_{\mu}(0) := \lim_{u \downarrow 0} Q_{\mu}(u)$ and $Q_{\mu}(1) := \lim_{u \uparrow 1} Q_{\mu}(u)$ which are respectively the $\inf$ and $\sup$ of the support of $\mu$. By construction, we have
\begin{equation} \label{eqn:quantile.transform}
(Q_{\mu})_{\#} \mathrm{Unif}(0, 1) = \mu,
\end{equation}
where $\mathrm{Unif}(0, 1)$ is the uniform distribution (Lebesgue measure) on $(0, 1)$. Let
\begin{equation*}
\begin{split}
\cQ &:= \{ Q_{\mu} : \mu \in \cP(\bR)\} \\
&= \{ Q: (0, 1) \rightarrow \bR \text{ non-decreaseing and left-continuous}\}
\end{split}
\end{equation*}
be the set quantile functions. The mapping $\mu \mapsto Q_{\mu}$ defines a bijection from $\cP(\bR)$ onto $\cQ$. Its inverse is given by the pushforward  \eqref{eqn:quantile.transform}. The convenience of working with quantile functions is one of our main reasons for restricting to univariate distributions in this paper. 

For $p \in [1, \infty)$, let 
\[
\cP_p(\bR) := \left\{ \mu \in \cP(\bR): \int_{\bR} |x|^p \dd \mu(x) < \infty
\right\}
\]
be the space of probability measures on $\bR$ with finite $p$-th moment. The corresponding space of quantile functions is given by
\begin{equation*}
\begin{split}
\cQ_p &:= \{ Q_{\mu} \in \cQ : \mu \in \cP_p(\bR) \} = \cQ \cap L^p([0, 1]) \\
&= \left\{ Q \in \cQ: \|Q\|_p^p := \int_0^1 |Q(u)|^p \dd u < \infty  \right\}.
\end{split}
\end{equation*}


\begin{definition}[$p$-Wasserstein distance] \label{def:Wasserstein.distance}
Let $p \in [1, \infty)$. For $\mu, \nu \in \cP_p(\bR)$, we define the $p$-Wasserstein distance $\cW_p(\mu, \nu)$ between $\mu$ and $\nu$ by
\begin{equation} \label{eqn:p.Wasserstein.distance}
\cW_p(\mu, \nu) := \left( \inf_{\pi \in \Pi(\mu, \nu)} \int_{\bR \times \bR} |x - y|^p \dd \pi(x, y) \right)^{1/p},
\end{equation}
where $\Pi(\mu, \nu)$ is the set of probability measures $\pi$ on $\bR \times \bR$ whose first and second marginals are respectively $\mu$ and $\nu$. Elements of $\Pi(\mu, \nu)$ are called couplings of $(\mu, \nu)$.\footnote{By an abuse of terminology, we also call a pair $(X, Y)$ of random variables a coupling of $(\mu, \nu)$ if $(X, Y) \sim \pi \in \Pi(\mu, \nu)$.}
\end{definition}

For univariate distributions, the $p$-Wasserstein distance admits an explicit formula in terms of the quantile functions. 

\begin{proposition} [Isometry] \label{prop:Wasserstein.quantile}
Let $p \in [1, \infty)$. For $\mu, \nu \in \cP_p(\bR)$, we have 
\begin{equation}\label{eqn:Wasserstein.quantile}
\cW_p(\mu, \nu) = \| Q_{\mu} - Q_{\nu} \|_p = \left( \int_0^1 |Q_{\mu}(u) - Q_{\nu}(u)|^p \dd u \right)^{1/p}.
\end{equation}
In particular, $\cP_p(\bR)$ (equipped with the $p$-Wasserstein distance) and $\cQ_p$ (equipped with the $L^p$ distance) are isometric via the mapping $\mu \mapsto Q_{\mu}$.
\end{proposition}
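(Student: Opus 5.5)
We prove the two inequalities separately: the quantile coupling gives the upper bound on $\cW_p(\mu,\nu)$, and a rearrangement argument shows no coupling does better.

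\emph{Upper bound.} Let $U \sim \mathrm{Unif}(0,1)$ and set $X := Q_\mu(U)$, $Y := Q_\nu(U)$. By \eqref{eqn:quantile.transform}, $X \sim \mu$ and $Y \sim \nu$, so the law of $(X,Y)$ lies in $\Pi(\mu,\nu)$. Plugging it into \eqref{eqn:p.Wasserstein.distance} gives
\[
\cW_p(\mu,\nu)^p \le \bE|X-Y|^p = \int_0^1 |Q_\mu(u)-Q_\nu(u)|^p \dd u .
\]
This integral is finite because $Q_\mu, Q_\nu \in \cQ_p \subset L^p$.

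\emph{Lower bound: reduction to finitely supported measures.} We must show that every $\pi \in \Pi(\mu,\nu)$ has cost at least $\|Q_\mu - Q_\nu\|_p^p$. First take $\mu = \frac1n\sum_i \delta_{x_i}$ and $\nu = \frac1n\sum_j \delta_{y_j}$ with $x_1\le\dots\le x_n$ and $y_1\le\dots\le y_n$. The coupling set $\Pi(\mu,\nu)$ is then the set of bistochastic matrices scaled by $1/n$. The cost is linear in the matrix, so by Birkhoff's theorem it is minimized at a permutation $\sigma$.

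\emph{Lower bound: the rearrangement step.} The function $c(t) = |t|^p$ is convex for $p \ge 1$, and this gives the exchange inequality: for $x \le x'$ and $y \le y'$,
\[
c(x-y)+c(x'-y') \le c(x-y')+c(x'-y).
\]
To see it, note that $x-y'$ and $x'-y$ are the extreme values and $x-y$, $x'-y'$ lie between them with the same sum. Repeatedly uncrossing inversions of $\sigma$ never increases the cost, so the identity permutation is optimal. Its cost is exactly $\int_0^1 |Q_\mu-Q_\nu|^p\dd u$, because both quantile functions are step functions on the intervals $((i-1)/n, i/n]$.

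\emph{Lower bound: passage to the limit.} For general $\mu,\nu\in\cP_p(\bR)$, approximate by the empirical measures $\mu_n,\nu_n$ with quantile functions $Q_\mu(i/n)$ on the cells, or better, by measures whose quantile functions are the cell averages of $Q_\mu$ and $Q_\nu$. These averages converge in $L^p$ to $Q_\mu$ and $Q_\nu$, since averaging is a contraction that converges on $L^p$. Then:
\begin{itemize}
\item $\cW_p$ is a metric, and $\cW_p(\mu_n,\mu) \le \|Q_{\mu_n}-Q_\mu\|_p \to 0$ by the upper bound already proved.
\item The discrete case gives $\cW_p(\mu_n,\nu_n) = \|Q_{\mu_n}-Q_{\nu_n}\|_p$.
\item The triangle inequality for $\cW_p$ then yields $\cW_p(\mu,\nu) \ge \|Q_{\mu_n}-Q_{\nu_n}\|_p - o(1) \to \|Q_\mu-Q_\nu\|_p$.
\end{itemize}

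This limiting step is the main obstacle, because it relies on $\cW_p$ satisfying the triangle inequality. That fact is standard, via the gluing lemma, and I would cite it; if a self-contained route were preferred, I would instead use the cyclical-monotonicity characterization of the support of optimal plans together with existence of optimal plans by tightness.

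\emph{Isometry.} The isometry statement is then immediate, since $\mu \mapsto Q_\mu$ is a bijection from $\cP_p(\bR)$ onto $\cQ_p$.
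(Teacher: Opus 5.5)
Your proof is correct. The paper gives no argument of its own here; it simply cites \cite[Theorem 2.10]{BL19}. Your proposal is therefore a genuinely self-contained alternative, built from three pieces:
\begin{itemize}
\item the comonotone coupling $(Q_\mu(U),Q_\nu(U))$ for the upper bound, which is the coupling the paper singles out right after the proposition;
\item Birkhoff's theorem plus the two-point exchange inequality for equal-weight $n$-point measures;
\item a density argument through cell averages and the triangle inequality for $\cW_p$.
\end{itemize}

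What your route buys is an elementary proof that uses only convexity of $t\mapsto|t|^p$. So $p=1$, where optimal couplings need not be unique (Example~\ref{eg:non.unique}), is treated on the same footing as $p>1$. An argument via cyclical monotonicity and uniqueness of the optimal plan needs extra work there.

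What the citation buys, besides brevity, is the general convex-cost formula \eqref{eqn:convex.cost} used in the remark that follows. Your discrete step already works for any convex $h\ge 0$, but your limit step does not, because it relies on the metric structure of $\cW_p$. You can drop that dependence, and the gluing lemma, entirely:
\begin{itemize}
\item Take $\pi\in\Pi(\mu,\nu)$ with $\int h(x-y)\,\dd\pi<\infty$, and draw i.i.d.\ pairs $(X_i,Y_i)\sim\pi$. The empirical pair measure couples the empirical marginals $\mu_n,\nu_n$.
\item The discrete case then gives $\int_0^1 h(Q_{\mu_n}-Q_{\nu_n})\,\dd u\le\frac1n\sum_{i=1}^n h(X_i-Y_i)$. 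By the strong law, the right-hand side tends almost surely to $\int h(x-y)\,\dd\pi$.
\item Almost surely $\mu_n\to\mu$ and $\nu_n\to\nu$ weakly. Hence $Q_{\mu_n}\to Q_\mu$ and $Q_{\nu_n}\to Q_\nu$ at all common continuity points, so a.e.\ on $(0,1)$.
\item Fatou's lemma now yields $\int_0^1 h(Q_\mu-Q_\nu)\,\dd u\le\int h(x-y)\,\dd\pi$.
\end{itemize}

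Two minor points.
\begin{itemize}
\item When atoms repeat, couplings of the two $n$-point measures are images of scaled bistochastic matrices rather than in bijection with them. Every coupling is represented by at least one such matrix, which is all your linearity argument needs.
\item Your first suggested approximation, with values $Q_\mu(i/n)$ on the cells, breaks down at $i=n$ when $\mu$ has unbounded support, since then $Q_\mu(1)=\infty$. The cell averages are the choice to keep.
\end{itemize}
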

\begin{proof}
See \cite[Theorem 2.10]{BL19}.
\end{proof}

A consequence of \eqref{eqn:Wasserstein.quantile} is the following. Given $\mu, \nu \in \cP_p(\bR)$, $p \in [1, \infty)$ and let $\pi \in \Pi(\mu, \nu)$ be the {\it comonotonic coupling} defined by the joint distribution of $(X, Y) = (Q_{\mu}(U), Q_{\nu}(U))$, where $U \sim \mathrm{Unif}(0, 1)$. In symbols, we have
\begin{equation} \label{eqn:monotone.coupling}
\pi = (Q_{\mu}, Q_{\nu})_{\#} \mathrm{Unif}(0, 1),
\end{equation}
where $(Q_{\mu}, Q_{\nu})$ denotes the mapping $u \mapsto (Q_{\mu}(u), Q_{\nu}(u))$ from $(0, 1)$ to $\bR^2$. Then $\pi$ is optimal for $\cW_p(\mu, \nu)$. In fact, by \cite[Theorem 2.9]{S15}, the comonotonic coupling is the unique optimal coupling when $p > 1$.

\begin{remark} [Convex costs]
More generally, one may consider optimal transport costs of the form
\[
\cT_c(\mu, \nu) := \inf_{\pi \in \Pi(\mu, \nu)} \int_{\bR \times \bR} c(x, y) \dd \pi(x, y),
\]
where $c: \bR \times \bR \rightarrow \bR_+$ is a given Borel measurable cost function. 
If $c(x, y) = h(x - y)$ where $h: \bR \rightarrow \bR_+$ is convex, we have 
\begin{equation} \label{eqn:convex.cost}
\cT_c(\mu, \nu) = \int_0^1 h \left( Q_{\mu}(u) - Q_{\nu}(u) \right) \dd u,
\end{equation}
and the comonotone coupling is optimal (uniquely so when $h$ is strictly convex). We recover \eqref{eqn:Wasserstein.quantile} by letting $c(x, y) = |x - y|^p$. Using \eqref{eqn:convex.cost}, some results of the paper can be formulated for general convex costs. For concreteness, in this paper we focus on the $p$-Wasserstein distance, especially the quadratic case $p = 2$.
\end{remark}

Given a subset $\cF$ of $\cP(\bR)$, we let
\begin{equation*} 
\cQ_{\cF} := \{ Q_{\mu} : \mu \in \cF \} \subset \cQ
\end{equation*}
be the set of quantile functions of elements of $\cF$.

\begin{definition}[Displacement convexity] \label{def:displacement.convex}
A subset $\cF$ of $\cP(\bR)$ is said to be displacement convex if $\cQ_{\cF}$ is a convex subset of $\cQ$ in the usual sense.
\end{definition}

\begin{remark}
Our definition of displacement convexity is consistent with the one introduced by McCann \cite{M97}. To see this, let $\mu_0, \mu_1 \in \cP(\bR)$ and let $\pi \in \Pi(\mu_0, \mu_1)$ be the comonotonic coupling of $(\mu_0, \mu_1)$ defined by \eqref{eqn:monotone.coupling}. For $t \in [0, 1]$, consider McCann's displacement interpolation given by
\begin{equation} \label{eqn:displacement.interpolation}
\mu_t = ((1 - t) Q_{\mu_0} + t Q_{\mu_1})_{\#} \mathrm{Unif}(0, 1) = (\Pi_t)_{\#} \pi,
\end{equation}
where $\Pi_t(x, y) := (1 - t) x + ty$ in the linear interpolation between the initial and final locations. By construction, the quantile function of $\mu_t$ is $Q_{\mu_t} = (1 - t) Q_{\mu_0} + t Q_{\mu_1}$. Thus, a set $\cF \subset \cP(\bR)$ is displacement convex in the sense of McCann if and only if $\cQ_{\cF}$ is convex.
\end{remark}

Our main examples in this paper are the space of non-increasing densities on $\bR_+ = [0, \infty)$ and the space of log-concave densities on $\bR$. These spaces will be analyzed in detail in Sections \ref{sec:monotone} and \ref{sec:log.concave}, respectively. Here we provide two other examples, the first of which has been studied in \cite{AM22}.

\begin{example} [Location-scale family] \label{eg:location.scale}
Let $\mu_0 \in \cP(\bR)$ be a fixed reference distribution such as the standard Gaussian. For $(m, \sigma) \in \bR \times \bR_+$, let $T_{m, \sigma}: \bR \rightarrow \bR$ be the affine transformation
\[
T_{m, \sigma}(x) = m + \sigma x.
\]
Consider the scale location family defined by $\cF = \{ (T_{m, \sigma})_{\#} \mu_0 : (m, \sigma) \in \bR \times \bR_+\}$. It is easy to see that 
\[
\cQ_{\cF} = \{ m + \sigma Q_{\mu_0}: (m, \sigma) \in \bR \times \bR_+ \},
\]
which is convex. Hence $\cF$ is displacement convex. 
\end{example}

\begin{example}[bi-Lipschitz quantile functions]
Given $0 < m < M < \infty$, let $\cF$ be the set of distributions whose quantile functions are bi-Lipchitz with constants $m$ and $M$:
\[
\cQ_{\cF} = \left\{ Q \in \cQ: \frac{1}{M} \leq \frac{Q(v) - Q(u)}{v - u} \leq \frac{1}{m} \text{ for } 0 \leq u < v \leq 1\right\}.
\]
It can be shown that $\mu \in \cF$ if and only if $\mu$ has a density $f$ which is bounded below and above on its support: $m \leq f(x) \leq M$. It is easy to see that $\cF$ is displacement convex. Note that elements of $\cF$ are compactly supported but there is no common support ($\cF$ is closed under translation). This family is irregular from the perspective of maximum likelihood but not from that of optimal transport. 
\end{example}

Next, we state a basic projection theorem which underlies our estimation procedure. We assume $p \in (1, \infty)$ to ensure uniqueness of the projection, see Example \ref{eg:non.unique} below.

\begin{theorem} [Wasserstein projection] \label{thm:Wasserstein.projection}
Let $p \in (1, \infty)$. Let $\cF$ be a non-empty subset of $\cP_p(\bR)$ which is displacement convex and closed with respect to $\cW_p$. Then there exists a mapping $\proj_{\cF}^{(p)}: \cP_p(\bR) \rightarrow \cF$ such that for $\mu \in \cP_p(\bR)$, $\proj_{\cF}^{(p)} \mu$ is the unique minimizer of the projection problem
\begin{equation} \label{eqn:Wasserstein.projection}
 \inf_{\nu \in \cF} \cW_p(\nu, \mu).
\end{equation}

Moreover, when $p = 2$, $\proj_{\cF} := \proj_{\cF}^{(2)}$ is $1$-Lipschitz, that is, 
\begin{equation} \label{eqn:Lipschitz}
\cW_2 (\proj_{\cF} \mu, \proj_{\cF} \nu) \leq \cW_2 (\mu, \nu), \quad \mu, \nu \in \cP_2
(\bR).
\end{equation}
\end{theorem}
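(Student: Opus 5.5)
Via the isometry of Proposition \ref{prop:Wasserstein.quantile}, the problem \eqref{eqn:Wasserstein.projection} becomes a nearest-point problem in $L^p([0,1])$: minimize $\|Q - Q_\mu\|_p$ over $Q \in \cQ_{\cF}$. The plan has three steps: show $\cQ_{\cF}$ is a nonempty closed convex subset of $L^p$; invoke the standard existence and uniqueness of nearest points in uniformly convex Banach spaces; and for $p=2$ use nonexpansiveness of the metric projection onto a closed convex subset of a Hilbert space.

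For the first step, $\cQ_{\cF}$ is convex by Definition \ref{def:displacement.convex}. It is nonempty and contained in $\cQ_p \subset L^p$ because $\cF \subset \cP_p(\bR)$ is nonempty. For closedness in $L^p$, let $Q_{\nu_k} \to Q$ in $L^p$ with $\nu_k \in \cF$. I first check that $Q$ has a representative in $\cQ_p$. A subsequence converges a.e., so $Q$ agrees a.e. with a non-decreasing function, and its left-continuous modification $\tilde Q$ lies in $\cQ_p$ and equals $Q$ in $L^p$. With $\nu := \tilde Q_{\#}\mathrm{Unif}(0,1) \in \cP_p(\bR)$, Proposition \ref{prop:Wasserstein.quantile} gives $\cW_p(\nu_k,\nu) = \|Q_{\nu_k} - \tilde Q\|_p \to 0$. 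Closedness of $\cF$ in $\cW_p$ then gives $\nu \in \cF$, so $\tilde Q \in \cQ_{\cF}$. I expect this identification of $L^p$-limits with genuine quantile functions, including the left-continuity normalization, to be the only mildly delicate point.

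For the second step, $L^p$ with $1<p<\infty$ is uniformly convex by Clarkson's inequalities. In a uniformly convex Banach space, every nonempty closed convex set $C$ admits a unique nearest point to any $x$. For existence, take a minimizing sequence $y_k$. The midpoints $(y_j+y_k)/2$ lie in $C$, so their distance to $x$ is at least $d := \inf_{y\in C}\|y-x\|_p$, and uniform convexity forces $(y_k)$ to be Cauchy. Completeness and closedness then give a minimizer. For uniqueness, if $y \neq y'$ both attain $d>0$, strict convexity of the norm makes their midpoint strictly closer, a contradiction; if $d=0$ uniqueness is trivial. If $Q^\ast$ is this nearest point to $Q_\mu$, define $\proj^{(p)}_{\cF}\mu$ as the measure with quantile function $Q^\ast$. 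The isometry shows it is the unique minimizer of \eqref{eqn:Wasserstein.projection}, since $\mu\mapsto Q_\mu$ is a bijection from $\cF$ onto $\cQ_{\cF}$. The failure of strict convexity for $p=1$ explains the restriction $p>1$.

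For the third step, with $p=2$, let $P$ be the metric projection onto the closed convex set $\cQ_{\cF}$ in $L^2$. It satisfies the variational inequality $\langle x - Px, y - Px\rangle \le 0$ for all $y \in \cQ_{\cF}$. Apply this for $x$ with $y = Px'$, and for $x'$ with $y = Px$, then add the two inequalities. This gives $\|Px - Px'\|_2^2 \le \langle x - x', Px - Px'\rangle$, and Cauchy--Schwarz yields $\|Px-Px'\|_2 \le \|x-x'\|_2$. Taking $x = Q_\mu$ and $x' = Q_\nu$, Proposition \ref{prop:Wasserstein.quantile} turns this into \eqref{eqn:Lipschitz}.
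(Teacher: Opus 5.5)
Your proposal is correct and follows the paper's proof. The paper uses the quantile isometry to pass to the $L^p$ nearest-point problem over $\cQ_{\cF}$, gets existence and uniqueness from the convexity structure of $L^p$ for $1<p<\infty$, and gets the $1$-Lipschitz bound from nonexpansiveness of metric projection in the Hilbert space $L^2$. Your version also spells out steps the paper leaves to ``standard results'': closedness of $\cQ_{\cF}$ in $L^p$ via the left-continuous modification of an $L^p$ limit, existence via uniform convexity, and the variational-inequality derivation of nonexpansiveness. All of these are handled correctly.
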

\begin{proof}
Identifying each $\nu \in \cF$ with its quantile function $Q_{\nu} \in \cQ_{\cF} \subset L^p([0, 1])$, the projection problem \eqref{eqn:Wasserstein.projection} is equivalent to the $L^p$ projection problem
\begin{equation} \label{eqn:Lp.projection}
\inf_{Q \in \cQ_{\cF}} \|Q - Q_{\mu}\|_p^p = \inf_{Q \in \cQ_{\cF}} \int_{[0, 1]} |Q(u) - Q_{\mu}(u)|^p \dd u.
\end{equation}
Existence of an optimal $\nu$ follows from standard results in functional analysis. Uniqueness follows from the strict convexity of $Q \mapsto \| Q - Q_{\mu}\|_p^p$. When $p = 2$, $L^2([0, 1])$ is a Hilbert space. Being the metric projection onto a closed convex set of a Hilbert space, $\proj_{\cF}$ is $1$-Lipschitz. 
\end{proof}

Since \eqref{eqn:Lp.projection} is a convex optimization problem, we immediately obtain the following first-order condition, which is both necessary and sufficient.

\begin{corollary}[First-order condition] \label{cor:first.order.condition}
In the context of Theorem \ref{thm:Wasserstein.projection}, 
$\hat{\mu} \in \cF$ is equal to $\proj_{\cF}^{(p)} \mu$ if and only if
\begin{equation} \label{eqn:first.order.condition}
\left. \frac{\dd}{\dd \epsilon} \right|_{\epsilon = 0^+} \int_{[0, 1]} |Q_{\hat{\mu}} + \epsilon (Q_{\nu} - Q_{\hat{\mu}}) - Q_{\mu}|^p \dd u \geq 0, \quad \text{for all } \nu \in \cF.
\end{equation}
In particular, when $p = 2$, \eqref{eqn:first.order.condition} is equivalent to
\begin{equation} \label{eqn:first.order.condition.p2}
\int_{[0, 1]} (Q_{\hat{\mu}} - Q_{\mu})(Q_{\nu} - Q_{\hat{\mu}})\dd u \geq 0, \quad \text{for all } \nu \in \cF.
\end{equation}

\end{corollary}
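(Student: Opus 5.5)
The plan is to reduce everything to a statement about minimizing a convex functional on a convex set in $L^p([0,1])$, via the isometry of Proposition \ref{prop:Wasserstein.quantile}. Write $\Phi(Q) := \int_0^1 |Q - Q_\mu|^p \dd u$ for $Q \in \cQ_{\cF} \subset L^p([0,1])$. By Theorem \ref{thm:Wasserstein.projection}, $\hat\mu = \proj^{(p)}_{\cF}\mu$ if and only if $Q_{\hat\mu}$ minimizes $\Phi$ over $\cQ_{\cF}$. Fix $\nu \in \cF$ and set $Q_\epsilon := Q_{\hat\mu} + \epsilon(Q_\nu - Q_{\hat\mu})$ for $\epsilon \in [0,1]$. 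By displacement convexity (Definition \ref{def:displacement.convex}), $Q_\epsilon \in \cQ_{\cF}$. Define $g(\epsilon) := \Phi(Q_\epsilon)$, which is a convex function on $[0,1]$, since $\Phi$ is convex on $L^p$ and the path is affine.

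\emph{Existence of the derivative.} First I will justify that the one-sided derivative in \eqref{eqn:first.order.condition} exists and equals
\[
\int_0^1 p\,|Q_{\hat\mu}-Q_\mu|^{p-2}(Q_{\hat\mu}-Q_\mu)(Q_\nu - Q_{\hat\mu})\dd u.
\]
For each $u$, the map $\epsilon \mapsto |Q_\epsilon(u) - Q_\mu(u)|^p$ is convex and $C^1$ because $p>1$. Its difference quotients are therefore monotone in $\epsilon$ and dominated, via the mean value theorem, by
\[
p\,\bigl(|Q_{\hat\mu}-Q_\mu| + |Q_\nu-Q_{\hat\mu}|\bigr)^{p-1}|Q_\nu - Q_{\hat\mu}|.
\]
This bound is integrable by H\"older's inequality with exponents $p/(p-1)$ and $p$, since all three functions lie in $L^p$. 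Dominated (or monotone) convergence then gives the formula. This integrability step is the only real technical point, and it is routine.

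\emph{Necessity.} If $Q_{\hat\mu}$ is the minimizer, then $g(\epsilon) \ge g(0)$ for all $\epsilon \in [0,1]$. Hence $(g(\epsilon)-g(0))/\epsilon \ge 0$, and letting $\epsilon \downarrow 0$ gives $g'(0^+) \ge 0$, which is \eqref{eqn:first.order.condition}.

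\emph{Sufficiency.} For a convex $g$ the difference quotient is non-decreasing in $\epsilon$, so $g(1) - g(0) \ge g'(0^+) \ge 0$. That is, $\Phi(Q_\nu) \ge \Phi(Q_{\hat\mu})$ for every $\nu \in \cF$, so $\hat\mu$ attains the infimum in \eqref{eqn:Wasserstein.projection}. By uniqueness in Theorem \ref{thm:Wasserstein.projection}, $\hat\mu = \proj^{(p)}_{\cF}\mu$.

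\emph{The case $p=2$.} Here the derivative is exactly $2\int_0^1 (Q_{\hat\mu}-Q_\mu)(Q_\nu-Q_{\hat\mu})\dd u$. This follows from expanding
\[
g(\epsilon) = \|Q_{\hat\mu}-Q_\mu\|_2^2 + 2\epsilon\,\langle Q_{\hat\mu}-Q_\mu,\, Q_\nu - Q_{\hat\mu}\rangle + \epsilon^2\|Q_\nu-Q_{\hat\mu}\|_2^2,
\]
so \eqref{eqn:first.order.condition} is equivalent to \eqref{eqn:first.order.condition.p2}. This is the standard variational inequality characterizing Hilbert-space projection onto a closed convex set.
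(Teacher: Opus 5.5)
Your proof is correct and follows the same route as the paper, which justifies the corollary only by noting that \eqref{eqn:Lp.projection} is a convex optimization problem, so the first-order condition is necessary and sufficient. You supply the details the paper leaves implicit: differentiation under the integral sign via the H\"older domination, and the monotonicity of chord slopes of the convex function $g$ for sufficiency.
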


The Lipschitz property \eqref{eqn:Lipschitz} is useful for studying the finite sample properties of the Wasserstein projection estimator. Unfortunately, the Lipschitz property generally does not hold when $p \neq 2$ (see \cite{BDC17} and the references therein). For this reason, in the rest of the paper we focus on the case $p = 2$.

\begin{example}[Non-uniqueness when $p = 1$] \label{eg:non.unique}
Let $\cF = \{ \delta_m : m \in \bR\}$ be the set of point masses. Clearly, $\cF$ is displacement convex. Let $\mu = (1/2) \delta_0 + (1/2) \delta_1$. It is easy to verify that
\[
\argmin_{\nu \in \cF} \cW_1(\nu, \mu) = \{ \delta_x : x \in [0, 1] \},
\]
so the minimizer is not unique. 
\end{example}

We note the following affine equivariance property.

\begin{proposition}[Affine equivariance] \label{prop:equivariance}
In the context of Theorem \ref{thm:Wasserstein.projection}, suppose $\cF$ is invariant under a nondegenerate affine transformation $T: \bR \rightarrow \bR$ of the form
\begin{equation*} 
T(x) = a + b x,
\end{equation*}
where $(a, b) \in \bR \times (\bR \setminus \{0\})$. That is, 
\begin{equation} \label{eqn:invariance}
\cF = T_{\#} \cF := \{ T_{\#} \nu : \nu \in \cF\}.
\end{equation}
Then, for any $\mu \in \cP_p(\bR)$ we have
\begin{equation} \label{eqn:equivariance}
\proj_{\cF}^{(p)} (T_{\#} \mu) = T_{\#} (\proj_{\cF}^{(p)} \mu).
\end{equation}
\end{proposition}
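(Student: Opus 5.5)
The plan is to show that $T_{\#}$ acts as a bijective map on $\cP_p(\bR)$ that rescales $\cW_p$ by the constant factor $|b|$ and maps $\cF$ onto itself. Then, by the uniqueness part of Theorem \ref{thm:Wasserstein.projection}, the minimizer of the transformed problem must be the image of the original minimizer.

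First, I will check that $T_{\#}$ maps $\cP_p(\bR)$ into itself. This holds because $|a+bx|^p \le 2^{p-1}(|a|^p+|b|^p|x|^p)$. The map is bijective with inverse $(T^{-1})_{\#}$, where $T^{-1}(y) = (y-a)/b$. Next, I will prove the scaling identity
\begin{equation*}
\cW_p(T_{\#}\nu, T_{\#}\mu) = |b|\, \cW_p(\nu, \mu), \quad \mu,\nu \in \cP_p(\bR).
\end{equation*}
I would argue this directly from Definition \ref{def:Wasserstein.distance}, not via quantile functions. The point is that when $b<0$ the quantile function of $T_{\#}\mu$ is $a + bQ_\mu(1-u)$ only up to left/right-continuity conventions, and I prefer to avoid that bookkeeping. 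The map $\pi \mapsto (T\times T)_{\#}\pi$ is a bijection from $\Pi(\mu,\nu)$ onto $\Pi(T_{\#}\mu, T_{\#}\nu)$, with inverse given by $T^{-1}\times T^{-1}$. Under this map the cost satisfies $\int |T(x)-T(y)|^p \dd\pi = |b|^p \int |x-y|^p\dd\pi$. Taking infima over couplings then gives the identity.

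With these facts, fix $\mu \in \cP_p(\bR)$ and put $\hat\mu := \proj_{\cF}^{(p)}\mu \in \cF$. By \eqref{eqn:invariance}, $T_{\#}\hat\mu \in \cF$. Any $\nu' \in \cF$ can be written as $\nu' = T_{\#}\nu$ for some $\nu \in \cF$, again by \eqref{eqn:invariance}. For such $\nu'$ we get
\begin{equation*}
\cW_p(\nu', T_{\#}\mu) = |b|\,\cW_p(\nu,\mu) \ge |b|\,\cW_p(\hat\mu,\mu) = \cW_p(T_{\#}\hat\mu, T_{\#}\mu).
\end{equation*}
So $T_{\#}\hat\mu$ attains the infimum in \eqref{eqn:Wasserstein.projection} for the measure $T_{\#}\mu$. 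Uniqueness in Theorem \ref{thm:Wasserstein.projection} then yields \eqref{eqn:equivariance}.

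I do not expect a real obstacle here. The only delicate point is the case $b<0$, which the coupling argument handles uniformly. I also need to remember that \eqref{eqn:invariance} is an equality of sets, and I use both inclusions: $T_{\#}\cF \subset \cF$ to know that $T_{\#}\hat\mu$ is feasible, and $\cF \subset T_{\#}\cF$ to write every competitor as $T_{\#}\nu$ with $\nu \in \cF$.
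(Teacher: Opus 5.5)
Your proof is correct. Its overall structure matches the paper's: establish $\cW_p(T_{\#}\nu, T_{\#}\mu) = |b|\,\cW_p(\nu,\mu)$, then combine the invariance of $\cF$ with uniqueness of the projection.

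The difference lies in how you obtain the scaling identity.
\begin{itemize}
\item The paper writes down the quantile function of $T_{\#}\mu$, namely $a + bQ_\mu(u)$ for $b>0$ and $a + bQ_\mu(1-u)$ for $b<0$, and then applies the isometry of Proposition~\ref{prop:Wasserstein.quantile}.
\item You instead push couplings forward under $T\times T$. This map is a bijection $\Pi(\mu,\nu)\to\Pi(T_{\#}\mu,T_{\#}\nu)$ that multiplies the cost by $|b|^p$.
\end{itemize}

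Your coupling argument does not use the one-dimensional quantile representation, so the scaling identity carries over verbatim to similarity maps on $\bR^d$. As you anticipated, it also sidesteps a point the paper glosses over. For $b<0$, the expression $a+bQ_\mu(1-u)$ is right-continuous rather than left-continuous, so it agrees with $Q_{T_{\#}\mu}$ only off a countable set. For instance, with $\mu=\tfrac12\delta_0+\tfrac12\delta_1$ and $T(x)=-x$, the two differ at $u=1/2$. This is harmless inside the $L^p$ integral, but the paper does not say so.

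The paper's route is shorter, given that the isometry is already available, and it stays within the quantile framework used throughout.

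You also make the final step explicit, which the paper compresses into a single ``it follows that.'' You use $T_{\#}\cF\subset\cF$ to show that $T_{\#}\hat\mu$ is feasible, and $\cF\subset T_{\#}\cF$ to write every competitor as $T_{\#}\nu$ with $\nu\in\cF$.
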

\begin{proof}
For $\mu \in \cP(\bR)$ and $u \in [0, 1]$, we have
\begin{equation*}
\begin{split}
Q_{T_{\#}\mu}(u) = \left\{\begin{array}{ll}
        a + b Q_{\mu}(u), & \text{if } b > 0;\\
        a + b Q_{\mu}(1 - u), & \text{if } b < 0.\\
        \end{array}\right.
\end{split}
\end{equation*}
Note that when $b < 0$ the quantiles are flipped. Regardless of the sign of $b$, for $\mu \in \cP_p(\bR)$ and $\nu \in \cF$ we have
\begin{equation*}
\begin{split}
\cW_p^p(T_{\#}\nu, T_{\#}\mu) = \int_0^1 | Q_{T_{\#}\nu}(u) - Q_{T_{\#} \mu}(u)|^p \dd u = |b|^p \cW_p^p(\nu, \mu).
\end{split}
\end{equation*}
Hence $\cW_p(T_{\#}\nu, T_{\#} \mu) = |b| \cW_p(\nu, \mu)$. It follows that
\[
T_{\#} \nu = \proj_{\cF}^{(p)} T_{\#}\mu \Leftrightarrow \nu = \proj_{\cF}^{(p)} \mu.
\]
This gives \eqref{eqn:equivariance} and the proposition is proved.

\end{proof}

\begin{remark}[Stochastic dominance] \label{rmk:stochastic.dominance}
Let $\mu, \nu \in \cP(\bR)$. We say that $\nu$ has (first-order) stochastic dominance over $\mu$, written $\mu \preceq \nu$, if $Q_{\mu} \leq Q_{\nu}$. This is equivalent to the existence of a coupling $(X, Y)$ of $\mu$ and $\nu$ such that $X \leq Y$ almost surely. In particular, we can take the comonotonic coupling \eqref{eqn:monotone.coupling}.

It is natural to ask whether the Wasserstein projection $\proj_{\cF}^{(p)}$ is monotone with respect to stochastic dominance. That is, whether 
\[
\mu \preceq \nu \Rightarrow \proj_{\cF}^{(p)} \mu \preceq \proj_{\cF}^{(p)} \nu.
\]
It turns out that this property does not generally hold. In particular, it does not hold for monotone and log-concave density estimation. An example for the later is given in Example \ref{eg:log.concave.same.mean}.
\end{remark}

\subsection{Wasserstein projection estimator}
Let $X_1, \ldots, X_n$ be i.i.d.~samples from an unknown distribution $\mu^* \in \cP(\bR)$. We let $\mu_n := \sum_{i = 1}^n (1/n)\delta_{X_i}$ be the {\it empirical measure} regarded as a random element of $\cP(\bR)$. 

Let $\cF$ be a given subset of $\cP(\bR)$, regarded as a statistical model. The set $\cF$ can be parametric or nonparametric. For example, if $\cF$ is a location-scale family (Example \ref{eg:location.scale}), then $\dim \cF = 2$ and $\cF$ can be parametrized by the location and scale parameters. Here, we are primarily interested in the nonparametric case. We allow the model to be misspecified, so $\mu^*$ needs not be an element of $\cF$.

A popular approach to nonparametric density estimation is maximum likelihood estimation. In this paper, we study instead estimation by Wasserstein projection. That is, our estimator $\hat{\mu}_n$ is defined as the distribution in $\cF$ which is closest to the empirical distribution $\mu_n$ in terms of a Wasserstein distance. For this purpose, we fix $p \in (1, \infty)$ and assume that $\cF \subset \cP_p(\bR)$. Given an empirical distribution $\mu_n$, our estimation problem is
\begin{equation} \label{eqn:Wasserstein.estimation}
\inf_{\mu \in \cF} \cW_p(\mu, \mu_n).
\end{equation}
By Theorem \ref{thm:Wasserstein.projection}, the optimization problem \eqref{eqn:Wasserstein.estimation} has a unique minimizer whenever $\cF$ is displacement convex and closed in $(\cP_p(\bR), \cW_p)$. This motivates the following definition.

\begin{definition}[Wasserstein projection estimator] \label{def:Wasserstein.density.estimator}
Let $p \in (1, \infty)$. Suppose $\cF$ is a displacement convex and closed subset of $\cP_p(\bR)$ with respect to $\cW_p$. Given an empirical distribution $\mu_n = \sum_{i = 1}^n (1/n) \delta_{X_i}$, we call
\begin{equation} \label{eqn:Wasserstein.density.estimator}
\hat{\mu}_n := \proj_{\cF}^{(p)} \mu_n
\end{equation}
the ($p$-)Wasserstein projection estimator with respect to $\cF$.
\end{definition}

Recall that we write $\proj_{\cF}^{(2)} = \proj_{\cF}$.

\begin{theorem}[Consistency with respect to $\cW_2$] \label{thm:consistency}
Let $p = 2$ and suppose that $\mu^* \in \cP_2(\bR)$. Let $\mu_n = \sum_{i = 1}^n (1/n) \delta_{X_i}$ where $X_1, X_2, \ldots$ are i.i.d.~samples from $\mu^*$. Then
\begin{equation} \label{eqn:Wasserstein.estimator.bound.1}
\cW_2(\hat{\mu}_n, \proj_{\cF} \mu^*) \leq \cW_2(\mu_n, \mu^*).
\end{equation}
In particular, we have
\begin{equation} \label{eqn:Wasserstein.a.s.convergence}
\lim_{n \rightarrow \infty} \cW_2 (\hat{\mu}_n, \proj_{\cF}\mu^*) = 0 \quad \text{almost surely}.
\end{equation}
\end{theorem}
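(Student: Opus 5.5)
The inequality \eqref{eqn:Wasserstein.estimator.bound.1} follows directly from the Lipschitz property in Theorem \ref{thm:Wasserstein.projection}. By Definition \ref{def:Wasserstein.density.estimator}, $\hat{\mu}_n = \proj_{\cF} \mu_n$. Note that $\mu_n \in \cP_2(\bR)$ always, since it is a finitely supported measure, and $\mu^* \in \cP_2(\bR)$ by assumption. Applying \eqref{eqn:Lipschitz} with $\mu = \mu_n$ and $\nu = \mu^*$ gives
\[
\cW_2(\proj_{\cF}\mu_n, \proj_{\cF}\mu^*) \leq \cW_2(\mu_n, \mu^*),
\]
which is \eqref{eqn:Wasserstein.estimator.bound.1}. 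Here we implicitly use that $\cF$ satisfies the standing assumptions: it is nonempty, displacement convex and $\cW_2$-closed, so that $\proj_{\cF}$ is well defined.

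For \eqref{eqn:Wasserstein.a.s.convergence}, it suffices to show that $\cW_2(\mu_n, \mu^*) \to 0$ almost surely. I would use the standard characterization that convergence in $\cW_2$ is equivalent to weak convergence together with convergence of second moments (e.g.\ \cite{V08}). Weak convergence of $\mu_n$ to $\mu^*$ almost surely is Varadarajan's theorem. On $\bR$ it also follows directly from the Glivenko--Cantelli theorem, since the empirical CDF converges uniformly almost surely. Convergence of second moments, $\frac{1}{n}\sum_i X_i^2 \to \int x^2 \dd\mu^*$ almost surely, is the strong law of large numbers, which applies because $\mu^* \in \cP_2(\bR)$. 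Intersecting these two probability-one events gives $\cW_2(\mu_n,\mu^*)\to 0$ almost surely.

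As a more self-contained alternative using Proposition \ref{prop:Wasserstein.quantile}, one can write $\cW_2(\mu_n,\mu^*) = \|Q_{\mu_n} - Q_{\mu^*}\|_2$. Glivenko--Cantelli gives $Q_{\mu_n}(u) \to Q_{\mu^*}(u)$ at every continuity point $u$ of $Q_{\mu^*}$, hence Lebesgue-almost everywhere. Moreover $\|Q_{\mu_n}\|_2^2 = \frac1n\sum_i X_i^2 \to \|Q_{\mu^*}\|_2^2$ by the SLLN. Almost-everywhere convergence together with convergence of $L^2$ norms yields $L^2$ convergence, by the Riesz--Scheffé / Brezis--Lieb argument.

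The only step requiring care is this almost sure convergence of the empirical measure in $\cW_2$. In particular, one must control the tails, which is exactly where the second-moment assumption and the SLLN enter. Everything else is immediate from Theorem \ref{thm:Wasserstein.projection}.
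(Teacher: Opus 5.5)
Your proposal is correct and follows the same route as the paper: \eqref{eqn:Wasserstein.estimator.bound.1} comes from the $1$-Lipschitz property \eqref{eqn:Lipschitz}, and \eqref{eqn:Wasserstein.a.s.convergence} comes from the almost sure convergence $\cW_2(\mu_n, \mu^*) \to 0$. The only difference is that the paper just cites \cite[Section 5.1.2]{RF18} for that convergence, whereas you prove it. Both of your arguments for it are sound: weak convergence plus convergence of second moments via the SLLN, or almost everywhere convergence of quantile functions plus convergence of $L^2$ norms.
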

\begin{proof}
The inequality \eqref{eqn:Wasserstein.estimator.bound.1} is an immediate consequence of the Lipschitz property (which holds when $p = 2$) in Theorem  \ref{thm:Wasserstein.projection}.

Since $\mu^* \in \cP_2(\bR)$ and $\mu_n$ is the empirical measure of the first $n$ samples, it is well known that $\cW_2(\mu_n, \mu^*) \rightarrow 0$ almost surely as $n \rightarrow \infty$; for a proof see \cite[Section 5.1.2]{RF18}. Now \eqref{eqn:Wasserstein.a.s.convergence} follows immediately from the bound \eqref{eqn:Wasserstein.estimator.bound.1}.
\end{proof}

From \eqref{eqn:Wasserstein.estimator.bound.1},  finite sample performance of the Wasserstein estimator can be deduced in terms of the $2$-Wasserstein distance between the empirical distribution $\mu_n$ and the true distribution $\mu^*$. Many results are available in the literature, and the convergence rates depend the assumed properties of $\mu^*$. We quote here two results, taken from \cite{BL19, fournier2015rate}, showing that a wide range of behaviors is possible. Bounds specific to log-concave distributions will be given in Proposition \ref{prop:log.concave.bounds} below. Convergence properties of the {\it density} of $\hat{\mu}_n$ (when exists) are beyond the scope of this paper, and is left for future research.

\begin{proposition} \label{prop:rate1} { \ } 
\begin{enumerate}
\item[(i)] Suppose $\mu^* \in \cP_q(\bR)$ for some $q > 2$. Then there exists a universal constant $C > 0$ such that
\[
\bE [ \cW_2^2(\mu_n, \mu^*) ] \leq C \left( \int |x|^q \dd \mu^*(x) \right)^{2/q} \left( n^{-\frac{1}{2}} + n^{-(1 - \frac{2}{q})} \right).
\]
\item[(ii)] Let $F$ be the distribution function of $\mu^*$ and $f$ be the density of the absolutely continuous component of $\mu^*$. If
\[
J_2 := \int_{-\infty}^{\infty} \frac{F(x) (1-F(x))}{f
(x)} \dd x < \infty,
\]
then
\[
\bE[\cW_2^2(\mu_n,\mu^*)] \leq \frac{2}{n+1} J_2.
\]
\end{enumerate}
\end{proposition}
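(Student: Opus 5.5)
These bounds are not new; the plan is to import them from \cite{fournier2015rate} for (i) and \cite{BL19} for (ii), and to indicate the one-dimensional mechanism behind each. Both parts start from Proposition \ref{prop:Wasserstein.quantile}. This lets us write $\cW_2^2(\mu_n,\mu^*)$ either in terms of the empirical quantile function $Q_{\mu_n}$, or, after a change of variables, in terms of the distribution functions $F_n$ and $F$.

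For (i), the first step is the standard one-dimensional comparison
\[
\cW_2^2(\mu,\nu)\le C\int_{\bR}|x|\,|F_\mu(x)-F_\nu(x)|\,\dd x .
\]
To obtain it, write $\cW_2^2=\int_0^1 |Q_\mu-Q_\nu|^2\,\dd u$. Express $|a-b|^2$ as an integral of $2|x-b|$ over $x$ between $a$ and $b$, bound $|x-b|\le 2\max(|x|,|b|)$, and swap the order of integration (Fubini) to land on the horizontal integral of $|F_\mu-F_\nu|$. Apply this with $\mu=\mu_n$, $\nu=\mu^*$ and take expectations. Since $nF_n(x)\sim \mathrm{Bin}(n,F(x))$, we have
\[
\bE|F_n(x)-F(x)|\le \min\bigl(2T(x),\,\sqrt{T(x)/n}\bigr),\qquad T(x):=\min(F(x),1-F(x))\le \mu^*(|X|\ge |x|).
\]
By the scaling in Proposition \ref{prop:equivariance}, both sides scale the same way, so we may normalize $M_q:=\int|x|^q\,\dd\mu^*=1$. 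Markov's inequality then gives $T(x)\le |x|^{-q}$.

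Next, split the $x$-integral at a level $R$. On $\{|x|\le R\}$, use the $\sqrt{T/n}$ bound; on $\{|x|>R\}$, use the $2T$ bound. Optimizing over $R$ produces the two terms $n^{-1/2}$ and $n^{-(1-2/q)}$, with the case $q$ near $2$ driving the second one. I expect this bookkeeping to be the main obstacle. In particular, the regime $|x|\le 1$ needs $T\le 1$ rather than the Markov bound. Restoring $M_q$ by scaling then gives the factor $M_q^{2/q}$.

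For (ii), realize the sample as $X_i=F^{-1}(U_i)$ with $U_i$ i.i.d.\ uniform. Then
\[
\cW_2^2(\mu_n,\mu^*)=\sum_{i=1}^n\int_{(i-1)/n}^{i/n}\bigl(F^{-1}(U_{(i)})-F^{-1}(u)\bigr)^2\,\dd u .
\]
Bound increments of $F^{-1}$ by $|F^{-1}(a)-F^{-1}(b)|\le\bigl|\int_b^a \dd t/f(F^{-1}(t))\bigr|$; the singular part of $\mu^*$ can only shorten quantile gaps, and this reduction should be checked carefully.

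Then apply Cauchy--Schwarz with the weight $t(1-t)$:
\[
\Bigl(\int_b^a g\Bigr)^2\le \int_b^a g^2\,t(1-t)\,\dd t\cdot\int_b^a\frac{\dd t}{t(1-t)} .
\]
Average over $i$ and take expectations using the explicit Beta laws of $U_{(i)}$; the goal is to reach
\[
\frac{2}{n+1}\int_0^1\frac{t(1-t)}{f(F^{-1}(t))^2}\,\dd t .
\]
The substitution $t=F(x)$ turns this into $\frac{2}{n+1}J_2$. The delicate step is obtaining the exact constant $2/(n+1)$ from the Beta moments, which is where I would follow \cite{BL19} closely.
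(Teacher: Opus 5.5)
Your plan is to import (i) from \cite[Theorem 1]{fournier2015rate} and (ii) from \cite[Theorem 5.1]{BL19}. That is exactly what the paper does; its proof consists of these two citations. For (ii) your sketch is fine as a pointer, since the constant $2/(n+1)$ comes from \cite{BL19} in any case.

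The gap is in (i). You assert that splitting at $R$ and optimizing yields exactly $n^{-1/2}+n^{-(1-2/q)}$ with a universal constant. It does not, and the difficulty sits near $q=4$, not near $q=2$; the second term dominates for every $q<4$. Normalize $M_q:=\int|x|^q\,\dd\mu^*=1$. The middle range then contributes $n^{-1/2}\int_{1<|x|\le R}|x|^{1-q/2}\,\dd x$. This equals $2n^{-1/2}(R^{2-q/2}-1)/(2-q/2)$ for $q<4$ and $2n^{-1/2}\log R$ for $q=4$, and is at most $2n^{-1/2}/(q/2-2)$ for $q>4$. So the constant degenerates as $q\to4$. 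At $q=4$ the optimal $R\asymp n^{1/4}$ gives order $n^{-1/2}\log n$ rather than $n^{-1/2}$. The factor $1/(q-2)$ that pointwise Markov produces in the tail is avoidable, because $\int_{|x|>R}|x|\,\mu^*(|X|\ge|x|)\,\dd x\le\bE[X^2 1_{\{|X|>R\}}]\le R^{2-q}M_q$.

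This is not an artifact of your method: (i) is false as written. Let $\mu^*$ have atoms at $0$ and at $2^j$, $j\ge1$, with $\mu^*((2^j,\infty))=c\,2^{-4j}j^{-2}$; then $M_4<\infty$.
\begin{itemize}
\item For measures on this grid, $(\sum_j g_j)^2\ge\sum_j g_j^2$ gives $\cW_2^2(\mu_n,\mu^*)\ge\sum_{j\ge1}4^j|F_n(2^j)-F(2^j)|$.
\item The binomial bound $\bE|F_n(x)-F(x)|\ge c'\sqrt{T(x)/n}$ holds whenever $nT(x)\ge1$.
\item Together these give $\bE\cW_2^2(\mu_n,\mu^*)\gtrsim n^{-1/2}\sum_{j\lesssim\log n}j^{-1}\asymp n^{-1/2}\log\log n$. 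This contradicts (i) at $q=4$.
\item Replacing $2^{-4j}$ by $2^{-qj}$ shows that $C$ must grow at least like $\log(1/|q-4|)$.
\end{itemize}
For $d=1$, $p=2$, \cite[Theorem 1]{fournier2015rate} actually gives a constant depending on $q$, under the restriction $q\neq 2p=4$. The paper's citation has the same defect, so the remedy is to amend the statement, not to sharpen the argument.

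A smaller point concerns your comparison inequality. The bound $|x-b|\le2\max(|x|,|b|)$ leaves a $|b|$ that cannot be moved into the $x$-integral. Instead, when $a,b$ have the same sign, integrate from the endpoint of smaller modulus, so the integrand is at most $2|x|$. When they have opposite signs, use $(a-b)^2\le2a^2+2b^2$. Together these give $\cW_2^2(\mu,\nu)\le4\int|x|\,|F_\mu-F_\nu|\,\dd x$.
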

\begin{proof}
See \cite[Theorem 1]{fournier2015rate} and \cite[Theorem 5.1]{BL19}.
\end{proof}

\begin{remark}
If it is known a priori that $\mu^*$ has a density which is smooth, a possible improvement is to start with the {\it wavelet estimator} in \cite{NB22} and then project its value onto $\cF$. In this case, structural properties of the estimator (in the spirit of Theorem \ref{thm:monotone.characterization} and Theorem \ref{thm:log.concave.characterization}) are more difficult to establish. Nevertheless, they may still be numerically computed, possibly after suitable discretization.
\end{remark}

\section{Monotone density estimation} \label{sec:monotone}

In this section, we study monotone density estimation under Wasserstein projection. Our main result is Theorem \ref{thm:monotone.characterization}, which shows that the estimated density is piecewise constant. We begin by formulating an appropriate displacement convex set for the monotonicity constraint. 

\begin{lemma} \label{lem:monotone.characterization}
Let $\mu \in \cP(\bR_+)$. Then $\mu$ has a density which is (a.e.) non-increasing on $\bR_+$ if and only if its quantile function $Q_{\mu}: (0, 1) \rightarrow \bR_+$ satisfies the following properties:
\begin{itemize}
\item[(i)] $Q_{\mu}$ is convex.
\item[(ii)] $Q_{\mu}$ is strictly increasing.
\item[(iii)] $\lim_{u \downarrow 0} Q_{\mu}(u) = 0$.
\end{itemize}
\end{lemma}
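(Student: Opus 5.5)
The plan is to work with the distribution function $F(x) = \mu([0,x])$ and to use that $Q_\mu$ is its generalized inverse. The whole proof rests on one principle: a non-decreasing function is convex exactly when its generalized inverse is concave. I will prove the two directions separately.

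($\Rightarrow$) Suppose $\mu$ has a density $f$ that is non-increasing on $\bR_+$.

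1. First I pin down the support of $f$. Let $b := \sup\{x : f(x) > 0\} \in (0,\infty]$. Since $f$ is non-increasing, $f > 0$ on $[0,b)$ and $f = 0$ on $(b,\infty)$.
2. On $[0,b)$, the distribution function $F(x) = \int_0^x f$ is therefore continuous, strictly increasing and concave, because its derivative $f$ is non-increasing. Also $F(0) = 0$, and $F(x) \uparrow 1$ as $x \uparrow b$.
3. Consequently $Q_\mu$ restricted to $(0,1)$ is the genuine inverse of $F|_{[0,b)}$. It is strictly increasing and continuous, which gives (ii), and $Q_\mu(u) \to 0$ as $u \downarrow 0$, which gives (iii).
4. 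For (i), I use the fact that the inverse of an increasing concave function is convex. To verify it, take $u = F(x)$ and $v = F(y)$. Applying $F$ to the point $\lambda x + (1-\lambda)y$, concavity gives $F(\lambda x + (1-\lambda) y) \ge \lambda u + (1-\lambda) v$. Since $Q_\mu$ is increasing and inverts $F$, this yields $Q_\mu(\lambda u + (1-\lambda)v) \le \lambda Q_\mu(u) + (1-\lambda) Q_\mu(v)$.

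($\Leftarrow$) Suppose $Q := Q_\mu$ satisfies (i)–(iii).

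1. A convex function on $(0,1)$ is continuous and locally Lipschitz. It has a non-decreasing right derivative $Q'_+$, and $Q(v) - Q(u) = \int_u^v Q'_+$.
2. Because $Q$ is strictly increasing and convex, $Q'_+ > 0$ everywhere. Indeed, if $Q'_+(u_0) = 0$ at some $u_0$, then monotonicity of $Q'_+$ forces $Q'_+ = 0$ on $(0,u_0]$, so $Q$ would be constant there, contradicting strict monotonicity.
3. Set $b := Q(1^-) \in (0,\infty]$. Since $Q$ is a continuous strict bijection from $(0,1)$ onto $(0,b)$, using (iii) for the left endpoint, the distribution function is $F = Q^{-1}$ on $(0,b)$. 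It is continuous with $F(0) = 0$, so $\mu$ has no atoms.
4. By the same inverse principle, $F$ is concave on $[0,b)$ and equals $1$ on $[b,\infty)$. A concave function is absolutely continuous on compact subintervals of the interior. At $0$, $F$ is continuous and monotone, so $F(x) = \int_0^x F'_+$ with $F'_+$ non-increasing.
5. Therefore $f := F'_+ \mathbf 1_{[0,b)}$ is a density for $\mu$. It equals $1/Q'_+(F(x))$ on $[0,b)$, which is non-increasing, and it is $0$ beyond $b$. Hence $f$ is non-increasing on all of $\bR_+$.

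The main point needing care is absolute continuity. The inverse of a strictly increasing convex function need not be absolutely continuous if its derivative vanishes, because the inverse then acquires infinite slope. The bound $Q'_+ > 0$ from step 2 rules this out in the interior. Near $0$, the slope of $F$ may be infinite, corresponding to an unbounded density. That is acceptable, since the integral representation in step 4 only needs $F$ concave and continuous at $0$.

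There is also a subtlety in the forward direction. The density is only a.e.\ non-increasing, so I first replace $f$ by its right-continuous non-increasing version. This changes $f$ only on a null set, so it does not change $\mu$.
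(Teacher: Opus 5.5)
Your proof is correct and follows essentially the same route as the paper: identify $F_{\mu}$ with $Q_{\mu}^{-1}$ on $(0,b)$, pass between concavity of $F_{\mu}$ and convexity of $Q_{\mu}$ in each direction, and then get absolute continuity and the non-increasing density $1/Q_{\mu}'(F_{\mu}(x))$ from concavity of $F_{\mu}$. The only difference is that you prove the convex--concave duality with a direct chord inequality, whereas the paper uses derivatives: the integral formula $Q_{\mu}(v)-Q_{\mu}(u)=\int_u^v \dd s/f_{\mu}(Q_{\mu}(s))$ in one direction, and one-sided derivatives of the inverse in the other. Your version is therefore slightly more self-contained, and your step 2 is not actually needed, since concavity of the finite function $F_{\mu}$ alone gives local absolute continuity.
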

\begin{proof}
The proof requires a bit of work since we do not impose any regularity condition, other than monotonicity, on the density.\\
($\Rightarrow$) Suppose $\mu$ has a density $f_{\mu}$ which is non-increasing on $\bR_+$. Then, $\mu$ is necessarily concentrated on an open interval $I = (0, b)$, where $0 < b \leq \infty$, such that $f_{\mu} > 0$ on $I$. By \cite[Proposition A.18]{BL19}, the quantile function $Q_{\mu}$ of $\mu$ is strictly increasing and absolutely continuous on $(0, 1)$. Moreover, the chordal slope of $Q_{\mu}$ is given by
\begin{equation*} 
\frac{Q_{\mu}(v) - Q_{\mu}(u)}{v - u} = \frac{1}{v - u} \int_u^v \frac{1}{f_{\mu}(Q_{\mu}(s))} \dd s, \quad 0 < u < v < 1.
\end{equation*}
Since $1/(f_{\mu} \circ Q_{\mu})$ is non-decreasing, we have that $Q_{\mu}$ is convex on $(0, 1)$. Also, since $Q_{\mu}$ is an increasing bijection from $(0, 1)$ onto $(0, b)$, we have $\lim_{u \downarrow 0} Q_{\mu}(u) = 0$.

($\Leftarrow$) Suppose that $Q_{\mu}$ satisfies (i)--(iii). Then $Q_{\mu}$ is an increasing homeomorphism from $(0, 1)$ onto $(0, b)$, where $b  = \lim_{u \uparrow 1} Q_{\mu}(u)$. Note that $\mu$ is concentrated on $(0, b)$. It follows that $F_{\mu} = Q_{\mu}^{-1}$ is continuous and strictly increasing on $(0, b)$. 

Since $Q_{\mu}$ is convex on $(0, 1)$, for any $u \in (0, 1)$ the left and right derivatives $Q_{\mu}'(u-)$ and $Q_{\mu}'(u+)$ exist and coincide at all except countably many points. Moreover, both one-sided derivatives are positive since  $Q_{\mu}$ is convex and strictly increasing. We will show that the one-sided derivatives of the distribution $F_{\mu} = Q_{\mu}^{-1}$ exist everywhere on $(0, b)$. Let $u \in (0, 1)$ be arbitrary, and let $x = Q_{\mu}(u) \in (0, b)$. Then, we have
\begin{equation} \label{eqn:derivative.definition}
\lim_{h \downarrow 0} \frac{Q_{\mu}(u + h) - Q_{\mu}(u)}{h} = Q_{\mu}'(u+) > 0.
\end{equation}
For $h > 0$ such that $u + h \in (0, 1)$, let $\tilde{h} = \tilde{h}(h) = Q_{\mu}(u + h) - Q_{\mu}(u)$. Since $Q_{\mu}: (0, 1) \rightarrow (0, b)$ is an increasing homeomorphism, we have $h \downarrow 0$ if and only if $\tilde{h} \downarrow 0$. Since $F_{\mu} = Q_{\mu}^{-1}$, We may rewrite \eqref{eqn:derivative.definition} to get
\[
\lim_{\tilde{h} \downarrow 0} \frac{\tilde{h}}{F_{\mu}(x + \tilde{h}) - F_{\mu}(x+)} = Q_{\mu}'(u+) \Rightarrow F_{\mu}'(x+) = \frac{1}{Q_{\mu}'( F_{\mu}(x) +)}, \quad x \in (0, b).
\]
Thus $F_{\mu}$ is right differentiable at $x$. Similarly, we have
\[
F_{\mu}'(x-) = \frac{1}{Q_{\mu}'( F_{\mu}(x) -)}, \quad x \in (0, b).
\]
It follows from the convexity of $Q_{\mu}$ that $F_{\mu}$ is concave on $(0, b)$. Hence, $F_{\mu}$ is absolutely continuous and its derivative (density) is equal to
\begin{equation} \label{eqn:monotone.density}
f_{\mu}(x) = \frac{1}{Q_{\mu}'(F_{\mu}(x))},
\end{equation}
for all but countably many points on $(0, b)$. Hence $\mu$ has a density which is (a.e.) non-increasing.
\end{proof}

For $p > 1$, let $\mathring{\cF}_{m, p} \subset \cP_p(\bR_+)$ be the set of $\mu \in \cP_p(\bR_+)$ which has a non-increasing density on $\bR_+$. From Lemma \ref{lem:monotone.characterization}, $\mu \in \mathring{\cF}_{m, p}$ if and only if $Q_{\mu} \in \cQ_p$ and satisfies properties (i)--(iii) in Lemma \ref{lem:monotone.characterization}. Clearly, each of properties (i)--(iii) imposes a convex constraint on $Q_{\mu}$. It follows that $\mathring{\cF}_{m, p}$ is displacement convex in the sense of Definition \ref{def:displacement.convex}. To apply Theorem \ref{thm:Wasserstein.projection}, we require that the model is both displacement convex and closed in $(\cP_p(\bR_+), \cW_p)$. Since $\mathring{\cF}_{m, p}$ is not $\cW_p$-closed (see Example \ref{eg:relative.complement} below), for technical purposes we will use instead the $\cW_p$-closure of $\mathring{\cF}_{m, p}$ (also see Proposition \ref{prop:monotone.properties}  below).

\begin{proposition} \label{prop:F.m.p}
For $p > 1$, let $\cF_{m, p}$ be the set of $\mu \in \cP_p(\bR_+)$ whose quantile function satisfies the following properties:
\begin{itemize}
\item[(i)] $Q_{\mu}$ is convex.
\item[(ii)] $Q_{\mu}$ is non-decreasing.\footnote{Since $Q_{\mu}$ is a quantile function, it is non-decreasing by construction. We state the monotonicity condition explicitly to contrast it with property (ii) in Lemma \ref{lem:monotone.characterization}.}
\item[(iii)] $\lim_{u \downarrow 0} Q_{\mu}(u) = 0$.\end{itemize}
Then, $\cF_{m,p}$ is the closure of $\mathring{\cF}_{m, p}$ in $(\cP_p(\bR_+), \cW_p)$ and is displacement convex. 
\end{proposition}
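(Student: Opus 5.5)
Working throughout in the space of quantile functions, the proof has three steps: displacement convexity, then the closure inclusion $\overline{\mathring{\cF}_{m,p}} \subset \cF_{m,p}$, then the reverse inclusion. By Proposition \ref{prop:Wasserstein.quantile}, $\mu \mapsto Q_\mu$ is an isometry from $(\cP_p(\bR_+),\cW_p)$ onto $\cQ_p$ with the $L^p$ norm. It therefore suffices to show that $\cQ_{\cF_{m,p}}$ is convex, that it is $L^p$-closed, and that $\cQ_{\mathring{\cF}_{m,p}}$ is $L^p$-dense in it.

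\emph{Convexity.} Properties (i)--(iii) are each preserved under convex combinations, and so are nonnegativity of $Q$ and membership in $L^p$. Hence $\cQ_{\cF_{m,p}}$ is convex, and $\cF_{m,p}$ is displacement convex by Definition \ref{def:displacement.convex}.

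\emph{Closedness.} Take $Q_k \in \cQ_{\cF_{m,p}}$ with $Q_k \to Q$ in $L^p$, where $Q = Q_\mu \in \cQ_p$. Pass to a subsequence converging a.e. Then $Q$ agrees a.e. with a pointwise limit of convex, non-decreasing, nonnegative functions. Such a limit is convex on a full-measure set, and since $Q$ is left-continuous and monotone, $Q$ itself is convex on $(0,1)$ and nonnegative.

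The delicate point is (iii), because a pointwise limit need not keep $Q_k(0+)=0$. Write $c = \lim_{u\downarrow 0} Q(u) \ge 0$. I plan to use convexity together with $Q_k(0+)=0$: for $u \le 1/2$ we have $Q_k(u) \le 2u\,Q_k(1/2)$. The values $Q_k(1/2)$ stay bounded. Indeed, $Q_k \ge Q_k(1/2)$ on $[1/2,1)$, so $Q_k(1/2) \le 2^{1/p}\|Q_k\|_p$, and the norms are bounded. Passing to the a.e. limit gives $Q(u) \le C u$ for a.e. small $u$, hence for all small $u$ by monotonicity, and so $c=0$. This is the step I expect to need the most care, since it is where the convexity-based uniform control near $0$ is essential.

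\emph{Density.} Given $Q \in \cQ_{\cF_{m,p}}$, set $Q_\epsilon(u) := Q(u) + \epsilon u$. This function is convex, strictly increasing, tends to $0$ as $u\downarrow 0$, and lies in $L^p$. By Lemma \ref{lem:monotone.characterization} it belongs to $\cQ_{\mathring{\cF}_{m,p}}$, and $\|Q_\epsilon - Q\|_p \le \epsilon \to 0$.

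One small check remains: $Q_\epsilon$ must be left-continuous, so that it is a genuine quantile function. This holds because a finite convex function is continuous on $(0,1)$. It also ensures that $\mathring{\cF}_{m,p} \subset \cF_{m,p}$ via Lemma \ref{lem:monotone.characterization}. Together with closedness, this gives $\overline{\mathring{\cF}_{m,p}} = \cF_{m,p}$.
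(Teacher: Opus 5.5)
Your proof is correct, and it differs from the paper's in the closedness step and in how displacement convexity is obtained; the density step ($Q+\epsilon u$) is the same as the paper's.

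\textbf{Closedness.} The paper extends each $Q_k$ by $0$ on $(-\infty,0]$, making it a convex function on the open set $(-\infty,1)$. It then cites Rockafellar's Theorem 10.8: a.e.\ convergence of convex functions upgrades to pointwise convergence everywhere to a finite convex limit. That one citation gives convexity of the limit, the value $0$ at $u=0$, and continuity at the now-interior point $0$, hence (iii).

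You instead get (iii) from an elementary uniform envelope,
$Q_k(u)\le 2u\,Q_k(1/2)\le 2^{1+1/p}\sup_k\|Q_k\|_p\,u$ on $(0,1/2]$.
This bound passes to the a.e.\ limit and then to all small $u$ by monotonicity. It is self-contained and quantitative, but it leaves convexity of $Q$ to be shown separately.

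That separate step needs one more line than you give it. Convexity of the pointwise limit holds only for triples lying in the full-measure set $E$ where convergence holds. To reach arbitrary $x<y$ in $(0,1)$, approximate them from the left by $x_j,y_j$ such that $x_j$, $y_j$ and $\lambda x_j+(1-\lambda)y_j$ all lie in $E$. This is possible because the exceptional pairs form a planar null set. Then apply left-continuity of $Q$. Alternatively, cite Theorem 10.8 as the paper does.

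\textbf{Displacement convexity.} The paper deduces it from $\cQ_{\cF_{m,p}}$ being the $L^p$-closure of the convex set $\cQ_{\mathring{\cF}_{m,p}}$. You check directly that (i)--(iii), nonnegativity and $L^p$-membership are preserved under convex combinations. Your version is more direct and does not depend on first proving the closure identity.
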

\begin{proof}

We first show that each $\mu \in \cF_{m, p}$ is the $\cW_p$-limit of a sequence in $\mathring{\cF}_{m, p}$. Let $\mu \in \cF_{m, p}$. For each $n \geq 1$, let $\mu_n$ be such that
\[
Q_{\mu_n}(u) = Q_{\mu}(u) + \frac{1}{n}u, \quad u \in (0, 1).
\]
Since $u \mapsto \frac{1}{n}u$ is convex, bounded and strictly increasing, $\mu_n$ is an element of $\mathring{\cF}_{m, p}$. Moreover, by Proposition \ref{prop:Wasserstein.quantile}, we have
\[
\cW_p(\mu_n, \mu)^p = \int_0^1 |Q_{\mu_n}(u) - Q_{\mu}(u)|^p \dd u = \frac{1}{(p + 1)n^p} \rightarrow 0, \quad n \rightarrow \infty.
\]

Next, we show that $\cF_{m, p}$ is closed in $(\cP_p(\bR_+), \cW_p)$. By Proposition \ref{prop:Wasserstein.quantile} again, it suffices to show that $\cQ_{\cF_{m, p}}$ is closed in $L^p([0, 1])$. Suppose $(Q_n)_{n \geq 1}$ is a sequence in $\cQ_{\cF_{m, p}}$ converging to some $Q$ in $L^p([0, 1])$. Since $Q_n$ is non-decreasing and convex with $\lim_{u \downarrow 0} Q_n(u) = 0$, we may extend it to a non-decreasing convex function on $\bR$ by letting $Q_n(u) = 0$ for $u \leq 0$. Since $Q_n \rightarrow Q$ in $L^p([0, 1])$, there exists a subsequence $Q_{n'}$ along which $Q_{n'} \rightarrow Q$ almost everywhere on $[0, 1]$ (and hence on $(-\infty, 1)$. By \cite[Theorem 10.8]{R70}, $Q_n$ converges pointwise on $\bR$ to a convex function $\tilde{Q}$ which is  equal to $Q$ on $(0, 1)$. Clearly, $\tilde{Q}$ is non-decreasing. Also, we have $\tilde{Q}(0) = \lim_{n' \rightarrow \infty} Q_{n'}(0) = 0$. Furthermore, since $\tilde{Q}$ is continuous at $0$ by \cite[Theorem 10.1]{R70}, we have $\tilde{Q}(0) = \lim_{u \rightarrow 0^+} \tilde{Q}(u)$. Since $Q = \tilde{Q}$ on $(0, \infty)$, we have $Q \in \cQ_{\cF_{m, p}}$.

Finally, we observe that $\cF_{m, p}$ is displacement convex since $\cQ_{\cF_{m, p}}$, being the closure of the convex set $\cQ_{\mathring{\cF}_{m, p}}$ in $L^p([0, 1])$, is convex.
\end{proof}

By Theorem \ref{thm:Wasserstein.projection}, the Wasserstein projection estimator $\hat{\mu}_n = \proj_{\cF_{m, p}}^{(p)} \mu_n$ is well-defined. 
It is not difficult to show that the relative complement $\cF_{m, p} \setminus \mathring{\cF}_{m, p}$ consists of probability distributions $\mu \in \cP_p(\bR_+)$ of the form
\begin{equation} \label{eqn:monotone.mu.decomposition}
\mu = \mu_0 + \mu_1,
\end{equation}
where $\mu_0 = \mu(\{0\}) \delta_0$ has mass $\mu(\{0\}) \in [0, 1]$ at $0$, and $\mu_1$ is an absolutely continuous subprobability measure (this means $\mu_1(\bR_+) = 1 - \mu(\{0\}) \leq 1$) whose density is non-increasing on $\bR_+$ and has finite $p$-th moment. In this case $Q_{\mu}(u) = 0$ for $u \in (0, \mu(\{0\})]$ and $Q_{\mu}$ is strictly increasing on $[\mu(\{0\}), 1)$. By Proposition \ref{prop:monotone.properties} below, when the data points take values in $(0, \infty)$, the Wasserstein projection estimator takes values in $\mathring{\cF}_{m, p}$, and so has a density. 

\begin{example}[An element of $\cF_{m, p} \setminus \mathring{\cF}_{m, p}$] \label{eg:relative.complement}
Consider the probability distribution
\[
\dd \mu(x) = \underbrace{\alpha \dd \delta_0}_\text{$\dd \mu_0(x)$} + \underbrace{(1 - \alpha) I_{(0, 1)} \dd x}_\text{$\dd \mu_1(x)$},
\]
where $\alpha \in (0, 1)$. Note that
\[
Q_{\mu}(u) = \frac{1}{1 - \alpha} \max\{u - \alpha, 0\}, \quad u \in (0, 1).
\]
So $\mu \in \cF_{m, p}$. For each $n \geq 1$, let $\mu_n$ be the distribution on $\bR_+$ with density
\[
f_n(x) = 
\left\{\begin{array}{ll}
\alpha n, & \text{if } 0 < x < \frac{1}{n};\\
1 - \alpha, & \text{if } \frac{1}{n} < x < 1 + \frac{1}{n};\\
0, & \text{if } x > 1 + \frac{1}{n}.
\end{array}\right.
\]
Clearly $\mu_n \in \mathring{\cF}_{m, p}$ for $n \geq \frac{1 - \alpha}{\alpha}$. It is easy to verify that for any $p$ we have $\cW_p(\mu_n, \mu) \rightarrow 0$ as $n \rightarrow \infty$.
\end{example}

\begin{proposition}\label{prop:monotone.properties} 
Let $p > 1$, $\mu \in \cP_{p}(\bR_+)$ and $\hat{\mu} = \proj_{\cF_{m, p}}^{(p)}\mu$. 
\begin{itemize}
\item[(i)] If $\mu$ is supported away from zero, that is, $\mu([0, \epsilon]) = 0$ for some $\epsilon > 0$, then $\hat{\mu}$ is absolutely continuous and so is an element of $\mathring{\cF}_{m, p}$.
\item[(ii)] If $\mu$ is compactly supported, then so is $\hat{\mu}$.
\end{itemize}
In particular, if $X_i$ are i.i.d.~samples from an absolutely continuous distribution $\mu^* \in \cP_p(\bR)$, the Wasserstein density estimator $\hat{\mu}_n = \proj_{\cF_{m, p}} \mu_n$, where $\mu_n = \sum_{i = 1}^n (1/n) \delta_{X_i}$, is almost surely compactly supported and absolutely continuous. 
\end{proposition}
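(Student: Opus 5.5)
Both parts can be handled by perturbing $\hat Q := Q_{\hat\mu}$ inside $\cQ_{\cF_{m,p}}$ and using the optimality of $\hat{\mu}$ in the $L^p$ problem \eqref{eqn:Lp.projection}. Write $Q := Q_\mu$ and $J(R) := \int_0^1 |R - Q|^p \dd u$, which is strictly convex.

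\emph{Part (i).} Suppose $Q \geq \epsilon$ on $(0,1)$. By the description \eqref{eqn:monotone.mu.decomposition}, the claim amounts to showing that $\alpha := \hat\mu(\{0\})$ equals $0$, i.e. that $\hat Q$ does not vanish on an initial interval $(0,\alpha]$. Suppose instead that $\alpha > 0$. For small $t>0$ consider
\[
R_t(u) := \max\{\hat Q(u),\, t u\}.
\]
This is a maximum of convex non-decreasing functions, so it is convex and non-decreasing, and it tends to $0$ as $u \downarrow 0$. Hence $R_t \in \cQ_{\cF_{m,p}}$. It differs from $\hat Q$ only on the set where $\hat Q < tu$, which contains $(0,\alpha]$. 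On that set $0 \leq \hat Q < R_t \leq t \leq \epsilon \leq Q$ once $t \leq \epsilon$. Thus $R_t$ lies pointwise between $\hat Q$ and $Q$ and strictly closer to $Q$ on a set of positive measure, so $J(R_t) < J(\hat Q)$. This contradicts optimality, so $\alpha = 0$. Then $\hat Q$ is strictly increasing: a convex non-decreasing function that is constant on an interval must be constant on everything to its left, which would force $\hat Q \equiv 0$ near $0$. By Lemma~\ref{lem:monotone.characterization}, $\hat\mu$ then has a non-increasing density, so $\hat\mu \in \mathring{\cF}_{m,p}$.

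\emph{Part (ii).} Suppose $Q \leq M$. We must show that $\hat Q$ is bounded. Suppose it is not, so $\hat Q(u) \uparrow \infty$ as $u \uparrow 1$. Truncation by $\min\{\hat Q, M\}$ destroys convexity, so instead we replace the tail by a chord. Take $u_0 < 1$ with $\hat Q(u_0) > M$, and define
\[
R(u) := \hat Q(u) \text{ for } u \leq u_0, \qquad R(u) := \hat Q(u_0) + \hat Q'(u_0-)(u - u_0) \text{ for } u > u_0.
\]
Here we choose $u_0$ so that $\hat Q'(u_0-)$ is finite; this holds for every $u_0<1$. Then $R$ is convex and non-decreasing, with $R(0+) = 0$, so $R \in \cQ_{\cF_{m,p}}$. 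Convexity gives $R \leq \hat Q$ on $(u_0,1)$. Also $R \geq \hat Q(u_0) > M \geq Q$ there. So again $R$ lies between $Q$ and $\hat Q$, and it is strictly closer to $Q$ wherever $R < \hat Q$. That set has positive measure because $\hat Q$ is unbounded while $R$ is bounded. This contradicts optimality, so $\hat\mu$ is compactly supported.

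\emph{Final assertion.} If $\mu^*$ is absolutely continuous, then almost surely no $X_i$ equals $0$. Since $\mu^*$ is supported on $\bR_+$ (as implicitly required), the minimum of the samples is positive, so $\mu_n$ is supported in $[\min_i X_i, \max_i X_i]$ with $\min_i X_i > 0$. Parts (i) and (ii) then apply.

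The only delicate point is choosing perturbations that remain convex; the max construction and the tangent-chord construction handle this. Everything else follows from the pointwise "between" monotonicity of $|\cdot|^p$.
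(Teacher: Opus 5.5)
Your proof is correct. Part (i) is essentially the paper's argument. The paper also replaces a competitor, near the origin, by its maximum with a line through the origin, and then uses the pointwise ``between $Q_\mu$ and the competitor'' comparison. The difference is that the paper splits into two cases. For $\nu=\delta_0$ it uses $\epsilon u$. For $\nu\neq\delta_0$ it uses $\max\{Q_\nu(u_1)u/u_1,\,Q_\nu(u)\}$, with $u_1$ chosen by continuity close to $\sup\{u:Q_\nu(u)=0\}$ so that $Q_\nu(u_1)\le\epsilon$. Your single family $\max\{\hat Q(u),tu\}$ with $t\le\epsilon$ covers both cases at once and needs no continuity step.

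Part (ii) is where you genuinely differ. The paper works on the measure side. It cuts off the mass of $\nu$ beyond $2M$ and moves it to an atom at $0$. On the quantile side this is the shift $u\mapsto Q_\nu((u-\beta)^+)$ with $\beta=\nu((2M,\infty))$ and $Q_\nu(0)=0$, which stays convex, unlike $\min\{\hat Q,M\}$. The paper then compares costs through an explicit suboptimal coupling, built by disintegrating the comonotone coupling, using $|x-y|^p>M^p\ge|0-y|^p$ for $x>2M$ and $y\in[0,M]$.

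Your tangent-line replacement stays entirely inside $L^p([0,1])$ and reuses the same monotonicity of $|\cdot|^p$ as part (i). It therefore avoids the disintegration and the membership check for the truncated measure. What each route buys:
\begin{itemize}
\item The paper's argument gives an explicit radius, $\mathrm{supp}\,\hat\mu\subset[0,2M]$. Yours does not give one directly.
\item Yours gives structure instead. Since $|R-Q_\mu|\le|\hat Q-Q_\mu|$, with strict inequality wherever $R<\hat Q$, optimality forces $R=\hat Q$ on $(u_0,1)$ for \emph{every} $u_0$ with $\hat Q(u_0)>M$. So $\hat Q$ is affine wherever it exceeds $M$, and the density of $\hat\mu$ is constant on the part of its support to the right of $M$, for all $p>1$.
\end{itemize}

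Finally, you treat the ``in particular'' clause explicitly. You also correctly flag that $\mu^*$ must be supported on $\bR_+$ for $\mu_n\in\cP_p(\bR_+)$, which the paper leaves implicit.
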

\begin{proof}
(i) Suppose that $\mu([0, \epsilon]) = 0$ for some $\epsilon > 0$. Then the quantile function $Q_{\mu}$ of $\mu$ is bounded below by $\epsilon$. We claim that if $\nu \in \cF_{m, p} \setminus \mathring{\cF}_{m, p}$ then it is not optimal for $\proj_{\cF_{m, p}} \mu$. From the discussion below \eqref{eqn:monotone.mu.decomposition}, there exists $\delta > 0$ such that $Q_{\nu}(u) = 0$ for $u \leq \delta$. There are two cases to consider:

{\it Case 1. $\nu = \delta_0$}. Then $Q_{\nu} \equiv 0$. Let $\tilde{\nu} = \mathrm{Unif}(0, \epsilon) \in \mathring{\cF}_{m, p}$ which has quantile function $Q_{\tilde{\nu}}(u) = \epsilon u$. Since $0 = Q_{\nu} < Q_{\tilde{\nu}} \leq Q_{\mu}$ on $(0, 1)$, we have
\begin{equation*}
\begin{split}
\cW_p^p(\tilde{\nu}, \mu) = \int_0^1 |\epsilon u - Q_{\mu}(u)|^p \dd u < \int_0^1 |0 - Q_{\mu}(u)|^p \dd u= \cW_p^p(\nu, \mu).
\end{split}
\end{equation*}

{\it Case 2. $\nu \neq \delta_0$}. Then $u_0 := \sup\{u : Q_{\nu}(u) = 0 \} \in (0, 1)$. For $u_1 \in (u_0, 1)$, consider $\tilde{\nu}$ defined in terms of the quantile function:
\begin{equation*}
\begin{split}
Q_{\tilde{\nu}}(u) &:= \max\left\{ Q_{\nu}(u_1) \frac{u}{u_1}, Q_{\nu}(u)\right\} \\
&= \left\{\begin{array}{ll}
Q_{\nu}(u_1) \frac{u}{u_1}, & \text{if } 0 \leq u \leq u_1;\\
Q_{\nu}(u), & \text{if } u_1 < u \leq 1.\\
\end{array}\right.
\end{split}
\end{equation*}
The last equality holds by convexity of $Q_{\nu}$. It is easy to verify that $\tilde{\nu} \in \mathring{\cF}_{m, p}$. By continuity of $Q_{\nu}$, for $u_1$ sufficiently close to $u_0$ we have $Q_{\nu}(u_1) \leq \epsilon$. For such a choice of $u_1$, we have
\begin{equation*}
\begin{split}
\cW_p^p(\tilde{\nu}, \mu) &= \int_0^{u_1} \left|Q_{\nu}(u_1) \frac{u}{u_1} - Q_{\mu}(u)\right|^p \dd u + \int_{u_1}^1 |Q_{\nu}(u) - Q_{\mu}(u)|^p \dd u \\
&< \int_0^1 |Q_{\nu}(u) - Q_{\mu}(u)|^p \dd u\\
&= \cW_p^p(\nu, \mu).
\end{split}
\end{equation*}

(ii) Suppose that $\mu$ is supported on the compact interval $[0, M]$ for some $M > 0$. We will show that $\proj_{\cF_{m, p}} \mu$ is supported on the interval $[0, 2M]$. It suffices to prove the following claim. Let $\nu \in \cF_{m, p}$ be supported on an interval strictly larger than $[0, 2M]$. Then, there exists $\tilde{\nu} \in \cF_{m, p}$ such that $\tilde{\nu}$ is supported on $[0, 2M]$ and $\cW_p(\tilde{\nu}, \mu) < \cW_p(\nu, \mu)$.

Let such an element $\nu$ be given. Define a probability distribution $\tilde{\nu}$ on $\bR_+$ by
\[
\dd \tilde{\nu}(x) = I_{[0, 2M]}(x) \dd \nu(x) + \underbrace{(1 - \nu([0, 2M]))}_\text{$\in (0, 1)$} \dd \delta_0(x).
\]
That is, we cut off the (non-zero) tail of $\nu$ beyond $2M$ and replace that by a mass at $0$.\footnote{Note that here it is possible that $\mu$ charges the point $0$. If not, we may use the construction in (i) to remove the mass at $0$.} By construction, $\tilde{\nu}$ is supported on $[0, 2M]$ and is an element of $\cF_{m, p}$.

We proceed to show that $\cW_p(\tilde{\nu}, \mu) < \cW_p(\nu, \mu)$. Let $\pi \in \Pi(\nu, \mu)
$ be the comonotonic coupling between $\nu$ and $\mu$ which is optimal for $\cW_p(\nu, \mu)$. By the disintegration theorem, we may write
\[
\pi(\dd x, \dd y) = \nu(\dd x) \pi(\dd y \mid x) 
\]
for some kernel $\pi(\dd y \mid x)$ on $\bR_+$. Define a coupling $\tilde{\pi} \in \Pi(\tilde{\nu}, \mu)$ by
\begin{equation} \label{eqn:coupling.construction}
\begin{split}
\tilde{\pi}(\dd x, \dd y) &= I_{[0, 2M]}(x) \nu(\dd x) \pi (\dd y \mid x) + \tilde{\gamma}(\dd x, \dd y),
\end{split}
\end{equation}
where $\tilde{\gamma}(\dd x, \dd y)$ is the unique (trivial) coupling between the positive measures
\[
(1 - \nu([0, 2M])) \dd \delta_0(x),
\]
which is concentrated at $0$, and the remainder
\[
\tilde{\mu}(\dd y) = \mu(\dd y) - \int_{[0, 2M]} \nu(\dd x) \pi(\dd y \mid x),
\]
which has the same mass $1 - \nu([0, 2M])$ and is supported on $[0, M]$. The coupling $\tilde{\pi}$ is suboptimal for $\cW_p(\tilde{\nu}, \mu)$ but is sufficient for the purposes.

Consider
\begin{equation*}
\begin{split}
&\cW_p^p(\tilde{\nu}, \mu) \\
&\leq \int_{[0, 2M] \times [0, M]} |x - y|^p \tilde{\pi}(\dd x, \dd y) \\
&= \int_{[0, 2M]} \left( \int_{[0, M]} |x - y|^p \pi(\dd y \mid x) \right) \nu(\dd x) + \int_{\{0\} \times [0, M]} |0 - y|^p  \tilde{\gamma}(\dd x, \dd y) \\
&\leq \int_{[0, 2M]} \left( \int_{[0, M]} |x - y|^p \pi(\dd y \mid x) \right) \nu(\dd x) + M^p (1 - \nu([0, 2M])).
\end{split}
\end{equation*}

On the other hand, for $x > 2M$ and any $y \in [0, M]$ we have
\[
|x - y|^p > M^p.
\]
It follows that
\[
\int_{[2M, \infty)} \left( \int_{[0, M]} |x - y|^p \pi(\dd y \mid x) \right) \nu(\dd x) > M^p (1 - \nu([0, 2M])).
\]
Hence
\[
\cW_p^p(\tilde{\nu}, \mu) < \int_{\bR_+ \times [0, M]} |x - y|^p \underbrace{\nu(\dd x) \pi(\dd y \mid x)}_\text{$\pi(\dd x, \dd y)$} = \cW_p^p(\nu, \mu),
\]
and the claim is proved.
\end{proof}

As the following example shows, explicit computation of the Wasserstein projection can be non-trivial even for simple distributions.

\begin{example} \label{eg:point.mass}
Let $p = 2$ and let $\mu = \delta_1$ be the point mass at $1$. For $b > 0$, let
 $\nu_b = \mathrm{Unif}(0, b)$ be the uniform distribution on $[0, b]$. Then
\begin{equation*}
\begin{split}
\cW_2^2(\nu_b, \mu) =  \int_0^1 (bu - 1)^2 \dd u 
= \frac{1}{3} (b^2 - 3b + 3).
\end{split}
\end{equation*}
The unique optimizer over $b > 0$ is $b^* = 3/2 = 1.5$. We claim that $\mathrm{Unif}(0, 3/2)$ is optimal over $\cF_{m, 2}$, that is, $\proj_{\cF_{m, 2}} \mu = \mathrm{Unif}(0, 3/2)$. We illustrate this numerically in Example \ref{eg:monotone1}.

We use the first-order condition in Corollary \ref{cor:first.order.condition}. By Proposition \ref{prop:monotone.properties}, $\proj_{\cF_{m, 2}} \mu$ is compactly supported. It suffices to test with $\nu \in \cF_{2,m}$ which is compactly supported. Then $Q = Q_{\nu}: [0, 1] \rightarrow \bR_+$ is convex, non-decreasing and $Q(0) = \lim_{u \downarrow 0} Q(u) = 0$. In particular, $Q$ is bounded. Note that $Q_{\mu}(u) = (3/2)u$. We claim that
\begin{equation} \label{eqn:point.mass.claim}
\begin{split}
&\left. \frac{\dd}{\dd \epsilon} \right|_{\epsilon = 0^+} \int_0^1 \left( \frac{3}{2}u + \epsilon \left(Q(u) - \frac{3}{2}u \right) - 1 \right)^2 \dd u \\
&= \int_0^1 2 \left(\frac{3}{2}u - 1\right) \left(Q(u) - \frac{3}{2} u \right) \dd u =  \int_0^1 (3 u - 2) Q(u) \dd u \geq 0.
\end{split}
\end{equation}
(Note that $\int_0^1 (\frac{3}{2} u - 1) u \dd u = 0$.) 

Since $Q$ is convex, it is absolutely continuous. Write
\[
Q(u) = \int_0^u q(s) \dd s
\]
for some measurable function $q: [0, 1] \rightarrow \bR_+$ (we may take $q$ to be the right derivative of $Q$ which exists everywhere). By Fubini's theorem, we have
\begin{equation*}
\begin{split}    
\int_0^1 (3 u - 2) Q(u) \dd u &= \int_0^1 (3u - 2) \left( \int_0^u q(s) \dd s\right)\dd u \\
&= \int_0^1 q(s) \left( \int_s^1 (3u - 2) \dd u \right) \dd s \\
&= \int_0^1 q(s) K(s) \dd s,
\end{split} 
\end{equation*}
where $K(s) = (3s - 1)(1 - s)/2$. Note that $K(s)$ is non-positive on $[0, 1/3]$, positive on $(1/3, 1]$, and integrates to $0$ on $[0, 1]$. Since $q \geq 0$ is non-decreasing, we have
\begin{equation*}
\begin{split}
\int_0^1 (3u - 1) Q(u) \dd u &= \int_0^{1/3} q(s) K(s) \dd s + \int_{1/3}^1 q(s) K(s) \dd s \\
&\geq \int_0^{1/3} q\Big(\frac{1}{3}\Big) K(s) \dd s + \int_{1/3}^1 q\Big(\frac{1}{3}\Big) K(s) \dd s \\
&= q \Big(\frac{1}{3}\Big)\int_0^1 K(s) \dd s = 0. 
\end{split}
\end{equation*}
This proves the claim, and we conclude that $\proj_{\cF_{m, 2}} \mu = \mathrm{Unif}(0, 3/2)$.
\end{example}

We arrive at the main result of this section.

\begin{theorem} \label{thm:monotone.characterization}
Let $p = 2$ and $\mu_n = \sum_{i = 1}^n (1/n) \delta_{x_i}$ where $x_1, \ldots, x_n > 0$. Then, the density of $\hat{\mu}_n = \proj_{\cF_{m, 2}}  \mu_n$, which is non-increasing, is compactly supported and piecewise constant with finitely many pieces. 
\end{theorem}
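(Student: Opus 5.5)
By Proposition~\ref{prop:monotone.properties}, since all $x_i>0$, $\hat\mu_n$ is compactly supported and lies in $\mathring{\cF}_{m,2}$. So $\hat Q := Q_{\hat\mu_n}$ is bounded, convex and strictly increasing on $[0,1]$, with $\hat Q(0)=0$. By \eqref{eqn:monotone.density} the density equals $1/\hat Q'(F(x))$. It is therefore enough to show that $\hat Q$ is piecewise linear with finitely many pieces.

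\textbf{Representation of the cone.} Write $\hat Q(u)=\int_0^u \hat q(s)\,\dd s$, where $\hat q$ is the right derivative, which is non-decreasing and non-negative. Let $\rho$ be the Lebesgue--Stieltjes measure on $[0,1)$ given by an atom $\hat q(0)$ at $0$ plus $\dd\hat q$ on $(0,1)$. Then
\[
\hat Q(u)=\int_{[0,1)}(u-s)_+\,\rho(\dd s).
\]
Every element of $\cQ_{\cF_{m,2}}$ has this form for some non-negative measure. Conversely, each ramp $(u-s)_+$ with $s\in[0,1)$ is the quantile function of an element of $\cF_{m,2}$. Hence $\cQ_{\cF_{m,2}}$ is a convex cone.

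\textbf{First-order condition.} Apply Corollary~\ref{cor:first.order.condition} with the test functions $Q_\nu=0$, $Q_\nu=2\hat Q$ and $Q_\nu=\hat Q+(\cdot-s)_+$. Let $Q_n:=Q_{\mu_n}$, which equals $x_{(k)}$ on $I_k:=((k-1)/n,k/n]$. Define
\[
G(s):=\int_s^1 (\hat Q-Q_n)(u)\,(u-s)\,\dd u,\qquad s\in[0,1].
\]
The corollary then gives two facts: $G\ge 0$ on $[0,1)$, and $\int G\,\dd\rho=\int_0^1(\hat Q-Q_n)\hat Q\,\dd u=0$. Since $G$ is continuous and non-negative, it follows that $\operatorname{supp}\rho\subset\{G=0\}$.

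The function $G$ is $C^1$, with $G'(s)=-\int_s^1(\hat Q-Q_n)\,\dd u$ and $G''=\hat Q-Q_n$ a.e. Every $s\in\operatorname{supp}\rho\cap(0,1)$ is an interior minimum of $G\ge0$. Hence $G(s)=0$ and $G'(s)=0$ there.

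\textbf{Key step: at most one kink per data cell.} Suppose $0<s_1<s_2<1$ both lie in $\operatorname{supp}\rho$ and in the same closed cell $\bar I_k$. On $[s_1,s_2]$ we have a.e.
\[
G''=h:=\hat Q-x_{(k)},
\]
which is continuous and strictly increasing. The conditions $G'(s_1)=G'(s_2)=0$ give $\int_{s_1}^{s_2}h=0$. Because $h$ is strictly increasing, $h$ is negative and then positive on $[s_1,s_2]$. So $G'(s)=\int_{s_1}^s h$ is strictly negative on $(s_1,s_2)$. Then $G(s_2)-G(s_1)=\int_{s_1}^{s_2}G'<0$, which contradicts $G(s_1)=G(s_2)=0$.

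Therefore each of the $n$ cells $\bar I_k$ contains at most one point of $\operatorname{supp}\rho\cap(0,1)$. The only other possible point of $\operatorname{supp}\rho$ is $0$, which contributes only the initial slope. Thus $\rho$ is a finite sum of atoms, $\hat Q$ is piecewise linear with at most $n+1$ pieces, and the density $1/\hat q$ is piecewise constant with finitely many pieces on its compact support $[0,\hat Q(1)]$.

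\textbf{Main obstacle.} I expect the main obstacle to be the key step above. It needs the strict monotonicity of $\hat Q$ supplied by Proposition~\ref{prop:monotone.properties}(i) to rule out a flat piece of $\hat Q$ at level $x_{(k)}$. It also needs care at the cell endpoints $k/n$, which works because $G$ is $C^1$ across them. A secondary technical point is justifying the cone representation and the resulting complementary-slackness identity $\int G\,\dd\rho=0$ for a general Stieltjes measure $\rho$.
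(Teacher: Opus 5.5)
Your proof is correct. Its first half is the paper's argument in different notation. Your ramp test functions $(u-s)_+$ are the paper's indicator derivatives $h=1_{[t,1]}$, and your $G$ is exactly $-\Phi$. Your identity $\int G\,\dd\rho=0$ (with the atom $\hat q(0)$ at $0$) is the paper's $\hat h(0)\Phi(0)+\int\Phi\,\dd\hat h=0$. So in both proofs $\hat Q$ can bend only on the zero set of $\Phi$.

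The routes diverge at the finiteness step. The paper argues qualitatively. It shows the zero set has empty interior (Step 1, via $Q_0>0$) and no accumulation point (Step 2, applying Rolle's theorem twice along an infinite sequence of zeros to force a flat stretch of $\hat Q$). Compactness then gives finiteness, and this two-step template is what the paper reuses for Theorem~\ref{thm:log.concave.characterization}.

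You use something the paper does not: every interior zero of $G\ge 0$ is a double zero, since $G'=0$ there. With this, two zeros in a single cell of $Q_n$ already contradict the strict monotonicity of $G''=\hat Q-x_{(k)}$, and no limiting argument is needed.

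This gives a quantitative, localized conclusion. There is at most one break point of $\hat Q$ in each $[(k-1)/n,k/n]$, and in fact at most one per distinct data value, since ties merge cells. Hence there are at most $n+1$ pieces. The paper's Discussion says its technique does not provide this kind of information.

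Two smaller gains also follow. Finiteness of $\mathrm{supp}\,\rho$ forces $\hat q$ to be bounded, so compact support follows without Proposition~\ref{prop:monotone.properties}(ii). And Fubini against $\rho$ avoids the boundary term at $u=1$ in the paper's integration by parts, where $\hat h$ may a priori be unbounded.
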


\begin{proof}
We will argue in terms of the quantile functions. Let $Q_0 := Q_{\mu_n}$ be the quantile function of the data. Note that $Q_0 > 0$ and is piecewise constant. By Proposition \ref{prop:monotone.properties}, $\hat{\mu}_n$ has a non-increasing density which is compactly supported on $\bR_+$. Let $\hat{Q} := Q_{\hat{\mu}_n}$ be its quantile function. To prove the theorem, we need to show that $\hat{Q}$ is piecewise affine (see \eqref{eqn:monotone.density}).

We will make use of the first order condition (Corollary \ref{cor:first.order.condition}) which is given by 
\begin{equation} \label{eqn:monotone.proof.FOC}
\int_0^1 (Q(u) - \hat{Q}(u) ) (Q_0(u)- \hat{Q}(u)) \dd u \leq 0, \quad Q \in \cQ_{\cF_{m, 2}}.
\end{equation}
From the definition of $\cF_{m, 2}$ (see Proposition \ref{prop:F.m.p}), one can show that $Q \in  \cQ_{\cF_{m, 2}}$ if and only if there exists $h \in L^2([0, 1])$ such that (i) $h \geq 0$ (a.e.), (ii) $h$ is non-decreasing (a.e.) and (iii) $Q(u) = \int_0^u h(s) \dd s$. Let $\mathcal{H}$ be the set of such functions $h$, and note that $\mathcal{H}$ is a closed convex cone in $L^2([0, 1])$.

Write $\hat{Q}(u) = \int_0^u \hat{h}(s) \dd s$ where $\hat{h} \in \mathcal{H}$. We now express \eqref{eqn:monotone.proof.FOC} in the form
\[
\int_0^1 \left( \int_0^u (h(s) - \hat{h}(s)) \dd s \right) (Q_0(u) - \hat{Q}(u)) \dd u \leq 0, \quad  h \in \mathcal{H}.
\]
By Fubini's theorem, this is equivalent to
\[
\int_0^1 (h(s) - \hat{h}(s)) \Big( \underbrace{\int_s^1 (Q_0(u) - \hat{Q}(u)) \dd u}_\text{$=: \phi(s)$} \Big) \dd s \leq 0, \quad  h \in \mathcal{H}.
\]
It follows that
\begin{equation*}
\int_0^1 h(s) \phi(s) \dd s \leq \int_0^1 \hat{h}(s) \phi(s) \dd s, \quad h \in \mathcal{H}.
\end{equation*}
That is, 
\[
\sup_{h \in \mathcal{H}} \int_0^1 h(s) \phi(s) \dd s = \int_0^1 \hat{h}(s) \phi(s) \dd s.
\]
Since $\mathcal{H}$ is a cone, the supremum must be $0$. Thus, we have
\begin{equation} \label{eqn:cone.inequality.2}
\int_0^1 h(s) \phi(s) \dd s \leq \int_0^1 \hat{h}(s) \phi(s) \dd s = 0, \quad h \in \mathcal{H}.
\end{equation}

Consider the function
\begin{equation} \label{eqn:Phi}
\Phi(t) := \int_t^1 \phi(s) \dd s, \quad t \in [0, 1].
\end{equation}
We claim that $\Phi \leq 0$. Indeed, for each $t$, consider $h \in \mathcal{H}$ defined by $h(s) = 0$ for $s \in (0, t)$ and $h(s) = 1$ for $s \in [t, 1]$. From \eqref{eqn:cone.inequality.2}, we have $\Phi(t) = \int_0^1 h(s) \phi(s) \dd s \leq 0$.

Next, consider the identity $\int_0^1 \hat{h}(s) \phi(s) \dd s = 0$. Integrating by parts, we have
\begin{equation}
\begin{split}
0 &= \int_0^1 \hat{h}(s) (-\Phi'(s)) \dd s = [- \hat{h}(s) \Phi(s)]_{0}^1 + \int_0^1 \Phi(s) \dd \hat{h}(s).
\end{split}
\end{equation}
where the last integral is interpreted in the sense of Lebesgue--Stieljes.

Since $\Phi(1) = 0$, we have
\[
0 = \hat{h}(0) \Phi(0) + \int_0^1 \Phi(s) \dd \hat{h}(s).
\]
Since $\hat{h} \geq 0$ and $\Phi \leq 0$, both terms are non-positive. Thus, both terms are zero. In particular, we have
\begin{equation*} 
\int_0^1 \Phi(s) \dd \hat{h}(s) = 0.
\end{equation*}
It follows that $\dd \hat{h} = 0$ (as a Radon measure) on the set $\{ s : \Phi(s) < 0\}$. In other words, $\hat{h}$ can only increase on the zero set $Z := \{s : \Phi(s) = 0\}$ of $\Phi$.

We claim that $Z$ is finite. If so, $\hat{h}$ can only increase (jump) at finitely many points. Thus, $\hat{Q}(u) = \int_0^u\hat{h}(s) \dd s$ is piecewise affine, implying that the density of $\hat{\mu}$ is piecewise constant with finitely many pieces. We prove this claim in two steps.

\medskip
{\it Step 1:} $Z$ does not contain any non-trivial open interval.\\
We argue by contradiction. Suppose $I = (a, b)$ is such an interval. Then
\[
\Phi'(u) = -\phi(u) = 0, \quad u \in I.
\]
Recall that $\phi(u) = \int_u^1 (Q_0(s) - \hat{Q}(s)) \dd s$. Since $\phi$ is constant on $I$, we see by differentiation that $Q_0(u) = \hat{Q}(u)$ a.e.~on $I$. But $\hat{Q}(u) = \int_0^u \hat{h}(s) \dd s$. Since $Q_0$ is piecewise constant, we have $\hat{h}(u) = 0$ a.e.~on $I$. But since $\hat{h}$ is non-decreasing and $\geq 0$, we must have $\hat{h}(u) = 0$ on $(0, b)$. This gives $\hat{Q}(u) = 0$ on $(0, b)$, and on $I$ in particular. This is a contradiction since we assumed that $Q_0 > 0$.

\medskip

{\it Step 2:} $Z$ does not contain any accumulation point. This implies, by compactness of $[0, 1]$, that $Z$ is finite.

Note that $\Phi$ is continuously differentiable with
\[
\Phi'(t) = -\phi(t) = -\int_t^1 (Q_0(u) - \hat{Q}(u)) \dd u.
\]
Since $\hat{Q}$ is convex and bounded, it is continuous. Also, $Q_0(u)$ is piecewise constant. Thus, the left and right second derivatives $\Phi''(t-)$ and $\Phi''(t+)$ exist everywhere on $(0, 1)$. Moreover, they agree (so $\Phi''(t)$ exists) and are equal to $Q_0(t) - \hat{Q}(t)$, except at finitely many points where $Q_0$ jumps.\footnote{There can be none if $Q_0$ is constant. In this case, $\hat{\mu}_n$ is a uniform distribution by Example \ref{eg:point.mass} (and equivariance under scaling).}

Suppose on the contrary that $Z$ contains an accumulation point $t^*$. Then, there exists a sequence $(t_k)_{k \geq 1} \in Z \setminus \{t^*\}$ (of distinct points) such that $t_k \rightarrow t^*$. Since $\Phi$ is continuous, $\Phi(t^*) = 0$. Passing to a subsequence if necessary, we may assume $(t_k)$ is monotone. We consider the case $t_k \uparrow t^*$; the other case $t_k \downarrow t^*$ is similar. 

Since $Q_0$ is piecewise constant, there exists $c > 0$ and $\delta > 0$ such that $Q_0 = c$ on $[t^* - \delta, t^*)$. Thus, $\Phi''(t) = c-\hat{Q}(t)$ on $[t^* - \delta, t^*)$. By Rolle's theorem, for each $k$ there exists $t_k' \in [t_k, t_{k+1}]$ such that $\Phi'(t_k') = -\phi(t_k') = 0$. Since $t_k' \uparrow t^*$, by continuity of $\Phi'$ we have that $\Phi'(t^*) = 0$. Applying Rolle's theorem again for $k$ sufficiently large such that $t_k' \in [t^*-\delta,t^*)$, there exists $t_k'' \in [t_k', t_{k+1}']$ such that $\Phi''(t_k'') = c-\hat{Q}(t_k'')= 0$. In particular, $\hat{Q}(t_k'') = c$. Since $t_k'' \uparrow t^*$ and $\hat{Q}$ is monotone, we have that $\hat{Q} = c$ on $I := [t_k'', t^*)$, for some $k$ sufficiently large. This implies that $\Phi'' = 0$ on $I$. Since $\Phi(t^*) = \Phi'(t^*) = 0$, we have $\Phi = 0$ on $I$. This is a contradiction since $Z$ does not contain a non-trivial interval by Step 1.
\end{proof}

\section{Log-concave density estimation} \label{sec:log.concave}
In this section, we turn to log-concave density estimation. Recall that a probability measure $\mu \in \cP(\bR)$ is said to be {\it log-concave} if it is either a point mass, or has a density $f$ with respect to the Lebesgue measure such that $\log f : \bR \rightarrow [-\infty, \infty)$ is concave. We denote by $\cF_{lc}$ the set of log-concave distributions on $\bR$. A log-concave distribution has moments of all orders. That is, we have
\begin{equation} \label{eqn:log.concave.moments.finite}
\cF_{lc} \subset \bigcap_{p \geq 1} \cP_p(\bR).
\end{equation}
See \cite[Section B.1]{BL19} for a discussion of this and related results. We begin by characterizing log-concavity in terms of the quantile function.

\begin{lemma} \label{lem:log.concavity}
Let $\mu \in \cP(\bR)$ and let $Q = Q_{\mu}$ be its quantile function. Then $\mu$ is log-concave if and only if 
\begin{itemize}
\item[(i)] $Q$ is constant; or
\item[(ii)] $Q$ is absolutely continuous and there exists a version of $Q'$ such that $Q' > 0$ and $1/Q'$ is concave on $(0, 1)$.
\end{itemize}
In particular, $\cQ_{lc} := \cQ_{\cF_{lc}}$ is the set of $Q \in \cQ$ that satisfies either (i) or (ii).
\end{lemma}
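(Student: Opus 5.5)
The plan is to mirror the proof of Lemma \ref{lem:monotone.characterization}, replacing the monotone density by the quantity $f \circ Q$. The point mass case corresponds exactly to (i), since $Q$ is constant if and only if $\mu$ is a Dirac mass. It therefore remains to treat non-degenerate $\mu$. The key identity is that, if $\mu$ has density $f$ with support interval $I=(a,b)$ on which $f>0$, then $Q$ is absolutely continuous with $Q'(u) = 1/f(Q(u))$. The task thus reduces to showing that $f$ is log-concave on $I$ if and only if $J := f\circ Q = 1/Q'$ is concave on $(0,1)$. The function $J$ is often called the isoperimetric profile.

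\emph{Forward direction.} Suppose $\log f =: -V$ is concave, with $V$ convex and finite on $I$. Then $f>0$ on the open interval $I$, and $F$ is a $C^1$ strictly increasing bijection $I\to(0,1)$. By \cite[Proposition A.18]{BL19}, as in Lemma \ref{lem:monotone.characterization}, $Q$ is absolutely continuous and strictly increasing with $Q' = 1/(f\circ Q) > 0$; note that $f$ is continuous on $I$. To prove that $J = f\circ Q$ is concave, I would compute one-sided derivatives by the chain rule, using that $V$ has one-sided derivatives everywhere on $I$:
\[
J'(u\pm) = f'(Q(u)\pm)\,Q'(u) = \frac{f'(x\pm)}{f(x)} = -V'(x\pm), \qquad x = Q(u).
\]
Since $V'(\cdot\pm)$ is non-decreasing in $x$ and $Q$ is increasing, $J'(u\pm)$ is non-increasing in $u$. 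Moreover $J$ is continuous, so $J$ is concave. Rigorously, I would argue that a continuous function with non-increasing right derivative everywhere is concave.

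\emph{Reverse direction.} Suppose (ii) holds with $J := 1/Q'$ concave and positive on $(0,1)$. A concave positive function is continuous and locally bounded below by positive constants on compact subintervals. Hence $Q' = 1/J$ is continuous, and $Q$ is $C^1$ and strictly increasing, giving a homeomorphism onto $I = (Q(0+), Q(1-))$. As in Lemma \ref{lem:monotone.characterization}, $F = Q^{-1}$ is then $C^1$ on $I$ with $F' = 1/Q'(F) = J\circ F$. This shows that $\mu$ has density $f = J\circ F$ on $I$, and $f=0$ off $I$. For log-concavity, I would consider $g := \log f = \log J \circ F$. Its one-sided derivatives are
\[
g'(x\pm) = \frac{J'(F(x)\pm)}{J(F(x))}\,F'(x) = J'(F(x)\pm),
\]
which is non-increasing in $x$. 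Since $g$ is continuous on $I$, it is concave there, and it equals $-\infty$ outside $I$. Hence $\log f$ is concave on $\bR$.

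The main obstacle is the regularity bookkeeping. This includes choosing the right version of $Q'$, justifying the chain rule for one-sided derivatives of a composition with a $C^1$ homeomorphism, and passing from monotone one-sided derivatives back to concavity. I would handle the chain rule exactly as in the proof of Lemma \ref{lem:monotone.characterization}, by reparametrizing increments through the homeomorphism $Q$ (or $F$). The endpoints of the support need no special treatment, because all statements are made on the open intervals $(0,1)$ and $I$. The final sentence of the statement then follows immediately from the definition $\cQ_{lc} = \{Q_\mu : \mu\in\cF_{lc}\}$.
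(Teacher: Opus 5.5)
Your proposal is correct and matches the paper's proof essentially step for step. In the forward direction you use $Q' = 1/(f\circ Q)$ and the one-sided identity $(1/Q')'(u\pm) = -\phi'(Q(u)\pm)$ with $\phi = -\log f$; in the reverse direction you use the inverse function theorem to get $f = h\circ F$ with $(\log f)'(x\pm) = h'(F(x)\pm)$, and you additionally spell out two points the paper leaves implicit: that continuity plus non-increasing one-sided derivatives gives concavity, and that taking the continuous version $1/J$ of $Q'$ makes $Q$ a $C^1$ homeomorphism.
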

\begin{proof}
$(\Rightarrow)$ Suppose $\mu$ is log-concave. 

If $\mu$ is a point mass, then $Q$ is constant and (i) holds. Otherwise, 
\[
\dd \mu(x) =  f(x) I_{(a, b)}(x) \dd x
\]
for some interval $(a, b)$ and density $f$ such that $f > 0$ on $(a, b)$ and $\phi := -\log f$ is convex. In particular, $f$ is continuous on $(a, b)$ (see \cite[Theorem 10.1]{R70}). By \cite[Proposition A.18]{BL19}, for any $0 < u_0 < u_1 < 1$, we have
\[
Q(u_1) - Q(u_0) = \int_{u_0}^{u_1} \frac{1}{f(Q(s))} \dd s.
\]
Thus $Q$ is continuously differentiable on $(0, 1)$ with
\begin{equation} \label{eqn:Q.derivative}
Q'(u) = \frac{1}{f(Q(u))} = e^{\phi(Q(u))} > 0.
\end{equation}
Since $\phi$ is convex, its one-sided derivatives $\phi'(x-)$ and $\phi'(x+)$ exist for all $x \in (a, b)$. It follows that $1/Q'$ is one-sided differentiable, and we have
\[
\left(\frac{1}{Q'}\right)'(u \pm) = - e^{-\phi(Q(u))} \phi'(Q(u)\pm) e^{\phi(Q(u))} = -\phi'(Q(u)\pm).
\]
Since $\phi'(Q(u)+) \leq \phi'(Q(v)-)$ for $u < v$ from the convexity of $\phi$, we have that $1/Q'$ is concave.

($\Leftarrow$) Clearly, if (i) holds then $\mu$ is a point mass. Assume (ii). Let $h(u) := 1/Q'(u)$ which is positive and concave (hence continuous). Let $F = Q^{-1}$ be the distribution function of $\mu$. By the inverse function theorem, $F$ is differentiable and
\begin{equation} \label{eqn:log.concave.density.derivative}
f(x) := F'(x) = \frac{1}{Q'(F(x))} = h(F(x)), \quad x \in I := Q((-\infty, \infty)).
\end{equation}
Thus, the density $f$ is positive on the range of $Q$ which is an interval. Since the one-sided derivatives of $h$ exist, we may differentiate $f$ one-sidedly and get
\[
(\log f)'(x\pm) = h'(F(x)\pm).
\]
Since $h'$ is non-increasing, $\log f$ is concave and the theorem is proved.
\end{proof}

Next, we show that $\cF_{lc}$ is $\cW_p$-closed and displacement convex for any $p > 1$. Hence, the Wasserstein projection $\proj_{\cF_{lc}}^{(p)}$ is well-defined. We stress that displacement convexity of $\cF_{lc}$ only holds in the univariate setting. It was proved in \cite{santambrogio2016convexity} that the space of log-concave distributions on $\bR^d$ is {\it not} displacement convex when $d \geq 2$.

\begin{proposition}[Displacement convexity of log-concave distributions on $\bR$] \label{thm:log.concave}
The set $\cF_{lc} \subset \cP(\bR)$ of log-concave distributions on $\bR$ is displacement convex\footnote{From \cite[Footnote 1]{santambrogio2016convexity}, this result was
proved by Young-Heon Kim but his proof is not publicly available. For completeness we provide a self-contained proof using Lemma \ref{lem:log.concavity}.} and is closed in $(\cP_p(\bR), \cW_p)$ for any $p \in [1, \infty)$.
\end{proposition}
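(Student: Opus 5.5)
Both claims will be proved in the space of quantile functions, using the characterization in Lemma \ref{lem:log.concavity} together with the isometry of Proposition \ref{prop:Wasserstein.quantile}.

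\emph{Displacement convexity.} Let $Q_0, Q_1 \in \cQ_{lc}$, $t \in (0,1)$, and $Q_t := (1-t)Q_0 + tQ_1$. If both are constant, so is $Q_t$. If only one is constant, say $Q_0$, then $Q_t' = tQ_1'$ and $1/Q_t' = t^{-1}/Q_1'$ is still positive and concave. In the main case both satisfy (ii). Put $h_i := 1/Q_i' > 0$, which is concave; then
\[
\frac{1}{Q_t'} = \frac{1}{(1-t)/h_0 + t/h_1},
\]
which is the weighted harmonic mean of $h_0$ and $h_1$. The key step is that $H(x,y) := \bigl((1-t)/x + t/y\bigr)^{-1}$ is concave on $(0,\infty)^2$ and non-decreasing in each argument. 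Concavity holds because $H$ is positively $1$-homogeneous and its level sets $\{H \ge 1\}$ are convex; equivalently, the weighted harmonic mean is a concave function, as follows from the Hessian computation or from $H(x,y) = \min_{a+b=1}\bigl(a^2 x/(1-t) + b^2 y/t\bigr)$, a minimum of linear functions. Composing a concave non-decreasing $H$ with concave $h_0, h_1$ gives a concave function, so $1/Q_t'$ is positive and concave. Absolute continuity of $Q_t$ is immediate, so $Q_t \in \cQ_{lc}$.

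\emph{Closedness.} Take $\mu_k \in \cF_{lc}$ with $\cW_p(\mu_k,\mu) \to 0$. Then $Q_k \to Q_\mu$ in $L^p$, hence a.e. along a subsequence, and $\mu_k \to \mu$ weakly. I would avoid quantile derivatives here and argue with densities, using the standard fact that weak limits of log-concave measures are log-concave. If infinitely many $\mu_k$ are point masses $\delta_{m_k}$, then $m_k$ converges and $\mu$ is a point mass. Otherwise, write $f_k = e^{-\phi_k}$ with $\phi_k$ convex.

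\begin{itemize}
\item \emph{Degenerate variance.} If $\mathrm{Var}(\mu_k) \to 0$, which is allowed since $\cW_p$ convergence with $p \ge 1$ controls the mean, then $\mu$ is a point mass.
\item \emph{Non-degenerate variance.} Otherwise, pass to a subsequence with variances bounded below. Note that $\cW_p$ convergence implies $\cW_1$ convergence, so first moments converge and variances are bounded above; under $p \ge 2$ the second moments converge directly, and in general one uses the log-concave moment comparison. Standard log-concave bounds then make the densities uniformly bounded, with uniform exponential tails.
\item \emph{Passing to the limit.} Apply a compactness theorem for convex functions (\cite[Theorem 10.9]{R70}) to $\phi_k$ on compact subsets of the interior of the limiting support. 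This yields a convex $\phi$ with $\phi_k \to \phi$ locally uniformly there, so $\mu$ has the log-concave density $e^{-\phi}$. Since $\mu_k \to \mu$ weakly, this density is $\mu$'s.
\end{itemize}

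Alternatively, in quantile language, $1/Q_k'$ are concave and positive. One shows they are locally uniformly bounded on compact subintervals of $(0,1)$, because bounded $L^p$ norm of monotone $Q_k$ bounds the oscillation of $Q_k$ on $[\delta,1-\delta]$. Rockafellar's theorem then gives a concave limit $g \ge 0$. If $g \equiv 0$, then $Q$ is constant; otherwise $g > 0$ on $(0,1)$ by concavity, and passing to the limit in $Q_k(v) - Q_k(u) = \int_u^v 1/h_k$ gives $Q' = 1/g$.

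\emph{Main obstacle.} The hard step is this last one: obtaining the uniform local upper bound on $h_k = 1/Q_k'$ and justifying convergence of $\int_u^v 1/h_k$ to $\int_u^v 1/g$ by dominated convergence. The upper bound follows because a concave positive $h_k$ that is large somewhere is large on a whole subinterval, forcing $Q_k$ to be nearly flat there. That in turn contradicts a.e.\ convergence to a non-constant $Q$ unless $g \equiv 0$ there, which by concavity forces $g \equiv 0$.
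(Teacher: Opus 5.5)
Your displacement-convexity argument is the paper's: $1/Q_t'$ is the weighted harmonic mean of $1/Q_0'$ and $1/Q_1'$, which is concave and non-decreasing in each argument, so the composition is concave. Your handling of the constant cases and the identity $H(x,y)=\min_{a+b=1}\bigl(a^2x/(1-t)+b^2y/t\bigr)$ (which checks out) are correct additions. For closedness, the paper does only what your opening lines do: $\cW_p$-convergence gives weak convergence, and $\cF_{lc}$ is weakly closed \cite[Proposition 3.6]{SW14}. With that citation your proof is complete and coincides with the paper's.

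The self-contained reproofs you then sketch are not reliable as written. The quantile one has both directions reversed. A bound on $\|Q_k\|_p$ bounds the oscillation of $Q_k$ on $[\delta,1-\delta]$, i.e.\ it bounds $\int_\delta^{1-\delta}1/h_k$. Since super-level sets of the concave $h_k$ are intervals, this yields a locally uniform \emph{lower} bound $h_k\ge c_\delta>0$, not an upper bound; a large $h_k$ only makes $Q_k$ flatter. Accordingly, $g\equiv 0$ is not the point-mass case. It would force $Q_k(v)-Q_k(u)=\int_u^v 1/h_k\to\infty$, which is exactly what bounded oscillation excludes. The point-mass case is the failure of the \emph{upper} bound. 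If $h_k(u_k)\to\infty$ with $u_k\in[\delta,1-\delta]$, positivity and concavity give $h_k(u)\ge\min\{u,1-u\}\,h_k(u_k)$ on $(0,1)$. Then $h_k\to\infty$ locally uniformly, $Q_k(v)-Q_k(u)\to 0$, and $Q$ is constant. Otherwise the $h_k$ are locally bounded above and away from $0$. Then \cite[Theorem 10.9]{R70} applied to $-h_k$ gives a concave limit $g>0$, uniformly on compacts, so $1/h_k\to 1/g$ uniformly on compacts (no dominated convergence needed) and $Q'=1/g$. Your ``main obstacle'' paragraph has the right mechanism for the upper bound, but it again ties the degenerate alternative to $g\equiv 0$.

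In the density version, \cite[Theorem 10.9]{R70} needs $\phi_k$ pointwise bounded on the interior of the support of $\mu$, i.e.\ $\liminf_k f_k(x)>0$ there, and you don't justify this. It follows because $f_k(x)\le\epsilon$ forces $f_k\le\epsilon$ on a half-line at $x$. The uniform exponential tails then make $\mu$ put no mass on that side of $x$, contradicting that $x$ is interior to the support. None of this matters if you simply cite weak closedness, as the paper does.
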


\begin{proof}
We first observe that $\cF_{lc}$ is closed in $(\cP_p(\bR), \cW_p)$ for $p \in [1, \infty)$. Suppose $\mu_n \in \cF_{lc}$ and $\cW_p(\mu_n, \mu) \rightarrow 0$ for some $\mu \in \cP_p(\bR)$. Then, $\mu_n$ converges weakly to $\mu$ by \cite[Theorem 6.9]{V08}. Since $\cF_{lc}$ is closed under weak convergence (see \cite[Proposition 3.6]{SW14}), we have $\mu \in \cF_{lc}$.

To show that $Q_{\cF_{lc}}$ is convex, it suffices to show that if $Q_1, Q_2 \in \cQ_{\cF_{lc}}$ are such that $Q_1', Q_2' > 0$ and $1/Q_1', 1/Q_2'$ are concave, then 
\[
\frac{1}{ (1 - \lambda) Q_1' + \lambda Q_2'}
\]
is concave for $\lambda \in [0, 1]$. Note that
\[
\frac{1}{ (1 - \lambda) Q_1' + \lambda Q_2'} = \frac{1}{ (1 - \lambda) \frac{1}{1/Q_1'} + \lambda \frac{1}{1/Q_2'}} =: \Phi\left(\frac{1}{Q_1'},\frac{1}{Q_2'}\right),
\]
where $\Phi$ is the weighted harmonic mean. Since the weighted harmonic mean is concave and is non-decreasing in each variable, $\Phi\left(1/Q_1', 1/Q_2'\right)$ is concave by the conservation properties (under composition) in \cite[Section 3.2.4]{BV04}.  
\end{proof}

In the rest of this section we restrict to the case $p = 2$. We proceed to study some properties of the Wasserstein projection $\proj_{\cF_{lc}}$, beginning with some examples.

\begin{example}[Centering] \label{eg:log.concave.same.mean}
For any $\mu \in \cP_2(\bR)$, the projection $\hat{\mu} = \proj_{\cF_{lc}} \mu$ has the same mean as that of $\mu$. To see this, apply the first-order condition \eqref{eqn:first.order.condition.p2} twice with $\nu$ being the pushforwards of $\hat{\mu}$ under the translation maps $x \mapsto x \pm 1$. Note that $\nu$ is log-concave and $Q_{\nu} - Q_{\mu} = \pm 1$. We obtain
\begin{equation*}
\begin{split}
&\int_0^1 (Q_{\hat{\mu}}(u) - Q_{\mu}(u)) \dd u = 0\\ &\Rightarrow \int_{\bR} x \dd \hat{\mu}(x) = \int_0^1 Q_{\hat{\mu}}(u) \dd u = \int_0^1 Q_{\mu}(u) \dd u = \int_{\bR} x \dd \mu(x).
\end{split}
\end{equation*}
\end{example}

\begin{example}[Uniform distribution on a two-point set] \label{eg:log.concave.uniform}
Let $\mu = (1/2)\delta_{-1} + (1/2)\delta_1$. We claim that $\proj_{\cF_{lc}} \mu = \mathrm{Unif}(-3/2, 3/2)$.  

Since $\mu$ is symmetric about $0$ and $\cF_{lc}$ is invariant under inversion (that is, $T_{\#} \cF_{lc} = \cF_{lc}$ where $T(x) = -x$), by  Proposition \ref{prop:equivariance} we have that $\proj_{\cF_{lc}} \mu$ is symmetric about $0$. Conditioning on $\bR_+$, we see that the projection problem $\proj_{\cF_{lc}} \mu$ is equivalent to $\proj_{\{\delta_0\} \cup \tilde{\cF}} \delta_1$, where $\tilde{\cF}$ is the collection of log-concave and non-increasing densities on $\bR_+$. It is easily seen (see Proposition \ref{prop:log.concave.basic.properties}(i) below) that $\delta_0$ is not optimal. From Example \ref{eg:point.mass}, $\mathrm{Unif}(0, 3/2)$ minimizes $\nu \mapsto \cW_2(\nu, \delta_1)$ over all non-increasing densities on $\bR_+$, log concave or not. Since $\mathrm{Unif}(0, 3/2)$ is log concave, it must be optimal for $\proj_{\tilde{\cF}} \delta_1$. It follows that $\proj_{\cF_{lc}} \mu = \mathrm{Unif}(-3/2, 3/2)$.

Note that if $\nu = \delta_1$ then $\proj_{\cF_{lc}} \nu = \nu = \delta_1$. Note that $\mu \preceq \nu$ but $\proj_{\cF_{lc}} \mu \not\preceq \proj_{\cF_{lc}} \nu$. It follows that $\proj_{\cF_{lc}}$ is not monotone with respect to stochastic dominance (see Remark \ref{rmk:stochastic.dominance}). 
\end{example}

When the true distribution is log-concave, the convergence rate in Wasserstein distance is parametric up to possibly a logarithmic factor. 

\begin{proposition} \label{prop:log.concave.bounds}
Let the true distribution $\mu^*$ be an element of $\cF_{lc}$. Consider the Wasserstein projection estimator $\hat{\mu}_n = \proj_{\cF_{lc}} \mu_n$ where $p = 2$ and $\mu_n = \sum_{i = 1}^n (1/n) \delta_{X_i}$ is the empirical measure of $n$ independent samples from $\mu^*$.
\begin{itemize}
\item[(i)] Let $\sigma \in \bR_+$ be the standard deviation of $\mu^*$ (it is always finite by \eqref{eqn:log.concave.moments.finite}). Then for $n \geq 2$ we have
\begin{equation} \label{eqn:log.concave.bound.1}
\bE\left[ \cW_2^2(\hat{\mu}_n, \mu^*) \right] \leq C \sigma^2 \frac{\log n}{n},
\end{equation}
where $C > 0$ is a universal constant independent of $\mu^*$.
\item[(ii)] Suppose that $\mu^*$ is supported on a compact interval $[a, b]$. Then for $n \geq 2$ we have 
\begin{equation} \label{eqn:log.concave.bound.2}
\bE\left[ \cW_2^2(\hat{\mu}_n, \mu^*) \right] \leq \frac{C (b - a)^2}{n + 1},
\end{equation}
where again $C > 0$ is a universal constant independent of $\mu^*$.
\end{itemize}
\end{proposition}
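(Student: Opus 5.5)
Since $\mu^* \in \cF_{lc}$, we have $\proj_{\cF_{lc}} \mu^* = \mu^*$. Proposition \ref{thm:log.concave} and Theorem \ref{thm:Wasserstein.projection} then give the bound \eqref{eqn:Wasserstein.estimator.bound.1} from Theorem \ref{thm:consistency}:
\[
\cW_2(\hat{\mu}_n, \mu^*) = \cW_2(\proj_{\cF_{lc}}\mu_n, \proj_{\cF_{lc}}\mu^*) \leq \cW_2(\mu_n, \mu^*).
\]
So both parts reduce to bounding $\bE[\cW_2^2(\mu_n, \mu^*)]$ for log-concave $\mu^*$. This is a statement about empirical measures alone, and I would take it from \cite{BL19}, where the log-concave case is treated explicitly.

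For (i), the point-mass case $\sigma = 0$ is trivial, since then $\mu_n = \mu^*$. Otherwise, by Proposition \ref{prop:equivariance} and the fact that $\cW_2$ scales linearly under affine maps, we may normalize $\mu^*$ to have mean $0$ and variance $1$. The target is then $\bE[\cW_2^2(\mu_n,\mu^*)] \leq C \log n / n$ with a universal $C$, which is the known result for log-concave measures (\cite[Corollary 6.12]{BL19} or thereabouts).

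If I had to prove it directly, I would work with the quantile representation $\cW_2^2 = \int_0^1 (F_n^{-1} - F^{-1})^2\,\dd u$. In the bulk I would use the functional $J_2$ of Proposition \ref{prop:rate1}(ii), truncated to the quantile range $[1/n, 1-1/n]$. Log-concavity gives the isoperimetric-type bound $f(x) \geq c\,\min\{F(x), 1-F(x)\}/\sigma$. Hence $F(1-F)/f \lesssim \sigma \max\{F, 1-F\}$ in the bulk, and integrating over the region where $\min\{F,1-F\} \geq 1/n$ costs a factor $\log n$ through the exponential tails. The extreme quantiles contribute $O(\sigma^2 \log^2 n / n)$ naively, so they need a sharper tail estimate. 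I would use the exponential concentration of log-concave laws to bound the contribution of the extreme order statistics by $O(\sigma^2\log n/n)$.

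For (ii), I would apply Proposition \ref{prop:rate1}(ii) directly. For a log-concave density on $[a,b]$, concavity of $I = f\circ F^{-1}$ (this is $1/Q'$ in Lemma \ref{lem:log.concavity}) gives $I(u) \geq \min\{u,1-u\}\cdot \max I$. Also $\max I = \max f \geq 1/(b-a)$. Changing variables $u = F(x)$ gives
\[
J_2 = \int_0^1 \frac{u(1-u)}{I(u)^2}\,\dd u .
\]
This integral diverges logarithmically under only the bound $I \gtrsim \min\{u,1-u\}/(b-a)$, so the bound must instead be done directly via \cite[Theorem 5.1/Section 5]{BL19} for compactly supported laws. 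There the rate $(b-a)^2/n$ holds through the bound $\bE\cW_2^2 \leq \frac{2}{n+1}\int_a^b \frac{F(1-F)}{f}$ combined with $\cW_2^2 \leq (b-a)\cW_1$ and the inequality $\bE \cW_1 \lesssim (b-a)/\sqrt n$. That last combination only gives $n^{-1/2}$, however, so the honest route is to quote the appropriate statement in \cite{BL19}.

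The main obstacle is precisely this: obtaining the empirical-measure rates with universal constants and no extra logarithm in (ii). I would rely on citing \cite{BL19} rather than re-derive the tail and endpoint analysis.
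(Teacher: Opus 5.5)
Your argument is the paper's: $\proj_{\cF_{lc}}\mu^*=\mu^*$ together with the $1$-Lipschitz bound \eqref{eqn:Wasserstein.estimator.bound.1} reduces both parts to bounding $\bE[\cW_2^2(\mu_n,\mu^*)]$. The paper then simply quotes \cite[Corollary 6.12]{BL19} for (i) and \cite[Corollary 6.11]{BL19} for (ii).

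One correction to your detour in (ii): the $J_2$ route does not fail. You applied the bound $I(u)\ge \min\{u,1-u\}\sup I \ge \min\{u,1-u\}/(b-a)$ to both factors of $I(u)^{-2}$, which is what produced the logarithmic divergence. Apply it to one factor only, and integrate the other using $\int_0^1 \dd u/I(u) = Q_{\mu^*}(1^-)-Q_{\mu^*}(0^+) \le b-a$. This gives $J_2 \le (b-a)\bigl(Q_{\mu^*}(1^-)-Q_{\mu^*}(0^+)\bigr)\le (b-a)^2$. Proposition \ref{prop:rate1}(ii) then yields (ii) directly with $C=2$, without citing anything beyond what the paper already states.
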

\begin{proof}
By \cite[Corollary 6.12]{BL19}, we have
\[
\bE\left[ \cW_2^2(\mu_n, \mu^*) \right] \leq C \sigma^2 \frac{\log n}{n}, \quad n \geq 2.
\]
The bound \eqref{eqn:log.concave.bound.1} follows from \eqref{eqn:Wasserstein.estimator.bound.1}. Similarly, the bound \eqref{eqn:log.concave.bound.2} follows from \cite[Corollary 6.11]{BL19} which applies when $\mu^*$ is compactly supported.
\end{proof}

Next, we prove the analogues of Proposition \ref{prop:monotone.properties} and Theorem \ref{thm:monotone.characterization} in the log-concave setting. 

\begin{proposition} \label{prop:log.concave.basic.properties}
Let $p = 2$, $\mu \in \cP_2(\bR)$ and $\hat{\mu} = \proj_{\cF_{lc}} \mu$.
\begin{itemize}
\item[(i)] If $\mu$ is not a point mass, then $\hat{\mu}$ is absolutely continuous.
\item[(ii)] If $\mu$ is compactly supported, then so is $\hat{\mu}$.
\end{itemize}
\end{proposition}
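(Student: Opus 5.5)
For (i), the only log-concave distributions that are not absolutely continuous are point masses. So it suffices to show that when $\mu$ is not a point mass, no $\delta_m$ can be the projection. I will use the mean-preservation property of Example~\ref{eg:log.concave.same.mean}: the projection has the same mean $m_\mu$ as $\mu$. Hence if $\hat\mu$ were a point mass, it would have to be $\delta_{m_\mu}$.

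To rule this out, I will perturb in the direction of a centered uniform distribution. Set $Q_t(u) = m_\mu + t(u - 1/2)$ for $t \ge 0$; this is the quantile function of $\mathrm{Unif}(m_\mu - t/2,\, m_\mu + t/2)$, which is log-concave. Expanding,
\[
\|Q_t - Q_\mu\|_2^2 = \|m_\mu - Q_\mu\|_2^2 - 2t\int_0^1 (u - 1/2)\, Q_\mu(u)\,\dd u + \frac{t^2}{12}.
\]
Because $Q_\mu$ is non-decreasing and not a.e.\ constant (as $\mu$ is not a point mass), $\int_0^1 (u-1/2)Q_\mu(u)\,\dd u = \int_0^1 (u - 1/2)(Q_\mu(u) - Q_\mu(1/2))\,\dd u > 0$. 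The integrand is nonnegative everywhere and positive on a set of positive measure. The linear term is therefore strictly negative, so for small $t > 0$ the uniform distribution is strictly closer to $\mu$ than $\delta_{m_\mu}$. This proves (i), and is equivalent to the first-order condition \eqref{eqn:first.order.condition.p2} failing for $\nu = \mathrm{Unif}$.

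For (ii), suppose $\mu$ is supported on $[a,b]$, so $a \le Q_\mu \le b$. The natural competitor is a truncation of the quantile function, $\tilde Q := \min\{\max\{\hat Q, a\}, b\}$ with $\hat Q = Q_{\hat\mu}$. Pointwise $|\tilde Q - Q_\mu| \le |\hat Q - Q_\mu|$, with strict inequality wherever $\hat Q \notin [a,b]$. The difficulty is that $\tilde Q$ produces atoms at $a$ and $b$, so it is not log-concave in general; unlike the monotone case, there is no "mass at $0$" available. This is the main obstacle.

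To handle it, I plan to use the structure from Lemma~\ref{lem:log.concavity}: $h = 1/\hat Q'$ is positive and concave on $(0,1)$. Suppose $\hat Q(1) = +\infty$, i.e., the support is unbounded above. Concavity of $h$ then forces $h(u) \to 0$ as $u \uparrow 1$, and $h$ is eventually decreasing. I will modify $h$ only near $u = 1$. Choose $u_1$ close to $1$ with $\hat Q(u_1) > b$, and replace $h$ on $[u_1, 1)$ by the constant $h(u_1)$, extended linearly if needed so that it remains concave. Because $h$ is non-increasing on $[u_1,1)$, this replacement is $\ge h$ there and so remains concave: a concave function capped from below on a terminal interval where it is decreasing stays concave, since the new function is the minimum of $h$ and nothing. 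Equivalently, I can take $\tilde h = \max\{h, h(u_1)\}$ on $[u_1,1)$ and $\tilde h = h$ before $u_1$. Here I need to check that the maximum with a constant preserves concavity on a tail where $h$ is monotone. That holds because it amounts to the concave function $h \wedge$ (the extension of its tangent), which I will verify carefully.

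The resulting $\tilde Q$ agrees with $\hat Q$ on $[0,u_1]$ and satisfies $b < \tilde Q \le \hat Q$ on $(u_1,1)$, with $\tilde Q$ bounded. Hence $|\tilde Q - Q_\mu| < |\hat Q - Q_\mu|$ on a set of positive measure, contradicting optimality. The lower tail is symmetric, via the reflection $x \mapsto -x$ and Proposition~\ref{prop:equivariance}. Therefore $\hat Q$ is bounded, and $\hat\mu$ is compactly supported.
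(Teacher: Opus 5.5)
Your part (i) is correct and follows the paper's argument: centering, then a first-order perturbation from $\delta_{m_\mu}$ toward a small centered uniform. Your positivity argument via $(u-\tfrac12)(Q_\mu(u)-Q_\mu(\tfrac12))\ge 0$ is a clean substitute for the paper's use of the convex function $Z(u)=\int_0^u Q$.

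Part (ii) has a genuine gap. The competitor $\tilde h=\max\{h,h(u_1)\}$ on $[u_1,1)$ is not concave. Since $h$ is strictly decreasing near $1$, for $u_1$ close to $1$ you have $h'(u_1-)<0$. But $\tilde h$ is constant to the right of $u_1$, so its slope jumps up from $h'(u_1-)$ to $0$, which is a convex kink. (A minimum of concave functions is concave; a maximum with a constant is not.) For example, take $\hat\mu=\mathrm{Exp}(1)$, so $h(u)=1-u$. Your $\tilde\mu$ then has density $e^{-x}$ up to $\hat Q(u_1)$, followed by a flat piece of height $1-u_1$. This is not log-concave, so $\tilde\mu\notin\cF_{lc}$ and there is no contradiction with optimality.

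The problem is not just the particular choice. Any concave $\tilde h$ that equals $h$ on $(0,u_1]$ must satisfy $\tilde h(u)\le h(u_1)+h'(u_1-)(u-u_1)$ for $u>u_1$. Replacing the tail by this tangent line does work when the tangent is still positive at $u=1$. However, if $h$ is affine on the tail and vanishes at $1$ (exponential-type tails, e.g.\ $\hat\mu$ itself exponential), this bound forces $\tilde h\le h$ on $[u_1,1)$. Then $\tilde Q\ge\hat Q$, which is still unbounded. So surgery confined to $[u_1,1)$, combined with pointwise domination $|\tilde Q-Q_\mu|\le|\hat Q-Q_\mu|$, cannot exclude exponential tails.

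The missing idea is a \emph{global} competitor together with a quantitative cost comparison. The paper conditions $\hat\mu$ on $(-\infty,R]$, which preserves log-concavity. With $Z_R=\hat\mu((-\infty,R])$, the new quantile function is $\hat Q(Z_R u)$. This changes $\hat Q$ on all of $(0,1)$, so pointwise comparison is no longer available. Instead, using $Q_\mu\le b$, one shows that the reduction in $\cW_2^2$ is at least $R(R-2b)(1-Z_R)$, coming from $[Z_R,1]$. This is offset by a loss of order $(1-Z_R)$ from rescaling the bulk, controlled by $\|\hat Q\|_2^2$ and $|b|\,\|\hat Q\|_1$. The net difference is positive once $R$ is large.
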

\begin{proof}
(i) We only use the property that $\mathrm{Uniform}(a, b) \in \cF_{lc}$ for all $a \leq b$. 

Let $Q$ be the quantile function of $\mu$. Assume $\mu$ is not a point mass, so that $Q$ is not a.e.~constant. From Example \ref{eg:log.concave.same.mean}, among all point masses $\delta_x$, $x \in \bR$, the one that minimizes $\cW_2(\delta_x, \mu)$ is the point mass at the mean $\int_{\bR} x \dd \mu(x)$ of $\mu$. By a translation if necessary, we may assume without loss of generality that $\mu$ has mean zero. Thus, it suffices to show that $\hat{\mu} \neq \delta_0$. 

For $\epsilon \geq 0$, let $\nu_{\epsilon} = \mathrm{Uniform}(-\epsilon, \epsilon)$. Its quantile function is given by
\[
Q_{\epsilon}(u) = -\epsilon + 2\epsilon u, \quad u \in [0, 1].
\]
We have
\begin{equation*}
\begin{split}
\cW_2^2(\nu_{\epsilon}, \mu) = \int_0^1 (-\epsilon + 2\epsilon u - Q(u))^2 \dd u.
\end{split}
\end{equation*}
It follows that
\begin{equation} \label{eqn:point.mass.lemma.proof}
\begin{split}
\left. \frac{\dd }{\dd \epsilon}\cW_2^2(\nu_{\epsilon}, \mu) \right|_{\epsilon = 0^+} 
  &= -4 \int_0^1 u Q(u) \dd u,
\end{split}
\end{equation}
since $\int_0^1 Q(u) \dd u = 0$.

Let $Z(u) = \int_0^u Q(s) \dd s$ which is convex ($Q$ is non-decreasing) and vanishes on the boundary: $Z(0) = Z(1) = 0$. Thus $Z(u) \leq 0$ on $[0, 1]$. From convexity, on $(0, 1)$ we have either $Z \equiv 0$ or $Z < 0$. Since $Q$ is not a.e.~constant (in which case the constant must be $0$), we have $Z < 0$ on $(0, 1)$. Integrating by parts, we have
\begin{equation}
\begin{split}
\int_0^1 u Q(u) \dd u &= \left[ u Z(u) \right]_0^1 - \int_0^1 Z(u) \dd u = - \int_0^1 Z(u) \dd u > 0,
\end{split}
\end{equation}
From \eqref{eqn:point.mass.lemma.proof}, we have
\[
\left.\frac{\dd }{\dd \epsilon}\cW_2^2(\nu_{\epsilon}, \mu)\right|_{\epsilon = 0^+} < 0.
\]
Hence $\nu_0 = \delta_0$ is not optimal. We conclude that $\hat{\mu}$ is not a point mass.

(ii) We prove the following one-sided statement which is slightly stronger. Suppose that $\mu$ is supported on $(-\infty, b]$ for some $b < \infty$, so that $Q_{\mu} \leq b$. We show that the support of $\hat{\mu} = \proj_{\mathcal{F}_{lc}} \mu$ is also bounded on the right. That is, if $\hat{Q} := Q_{\hat{\mu}}$ then $\sup_{u \in (0, 1)} \hat{Q}(u) < \infty$. The left tail can be handled similarly.

Suppose on the contrary that $\sup_{u \in (0, 1)} \hat{Q}(u) = \infty$, so the right tail of the density of $\hat{\mu}$ is unbounded on the right. We will construct $\tilde{\mu} \in \cF_{lc}$, with a bounded right tail, such that $\cW_2(\tilde{\mu}, \mu) \leq \cW_2(\hat{\mu}, \mu)$. Since $\sup_{u \in (0, 1)} \hat{Q}(u) = \infty$, for $R > b$ sufficiently large we have $Z_R := \hat{\mu}((-\infty, R]) > 0$. Note that $Z_R \uparrow 1$ as $R \uparrow \infty$. When $Z_R > 0$, let $\tilde{\mu}_R$ be $\hat{\mu}$ conditioned on $(-\infty, Z_R]$, and note that $\tilde{\mu}_R$ is log-concave. Its quantile function is given by
\[
Q_{\tilde{\mu}_R}(u) = \hat{Q}(Z_R u), \quad u \in (0, 1).
\]
Note that $\tilde{Q}_R \leq \hat{Q}(Z_R) < \infty$. 

We estimate $\cW_2^2(\hat{\mu}, \mu) - \cW_2^2(\tilde{\mu}_R, \mu)$ from below. Write
\begin{equation*}
\begin{split}
&\cW_2^2(\hat{\mu}, \mu) - \cW_2^2(\tilde{\mu}_R, \mu)\\
&= \int_0^1 (\hat{Q}(u) - Q_{\mu}(u))^2 \dd u - \int_0^1 (\hat{Q}(Z_Ru) - Q_{\mu}(u))^2 \dd u\\
&= \int_0^1 (\hat{Q}(u) - \hat{Q}(Z_Ru)) (\hat{Q}(u) + \hat{Q}(Z_Ru) - 2 Q_{\mu}(u)) \dd u \\
&\geq \int_0^1 (\hat{Q}(u) - \hat{Q}(Z_Ru)) (\hat{Q}(u) + \hat{Q}(Z_Ru) - 2b) \dd u,
\end{split}
\end{equation*}
where in the last line we used the inequalities $\hat{Q}(u) - \hat{Q}(Z_Ru) \geq 0$ ($\hat{Q}$ is non-decreasing and $u \geq Z_R u$) and $Q_{\mu} \leq b$.

Expanding and rearranging the integrals, we see that the above is equal to
\begin{equation*}
\begin{split}
&\int_0^1 \hat{Q}^2 (u) \dd u  - \int_0^1 \hat{Q}^2(Z_Ru) \dd u - 2b \left( \int_0^1 \hat{Q} (u) \dd u  - \int_0^1 \hat{Q}(Z_Ru) \dd u \right) \\
&= \int_0^1 \hat{Q}^2(u) \dd u - \frac{1}{Z_R} \int_0^{Z_R} \hat{Q}^2(u) \dd u - 2b \left( \int_0^1 \hat{Q}(u) \dd u - \frac{1}{Z_R} \int_0^{Z_R} \hat{Q}(u) \dd u \right)\\
&= \int_{Z_R}^1 \left( \hat{Q}^2(u) - 2b \hat{Q}(u) \right) \dd u + \frac{Z_R - 1}{Z_R} \int_0^{Z_R} \left( \hat{Q}^2(u) - 2b \hat{Q}(u) \right) \dd u.
\end{split}
\end{equation*}
Since $\hat{Q}(u) \geq R$ for $u \geq Z_R$, when $R \geq \max\{0, 2b\}$ the first term is bounded below by $R (R - 2b) (1 - Z_R)$. On the other hand, for $R$ sufficiently large such that $Z_R \leq \frac{1}{2}$, the second term is bounded above by
\[
2(1 - Z_R )K, \quad K :=  \int_0^1  \left( \hat{Q}^2(u) - 2|b| |\hat{Q}(u)| \right) \dd  u < \infty.
\]
(Recall that $\hat{Q} \in L^2([0, 1])$. It follows that for $R$ sufficiently large, we have
\begin{equation*}
\begin{split}
\cW_2^2(\hat{\mu}, \mu) - \cW_2^2(\tilde{\mu}_R, \mu) &\geq R(R - b) (1 - Z_R) - 2(1 - Z_R) K \\
&= (1 - Z_R) (R(R - b) - 2K),
\end{split}
\end{equation*}
which is positive when additionally $R(R - b) - 2K > 0$. Thus we may let $\tilde{\mu} = \tilde{\mu}_R$ for such a value of $R$.
\end{proof}

\begin{theorem} \label{thm:log.concave.characterization}
Let $p = 2$ and $\mu_n = \sum_{i = 1}^n (1/n) \delta_{x_i}$ where $x_1, \ldots, x_n \in \bR$, be an empirical measure, and let $\hat{\mu}_n = \proj_{\cF_{lc}} \mu_n$. If $\mu_n$ is a point mass, then $\hat{\mu}_n = \mu_n$. Otherwise, $\hat{\mu}_n$ has a compactly supported density which is piecewise log-affine with finitely many pieces.
\end{theorem}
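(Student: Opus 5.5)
The plan mirrors the proof of Theorem \ref{thm:monotone.characterization}. The first-order condition of Corollary \ref{cor:first.order.condition} is used in a parametrization in which the constraint becomes a convex cone, and complementary slackness then confines the "curvature" of the estimator to the zero set of an explicit function. That zero set is then shown to be finite.

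\emph{Setup.} If $\mu_n$ is a point mass it lies in $\cF_{lc}$, so $\hat{\mu}_n=\mu_n$. Otherwise Proposition \ref{prop:log.concave.basic.properties} shows that $\hat{\mu}_n$ is absolutely continuous and compactly supported. By Lemma \ref{lem:log.concavity}, $\hat Q := Q_{\hat{\mu}_n}$ is then bounded and absolutely continuous, and $\hat h := 1/\hat Q'$ is positive and concave on $(0,1)$. By \eqref{eqn:log.concave.density.derivative} we have $(\log f)'(x\pm) = \hat h'(F(x)\pm)$. Hence the density is piecewise log-affine with finitely many pieces if and only if $\hat h$ is piecewise affine with finitely many pieces, i.e.\ if and only if the nonpositive measure $\dd \hat h'$ has finite support. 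Let $Q_0 := Q_{\mu_n}$ (piecewise constant) and define
\[
\psi(s) := -\int_0^s (\hat Q - Q_0)\,\dd u = \int_s^1 (\hat Q - Q_0)\,\dd u,
\]
where the two expressions agree by Example \ref{eg:log.concave.same.mean}. Any competitor has $Q = c + \int_{1/2}^u 1/g$ with $g$ positive and concave. The constant $c$ is handled by translation invariance, and Fubini turns \eqref{eqn:first.order.condition.p2} into
\[
\int_0^1 \psi\,(1/g - 1/\hat h)\,\dd s \ge 0 .
\]
Since $\cQ_{lc}$ is convex, we may take $g = \hat h + \epsilon\eta$ with $\eta = g_1 - \hat h$ and $g_1$ concave, and differentiate at $\epsilon=0^+$. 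This gives
\[
\int_0^1 w\,\eta\,\dd s \le 0 \quad\text{for all admissible directions } \eta, \qquad w := \psi/\hat h^2 .
\]
The admissible directions form a cone that contains every concave $\eta$, because $\hat h + \epsilon \eta$ is concave and (for bounded $\eta$ and small $\epsilon$) positive on compacts. Using $g = \lambda \hat h$ gives $\int w\hat h = 0$, and affine $\eta$ of both signs gives $\int w = \int u\,w = 0$.

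\emph{Complementary slackness.} Let $W$ be the second antiderivative of $w$ with $W(0)=W(1)=0$; the conditions $\int w = \int u w = 0$ force $W'$ to vanish at the endpoints as well. Testing against the concave functions $\eta = -(u-t)_+$ and $\eta = -(t-u)_+$ gives $W(t) \ge 0$ for every $t$ (up to the sign convention). Integrating $\int w\hat h = 0$ by parts twice then gives
\[
\int_0^1 W\,\dd \hat h' = 0 ,
\]
with boundary terms vanishing. Since $W \ge 0$ and $\dd\hat h' \le 0$, the measure $\dd\hat h'$ is concentrated on $Z := \{W = 0\}$. It remains to show that $Z$ is finite.

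\emph{Finiteness of $Z$.} This follows Steps 1 and 2 of Theorem \ref{thm:monotone.characterization}.
\begin{itemize}
\item $Z$ contains no open interval. If it did, then $w = 0$ there, so $\psi \equiv 0$ and hence $\hat Q = Q_0$ a.e.\ on that interval. This is impossible, because $\hat Q$ is continuous and strictly increasing while $Q_0$ is piecewise constant.
\item $Z$ has no interior accumulation point $t^*$. Suppose $t^*$ is one, and take $Q_0 \equiv c$ on a one-sided neighbourhood of $t^*$. Applying Rolle's theorem twice, first to $W$ and then to $W'$, produces infinitely many zeros of $w$, hence of $\psi$, accumulating at $t^*$. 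A third application produces infinitely many zeros of $\psi' = Q_0 - \hat Q = c - \hat Q$. Since $\hat Q$ is strictly increasing, this is a contradiction.
\end{itemize}

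\emph{Main obstacle.} I expect the main difficulty to be the endpoints $u = 0,1$, where $\hat h$ may tend to $0$ and $w = \psi/\hat h^2$ may blow up. This threatens three steps: differentiating under the integral in the first-order condition, the boundary terms in the integrations by parts, and possible accumulation of $Z$ at $0$ or $1$. I would first try to control this with the compact support of $\hat{\mu}_n$, which makes $\int 1/\hat h = \hat Q(1)-\hat Q(0) < \infty$. I would then use the behaviour of $\psi$ near the endpoints: $\psi(s) = O(s)$ near $0$ and $O(1-s)$ near $1$, with a definite sign there because $\hat Q$ extends beyond the data range as in Example \ref{eg:log.concave.uniform}. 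From this sign I would show that $W$ is strictly positive on small one-sided neighbourhoods of $0$ and $1$. If that fails, the fallback is to restrict all test directions to functions supported in compact subintervals $[\delta, 1-\delta]$, run the argument there, and treat the two end pieces separately.
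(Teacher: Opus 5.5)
\emph{The gap: positivity of $\hat h$ at the endpoints.} Your route differs from the paper's. The paper perturbs $\hat h$ locally: on a subinterval where $\phi=-\psi$ has a strict sign, it replaces $\hat h$ by its chord or by the minimum of two one-sided tangent lines. You instead transplant the cone/complementary-slackness argument of Theorem~\ref{thm:monotone.characterization}, with $W$ playing the role of $\Phi$. If you knew $\inf_{(0,1)}\hat h>0$, your argument would go through as written. Then $w$ is bounded and all your test directions are admissible. Also $W(s)=O(s^2)$ and $|\hat h'(s)|=O(1/s)$ near $0$ (symmetrically near $1$), so the boundary terms vanish, and Rolle's theorem also works on one-sided neighbourhoods of $0$ and $1$.

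That hypothesis is exactly the gap, and it is not a technicality. Admissibility requires $\hat h+\epsilon\eta>0$ on all of $(0,1)$, not just on compacts. Each of $\eta=-1,-u,-(u-t)_+,-(t-u)_+$ is strictly negative at $0$ or at $1$. So if $\hat h(0^+)=0$ or $\hat h(1^-)=0$, the directions negative at that endpoint are inadmissible for every $\epsilon>0$. The derivation of $\int w=\int uw=0$, of $W\ge 0$, and hence of $\int W\,\dd\hat h'=0$, then collapses. Compact support of $\hat\mu_n$ only gives $1/\hat h\in L^1$, and that does not rule this out: $f(x)=2(1-x)$ on $[0,1]$ is log-concave with $h(u)=2\sqrt{1-u}$.

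\emph{Why your remedies fail, and a fix.} The claim that $\hat Q$ extends beyond the data range is not proved anywhere; Example~\ref{eg:log.concave.uniform} is only a symmetric special case. Restricting to directions supported in $[\delta,1-\delta]$ does not fit your framework either, because no nonzero concave function is supported there. Compactly supported admissible directions must borrow curvature from $\hat h$ itself. That is precisely the paper's device, and it supplies the missing lemma because it never touches the endpoints. Take a compact $[a,b]\subset(0,1)$.
\begin{itemize}
\item If $\psi<0$ on $[a,b]$, replacing $\hat h$ there by its chord shows $\hat h$ is affine on $[a,b]$.
\item If $\psi>0$ on $[a,b]$, replace $\hat h$ by the minimum of its tangent lines at $a$ (slope $\hat h'(a+)$) and at $b$ (slope $\hat h'(b-)$). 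This shows $\dd\hat h'$ has at most one atom in $(a,b)$.
\end{itemize}
Now $\psi(0)=\psi(1)=0$, and $\psi'=Q_0-\hat Q$ is strictly decreasing wherever $Q_0$ is constant. So $\psi$ is strictly concave on each interval of constancy of $Q_0$, and it has a strict sign on some $(0,\delta)$ and on some $(1-\delta,1)$. Hence $\hat h$ is affine near both endpoints, and $1/\hat h\in L^1$ forces $\hat h(0^+),\hat h(1^-)>0$. With this lemma your proof is complete.

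Note, however, that these two local facts, plus the finiteness of the zero set of $\psi$, already prove the theorem directly; this is essentially the paper's Steps 1--2. Your $W$-argument has the appeal of exactly paralleling the monotone case. But it depends on an endpoint lemma that the local argument provides for free.
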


\begin{lemma} \label{lem:h.piecewise.affine}
Let $\mu \in \cF_{lc}$ be non-degenerate. Then, the density of $\mu$ is piecewise log-affine if and only if the function $h := 1/Q_{\mu}'$, which is positive and concave by Lemma \ref{lem:log.concavity}, is piecewise affine.
\end{lemma}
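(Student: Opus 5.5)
The plan is to use the identity \eqref{eqn:log.concave.density.derivative} from the proof of Lemma \ref{lem:log.concavity}. It says that on the interior $I = (a, b)$ of the support of $\mu$, the density is $f(x) = h(F(x))$ with $F = Q_\mu^{-1}$, where $h = 1/Q_\mu'$. Since $F$ is a strictly increasing homeomorphism from $I$ onto $(0,1)$, subintervals of $I$ correspond bijectively to subintervals of $(0,1)$ via $J \mapsto F(J)$. So I will prove that $\log f$ is affine on a subinterval $J \subset I$ if and only if $h$ is affine on $F(J)$. The global statement then follows by taking the finitely many pieces on one side and mapping them to the other side by $F$ or $Q_\mu$.

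The local equivalence rests on the chain rule. Both $h$ and $\log f$ are concave, so they have one-sided derivatives everywhere. From the proof of Lemma \ref{lem:log.concavity},
\[
(\log f)'(x\pm) = h'(F(x)\pm), \qquad x \in I.
\]
Equivalently, $h'(u\pm) = (\log f)'(Q_\mu(u)\pm)$. A concave function is affine on an open interval if and only if its right derivative is constant there. By the displayed identity, the right derivative of $\log f$ is constant on $J$ exactly when the right derivative of $h$ is constant on $F(J)$. Hence $\log f$ is affine on $J$ if and only if $h$ is affine on $F(J)$. The endpoints of the pieces, finitely many points, can be ignored.

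The one step needing care is the definition of "piecewise affine" on an open interval when the support is unbounded or $h$ degenerates at the endpoints $u \to 0,1$. I will take it to mean that there are finitely many breakpoints $0 = u_0 < u_1 < \dots < u_k = 1$ with $h$ affine on each $(u_{j-1}, u_j)$, and define piecewise log-affine for the density the same way on $(a, b)$ with breakpoints $x_j = Q_\mu(u_j)$, including $x_0 = a$ and $x_k = b$, possibly infinite. Since $F$ is an increasing bijection, the correspondence $u_j \leftrightarrow x_j$ is one-to-one, so the number of pieces is preserved. I expect no real obstacle beyond this bookkeeping and the chain rule for one-sided derivatives, which is already established in the proof of Lemma \ref{lem:log.concavity}.
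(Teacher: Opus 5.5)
Your proposal is correct and follows the same route as the paper. Both arguments rest on the identity $(\log f)'(x\pm) = h'(F(x)\pm)$ from the proof of Lemma \ref{lem:log.concavity}, together with the fact that $F$ is a strictly increasing bijection from the interior of the support onto $(0,1)$, so piecewise constancy of one-sided derivatives (hence piecewise affineness) transfers between $\log f$ and $h$. Your extra bookkeeping (affine on $J$ if and only if affine on $F(J)$, with breakpoints $x_j = Q_\mu(u_j)$) simply makes explicit the chain of equivalences that the paper lists.
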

\begin{proof}
Let $f$ be the log-concave density of $\mu$. From \eqref{eqn:log.concave.density.derivative}, on the open interval $\{x: f(x) > 0\}$, $\log f$ is one-sided differentiable with
\[
(\log f)'(x \pm) = h'(F(x) \pm),
\]
where $F$ is the distribution function of $\mu$. Since $F$ is continuous and strictly increasing on the range of $Q_{\mu}$, the proof is completed by observing that the following statements are equivalent:
\begin{itemize}
\item[(i)] $f$ is piecewise log-affine.
\item[(ii)] $\log f$ is piecewise affine.
\item[(iii)] $(\log f)'$ is piecewise constant.
\item[(iv)] $h'$ is piecewise constant
\item[(v)] $h$ is piecewise affine.
\end{itemize}
\end{proof}

\begin{proof}[Proof of Theorem \ref{thm:log.concave.characterization}]
If $\mu_n$ is a point mass, it is clear that $\hat{\mu}_n = \mu_n$. In the rest of the proof we assume that $\mu_n$ is not a point mass.  Let $Q_0$ be the piecewise constant quantile function of $\mu_n$.

Since $\mu_n$ is not a point mass, by Proposition \ref{prop:log.concave.basic.properties}, $\hat{\mu}_n$ has a compactly supported density which is log-concave. Let $\hat{Q} := Q_{\hat{\mu}_n}$ be the quantile function of $\hat{\mu}_n$, and let $\hat{h} := 1/\hat{Q}'$ which is positive and concave. Since $\hat{Q}$ is bounded, we have that $1/h \in L^1(0, 1)$. From Lemma \ref{lem:h.piecewise.affine}, the theorem is proved if we show that $\hat{h}$ is piecewise affine with finitely many pieces.

Write
\[
\hat{Q}(u) = \hat{a} + \int_0^u \frac{1}{\hat{h}(s)} \dd s,
\]
where $\hat{a} := \hat{Q}(0)$. Apply the first order condition \eqref{eqn:first.order.condition.p2} with
\[
Q(u) = a + \int_0^u  \frac{1}{h(s)} \dd s,
\]
where $a \in \bR$ and $h > 0$ is concave and $1/h \in L^1([0, 1])$. Let $\mathcal{H}$ denote the set of such functions. We obtain
\[
\int_0^1 \left(a + \int_0^u \frac{1}{h(s)} \dd s - \hat{a} - \int_0^u \frac{1}{\hat{h}(s)} \dd s    \right) (Q_0(u) - \hat{Q}(u)) \dd u \leq 0.
\]
From  Example \ref{eg:log.concave.same.mean}, we have $\int_0^1 Q_0(u) \dd u = \int_0^1 \hat{Q}(u) \dd u$. So, we may drop $\hat{a}$ and $a$, and use Fubini's theorem to get
\begin{equation} \label{eqn:log.concave.first.order.condition}
\int_0^1 \frac{1}{h(s)} \phi(s) \dd s \leq \int_0^1 \frac{1}{\hat{h}(s)} \phi(s) \dd s, \quad h \in \mathcal{H},
\end{equation}
where $\phi(s) := \int_s^1 (Q_0(u) - \hat{Q}(u)) \dd u$. Since $\mathcal{H}$ is a convex cone (see Theorem \ref{thm:log.concave}), we have that the right hand side of \eqref{eqn:log.concave.first.order.condition} is zero. 

For $h \in \mathcal{H}$ and $\epsilon \in [0, 1)$, we have $\hat{h} + \epsilon (h - \hat{h}) \in \mathcal{H}$, so that
\begin{equation}  \label{eqn:log.concave.first.order.condition2.prelimit}
\int_0^1 \frac{1}{\hat{h}(s) + \epsilon(h(s) - \hat{h}(s))} \phi(s) \dd s \leq  \int_0^1 \frac{1}{\hat{h}(s)} \phi(s) \dd s,
\end{equation}
and equality holds when $\epsilon = 0$. Taking the derivative at $\epsilon = 0+$, we have 
\begin{equation} \label{eqn:log.concave.first.order.condition2}
\int_0^1 -\frac{h(s) - \hat{h}(s)}{\hat{h}^2(s)} \phi(s) \dd s \leq 0,
\end{equation}
whenever the derivative can be taken under the integral sign.

\medskip

We organize the reminder of the proof in two steps.

\medskip

{\it Step 1.} $\hat{h}$ cannot be strictly concave on a non-trivial open subinterval $I$ of $[0, 1]$. Hence, any non-trivial open interval of $I$ contains a non-trivial interval $J$ on which $\hat{h}$ is affine.

Suppose towards a contradiction that $\hat{h}$ is strictly concave on an open interval $I$. Consider
\[
\phi'(s) = - (Q_0(s) - \hat{Q}(s)), \quad s \in I.
\]
Since $Q_0$ is piecewise constant and $\hat{Q}$ is (continuous and) strictly increasing, there exists a non-trivial interval $J = [a, b] \subset I$ on which $\phi'(s) \neq 0$. Since $\phi'$ is continuous, this implies that $\phi$ is either strictly increasing or strictly decreasing on $J$. Replacing $J$ by a smaller subinterval if necessary, we may assume either $\phi(s) > 0$ on $J$ or $\phi(s) < 0$ on $J$. 

Since $\hat{h}$ is strictly concave on $[a, b]$, for $u \in (a, b)$ we have the strict inequalities
\begin{equation*}
\begin{split}
&\frac{b - s}{b - a} \hat{h}(a) + \frac{s - a}{b - a} \hat{h}(b) < \hat{h}(s), \quad \text{and}\\
&\hat{h}(s) < \min\{\hat{h}(a) + \hat{h}'(a+)(s - a), \hat{h}(b) + \hat{h}'(b-)(s - b)\}.
\end{split}
\end{equation*}
If $\phi(s) > 0$ on $J$, define
\[
\tilde{h}(s) :=
\left\{\begin{array}{ll}
\hat{h}(s), & \text{if } s \notin J; \\
\frac{1}{2}h(s) + \frac{1}{2}\left(\frac{b - s}{b - a} \hat{h}(a) + \frac{s - a}{b - a} \hat{h}(b)\right), & \text{if } s \in J.
\end{array}\right.
\]
Otherwise (that is, if $\phi(s) < 0$ on $J$), define
\[
\tilde{h}(s) :=
\left\{\begin{array}{ll}
\hat{h}(s), & \text{if } s \notin J; \\
\min\{\hat{h}(a) + \hat{h}'(a+)(s - a), \hat{h}(b) + \hat{h}'(b-)(s - b)\}, & \text{if } s \in J.
\end{array}\right.
\]
We have $\tilde{h}\in \mathcal{H}$ and the limit from \eqref{eqn:log.concave.first.order.condition2.prelimit} to \eqref{eqn:log.concave.first.order.condition2} is justified in both cases. Since
\[
-(\tilde{h}(s) - \hat{h}(s))\phi(s) > 0, \quad s \in (a, b),
\]
it follows that 
\[
\int_0^1 -\frac{h(s) - \hat{h}(s)}{\hat{h}^2(s)} \phi(s) \dd s = \int_J  \frac{-(h(s) - \hat{h}(s)) \phi(s)}{\hat{h}^2(s)} \dd s > 0,
\]
contradicting the first-order condition \eqref{eqn:log.concave.first.order.condition2}.

\medskip

Let $\mathcal{J} = (J_{\alpha})_{\alpha}$ be the (countable) family of maximal open subintervals of $[0, 1]$, such that $\hat{h}$ is affine on each $J_{\alpha}$. Note that the family $(J_{\alpha})_{\alpha}$ is pairwise disjoint. From Step 1, the union $\bigcup_{\alpha} J_{\alpha}$ is dense in $[0, 1]$.

\medskip

{\it Step 2.} For any $u_0 \in [0, 1]$, there exists an open interval $I$ in $[0, 1]$, containing $u_0$, such that $I$ intersects at most finitely many $J_{\alpha}$. Using the compactness of $[0, 1]$, a standard topological argument shows that $[0, 1] \setminus \bigcup_{\alpha} J_{\alpha}$ is finite. Thus $(J_{\alpha})$ is a finite family and we have that $h_{\alpha}$ is piecewise affine with finitely many pieces.

If $u_0 \in J_{\alpha}$ for some $\alpha$, we may simply pick $I = J_{\alpha}$. Suppose that $u_0 \notin \bigcup_{\alpha} J_{\alpha}$. To obtain a contradiction, assume any open interval containing $u_0$ intersects infinitely many $J_{\alpha}$. Hence, there exists $((a_k, b_k))_{k = 1}^{\infty} \subset \mathcal{J}$, with $b_k - a_k \rightarrow 0$, such that either
\[
b_k < a_{k+1} \ \ \forall k \quad \text{and} \quad a_k, b_k \uparrow u_0,  
\]
or
\[
a_{k+1} < b_k \ \ \forall k  \quad \text{and} \quad a_k, b_k \downarrow u_0.
\]
We consider the former case; the other one is similar. 

Consider the value $\phi'(u_0) = -(Q_0(u) - \hat{Q}(u))$. If $\phi'(u_0) \neq 0$, we may find by continuity $\delta > 0$ such that $\phi'(s)$ and $\phi'(u_0)$ have the same nonzero sign on $[u_0 - \delta, u_0]$. If $\phi'(u_0) = 0$, since $Q_0$ is piecewise constant, there exists $\delta > 0$ such that $Q_0$ is constant on $[u_0 - \delta, u_0)$. But $\hat{Q}$ is strictly increasing. Hence, $-(Q_0(u) - \hat{Q}(u))$ is strictly increasing and we have $\phi'(s) < 0$ on $[u_0 - \delta, u_0)$. In all cases, $\phi$ is strictly monotone on $[u_0 -\delta, u_0)$ for some $\delta > 0$. Replacing $\delta$ by a smaller positive value if necessary, we have that $\phi > 0$ on $[u_0 -\delta, u_0)$ or $\phi < 0$ on $[u_0 - \delta, u_0)$.

Let $k_0$ be sufficient large such that $a_{k_0} > u_0 - \delta$. Since $\hat{h}'(a_k)$ is strictly decreasing in $k$, for $s \in (a_{k_0}, u_0)$ we have the strict inequalities
\begin{equation*}
\begin{split}
&\frac{u_0 - s}{u_0 - a_{k_0}} \hat{h}(a_{k_0}) + \frac{s - a_{k_0}}{u_0 - a_{k_0}} \hat{h}(u_0) < \hat{h}(s), \quad \text{and}\\
&\hat{h}(s) < \min\{\hat{h}(a_{k_0}) + \hat{h}'(a_{k_0}+)(s - a_{k_0}), \hat{h}(u_0) + \hat{h}'(u_0-)(s - u_0)\}.
\end{split}
\end{equation*}
Now we may argue as in Step 1 to construct a concave $\tilde{h}$ which perturbs $\hat{h}$ on $(a_{k_0}, u_0)$ and violates the first-order condition \eqref{eqn:log.concave.first.order.condition2}. This completes the proof of the theorem.

\end{proof}

\section{Implementation and examples} \label{sec:implementation}
In this section, we implement monotone and log-concave density estimation with Wasserstein projection upon convexity-preserving discretizations, and provide examples.\footnote{We refer the reader to the Github repository \url{https://github.com/tkl-wong/wasserstein-projection-estimation} for the codes and details of our implementation.} Our purpose here is simply to illustrate the empirical behaviors of the Wasserstein projection estimator; further work is needed to develop more efficient and scalable algorithms and, more importantly, an understanding of when estimators based on optimal transport may be more suitable for real data.\footnote{In a parametric framework (and under suitable regularity conditions), it was shown recently in \cite{TJS25} that the Wasserstein projection is asymptotically optimal, in a suitable sense, under infinitesimal additive perturbation of the original data set.}  

We focus on the case $p = 2$, but note that the algorithms can be modified for general $p \geq 1$ (when $p = 1$ uniqueness of the solution can no longer be guaranteed). We will be working with quantile functions to utilize convexity of the Wasserstein projection. Consider a data-set represented as an empirical distribution $\mu_n = \sum_{i = 1}^n (1/n) \delta_{x_i}$, or more generally $\mu_n = \sum_{i = 1}^n w_i \delta_{x_i}$ where $w_i$ is the weight of $x_i$.

Let a partition
\[
\Pi: 0 = u_0 < u_1 < \cdots < u_K = 1
\]
of $[0, 1]$ be given, and let $\Delta u_i := u_i - u_{i-1}$. In the following, we assume that $Q_0 := Q_{\mu_n}$ is piecewise constant with respect to $\Pi$. That is, for each $i$ there exists $y_i$ such that
\[
Q_0(u) = y_i, \quad u_{i-1} \leq u < u_i. 
\]
This is the case if $\Pi$ is a refinement of the uniform grid $(i/n)_{i = 0}^n$ (or its weighted analogue). Otherwise, we may discretize and let $y_i$ be the $100u_i\%$-quantile of $\mu_n$. The reason why discretization is necessary is that the sets of break points in Theorems \ref{thm:monotone.characterization} and \ref{thm:log.concave.characterization} generally do not correspond to subsets of the data points and their quantile probabilities. Further study of these break points is a natural direction for future research.

\subsection{Monotone density estimation} \label{sec:implement.monotone}
We optimize over the convex subset $\tilde{\cQ}_{m}$ of quantile functions in $\cQ_{\cF_{m, 2}}$ that are piecewise affine with respect to $\Pi$. The Wasserstein projection can be recast as a quadratic programming problem.

Let $Q \in \tilde{\cQ}_{m}$ and define $q_i = Q(u_i)$, $i = 0, \ldots, K$. From Lemma \ref{lem:monotone.characterization}, we always have $q_0 = 0$. The vector ${\bf q} := (q_i)_{i = 1}^K$ satisfies the monotonicity constraint
\begin{equation} \label{eqn:monotone.constraint1}
0 = q_0 \leq q_1 \leq \cdots \leq q_K
\end{equation}
as well as the convexity constraint
\begin{equation} \label{eqn:monotone.constraint2}
\frac{q_i - q_{i-1}}{u_i - u_{i-1}} \leq \frac{q_{i+1} - q_i}{u_{i+1} - u_i}, \quad i = 1, \ldots, K - 1,
\end{equation}
which are both linear. Conversely, any vector ${\bf q} = (q_i)_{i = 1}^K$ satisfying \eqref{eqn:monotone.constraint1} and \eqref{eqn:monotone.constraint2} defines an element $Q_{{\bf q}}$ of $\tilde{\mathcal{Q}}_m$ by linear interpolation.
Let $\nu_{{\bf q}} := (Q_{{\bf q}})_{\#} \mathrm{Unif}(0, 1)$. If $q_0 < q_1 < \cdots < q_K$, the density of $\nu_{{\bf q}}$ exists and is given on $[0, z_K]$ by
\begin{equation} \label{eqn:monotone.estimated.density}
f_{{\bf q}}(x) = \frac{u_i - u_{i-1}}{q_i - q_{i-1}}, \quad q_{i-1} < x < q_i.
\end{equation}
Clearly, $f_{{\bf q}}(x) = 0$ for $x > q_K$. We have
\begin{equation} \label{eqn:W2.monotone}
\begin{split}
\cW_2^2(\nu_{{\bf q}}, \mu_n) &= \|Q - Q_0\|_{L^2([0, 1]}^2 \\
&= \sum_{i = 1}^K \int_{u_{i-1}}^{u_i} \left( \frac{u_i - u}{u_i - u_{i-1}} q_{i-1} +  \frac{u - u_{i-1}}{u_i - u_{i-1}} q_i - y_i \right)^2 \dd u\\
&= \sum_{i = 1}^n \frac{\Delta u_i}{3} \left( q_{i-1}^2 + q_i^2 + q_{i-1}q_i - 3y_i (q_{i-1} + q_i) + 3y_i^2 \right),
\end{split}
\end{equation}
which is convex and quadratic in ${\bf q}$. Thus, the Wasserstein projection problem
\begin{equation} \label{eqn:QP}
\min_{{\bf q}}  \cW_2^2(\nu_{{\bf q}}, \mu_n) \quad \text{subject to \eqref{eqn:monotone.constraint1} and \eqref{eqn:monotone.constraint2}}
\end{equation}
is a quadratic program. We implement \eqref{eqn:QP} using the R package \texttt{quadprog} \cite{quadprog}. 

We will compare the Wasserstein projection estimator with Grenander's estimator \cite{G56}. It is well known that the density of Grenander's estimator is given by the slopes of the least concave majorant of the distribution function of $\mu_n$. Equivalently, the quantile function of Grenander's estimator is the greatest convex minorant of $Q_0$.\footnote{More precisely, we enforce the constraint that the quantile function is $0$ at $u = 0$. This guarantees that the estimated density is non-increasing on $\bR_+$.} We implement the greatest convex minorant, and hence Grenander's estimator, using the R package \texttt{fdrtool} \cite{fdrtool}.

\begin{figure}[t!]
\includegraphics[scale = 0.48]{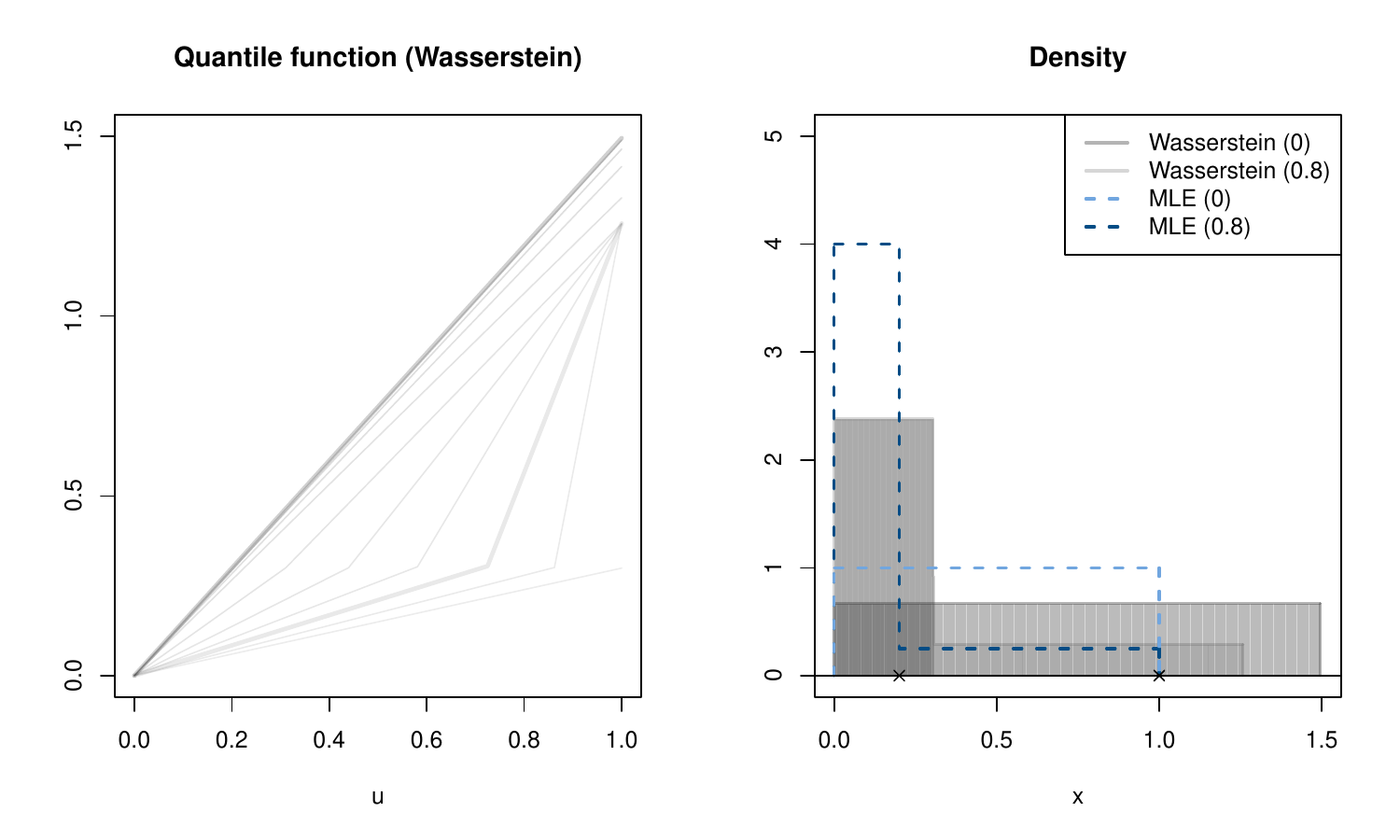}
\vspace{-0.4cm}
\caption{Left: Estimated quantile functions from the Wasserstein projection estimator, for the mixture distributions given in \eqref{eqn:monotone1} (Example \ref{eg:monotone1}). Right: Densities of the Wasserstein projection estimator (shaded) and Grenander's estimator (dashed), for two cases ($\lambda = 0, 0.8$) highlighted by thicker lines on the left panel. The support $\{0.2, 1\}$ of the data is shown by the crosses.} \label{fig:monotone1}
\end{figure}

\begin{example}[Mixture of two points] \label{eg:monotone1}
For $\lambda \in [0, 1]$, let $\mu_n$ be the mixture distribution
\begin{equation} \label{eqn:monotone1}
\mu_n = (1 - \lambda) \delta_{0.2} + \lambda \delta_1.
\end{equation}
For each $\lambda \in \{0, 0.1, \ldots, 0.9, 1\}$, we compute the Wasserstein projection estimator $\hat{\mu}_n$. We plot the quantile function of $\hat{\mu}_n$ in the left panel of Figure \ref{fig:monotone1}. 

When $\lambda = 0, 1$, $\mu_n$ reduces to $\delta_{0.2}$ and $\delta_{1}$ respectively. We see that $\hat{\mu}_n$ becomes respectively $\mathrm{Unif}(0, 0.3)$ and $\mathrm{Unif}(0, 1.5)$ as predicted by Example \ref{eg:point.mass} and equivariance (Proposition \ref{prop:equivariance}) under scaling. We see that a knot occurs for several intermediate values of $\lambda$. Nevertheless, its location is generally not equal to one of the quantile probabilities ($i/10$) of the data points. For the cases shown, the support of $\hat{\mu}_n$ is strictly larger than that of Grenander's estimate, namely $[0, \max_i x_i]$. In the right panel of Figure \ref{fig:monotone1} we plot the piecewise constant density of $\hat{\mu}_n$ for the cases $\lambda = 0.2, 1$. Likewise, the break points for the density do not form a subset of the data points. Again, this is different from the density of Grenander's estimator, which is also shown.
\end{example}

\begin{figure}[t!]
\includegraphics[scale = 0.48]{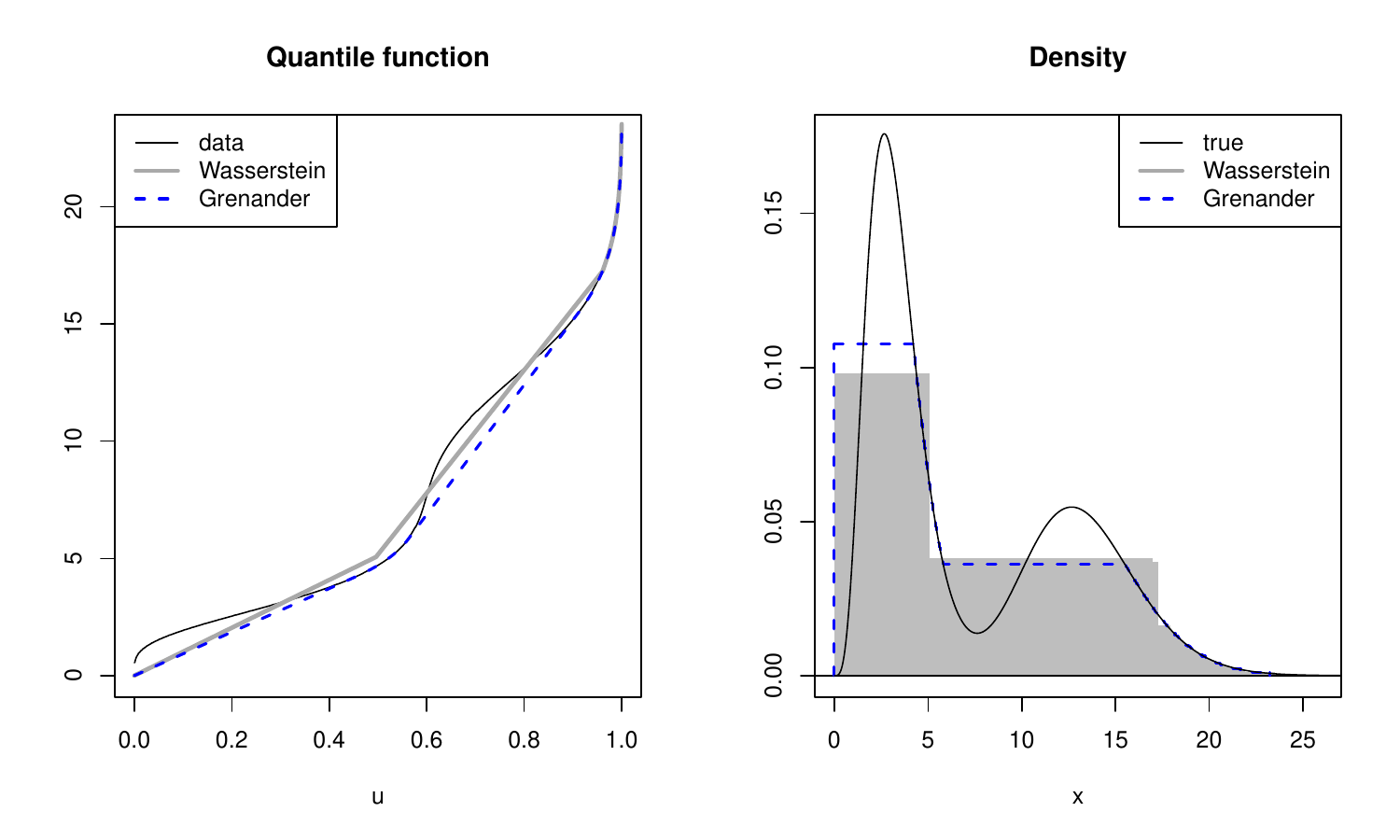}
\vspace{-0.4cm}
\caption{Left: Data and estimated quantile functions in the context of Example \ref{eg:monotone3}. Right: True and estimated densities.} \label{fig:monotone3}
\end{figure}

\begin{example}[A misspecified case] \label{eg:monotone3}
Consider $\mu_n = \sum_{i = 1}^{1000} (1/1000) \delta_{x_i}$, where $x_i$ is the $100v_i$\% quantile of $\mu^* = 0.6 \mathrm{Gamma}(5, 1.5) + 0.4 \mathrm{Gamma}(22, 1.5)$ (the second slot represents the rate parameter) and the sequence $(v_i)_{i = 1}^{1000}$ is a uniform grid between $0.001$ and $0.999$. These parameters are chosen for visual purposes. The quantile function of $\mu_n$ has two concave regions corresponding to the two peaks of the density. The Wasserstein projection $\hat{\mu}_n$ is approximately the $\cW_2$-projection of $\mu^*$ onto $\cF_{m, 2}$ (see Theorem \ref{thm:consistency}). On the other hand, Grenander's estimator is approximately the projection with respect to the Kullback--Leibler divergence. In Figure \ref{fig:monotone3}, we show their quantile and density functions. Both estimators track closely the right tail of the true density. By construction, the Wasserstein projection provides a better overall fit of the empirical quantile function with respect to the $L^2$ distance, while Grenander's estimator returns its greatest convex minorant. Observe that the smaller density of $\hat{\mu}_n$ on the left end leads to a closer fit of the quantile function around the interval $[0.5, 0.8]$ (this effect is not apparent from the density plot). On the other hand, Grenander's estimator traces $Q_0$ more closely around $u \approx 0.45$ and $u \approx 0.9$ where $Q_0$ turns upward. This shows that the two estimators lead to different tradeoffs.
\end{example}

\subsection{Log-concave density estimation} \label{sec:implement.log.concave}
We assume that the data values $y_i$ are not all the same as otherwise the projection 
$\hat{\mu}_n$ is a point mass. We optimize over the convex subset $\tilde{\cQ}_{lc}$ of bounded quantile functions $Q$ in $\cQ_{lc}$ such that $h := 1/Q'$ is positive, concave and piecewise affine with respect to the partition $\Pi$ (see Lemma \ref{lem:log.concavity} and Theorem \ref{thm:log.concave.characterization}). The reciprocal $1/h$ makes log-concave density estimation more complicated than monotone density estimation. Nevertheless, the discretized optimization problem is still convex. 

Let $c := Q(0) \in \bR$ and $h_i := h(u_i)$, $i = 0, \ldots, K$. Then, ${\bf h} := (h_i)_{i = 0}^K$ satisfies the positivity constraint
\begin{equation} \label{eqn:h.constraint1}
h_i > 0, \quad i = 0, \ldots, K,
\end{equation}
and the concavity constraint
\begin{equation} \label{eqn:h.constraint2}
\frac{h_i - h_{i-1}}{u_i - u_{i-1}} \geq \frac{h_{i+1} - h_{i}}{u_{i+1} - u_i}, \quad i = 1, \ldots, K - 1.
\end{equation}
Conversely, given $c \in \bR$ and ${\bf h} := (h_i)_{i = 0}^K$ satisfying \eqref{eqn:h.constraint1}--\eqref{eqn:h.constraint2}, we define a concave function $h_{{\bf h}}: [0, 1] \rightarrow (0, \infty)$ by linear interpolation, and $Q_{c, {\bf h}} \in \tilde{Q}_{lc}$ by
\begin{equation*}
Q_{c, {\bf h}}(u) := c + \int_0^u
\frac{1}{h_{{\bf h}}(s)} \dd s, \quad u \in [0, 1],
\end{equation*}
and $\nu_{c, {\bf h}} := (Q_{c, {\bf h}})_{\#} \mathrm{Unif}(0, 1)$. Let $q_i := Q_{c, {\bf h}}(u_i)$, $i = 0, \ldots, K$. Note that $q_0 = c$. For $u_{i-1} \leq u \leq u_i$, we have
\begin{equation} \label{eqn:Q.c.h}
\begin{split}
Q_{c, {\bf h}}(u) 
&= q_{i-1} + \frac{u - u_{i-1}}{h_{i-1}} \Phi\left( \frac{u - u_{i-1}}{u_i - u_{i-1}} \delta_i \right),
\end{split}
\end{equation}
where $\delta_i := h_i/h_{i-1} - 1 > -1$  and $\Phi: (-1, \infty) \rightarrow (0, \infty)$ is the convex decreasing function defined by
\[
\Phi(\eta) := 
\left\{\begin{array}{ll}
(1/\eta) \log (1 + \eta), & \text{if } \eta \neq 0; \\
1, & \text{if } \eta = 0.
\end{array}\right.
\]
In particular, $Q_{c, {\bf h}}$ is affine on $[u_{i-1}, u_i]$ if $\delta_i = 0$ ($h_{i-1} = h_i$), and $q_i = q_{i-1} + (\Delta u_i / h_{i-1}) \Phi(\delta_i)$. Inverting \eqref{eqn:Q.c.h} and differentiating, we see that the density of $\nu_{c, {\bf h}}$ is given by
\begin{equation} \label{eqn:log.concave.estimated.density}
f_{{\bf c, {\bf h}}}(x) = h_{i-1} e^{\frac{h_{i-1} \delta_i}{\Delta u_i} (x - q_{i-1})}, \quad q_{i-1} < x < q_i.
\end{equation}

Next, we compute
\begin{equation*}
\begin{split}
\cW_2^2(\nu_{c, {\bf h}}, \mu_n) &
= \sum_{i = 1}^K \int_{u_{i-1}}^{u_i} (Q_{c, {\bf h}}(u) - y_i)^2 \dd u = \sum_{i = 1}^K (I_{i,1} + I_{i,2} + I_{i,3}),
\end{split}
\end{equation*}
where
\begin{equation} \label{eqn:log.concave.W2.integrals}
\begin{split}
I_{i,1} &:= (q_{i-1} - y_i)^2 \Delta u_i,\\
I_{i,2} &:= \frac{2(q_{i-1} - y_i) (\Delta u_i)^2}{h_{i-1}} \int_0^1 \lambda \Phi(\delta_i \lambda) \dd \lambda, \\
I_{i,3} &:= \frac{(\Delta u_i)^3}{h_{i-1}^2}  \int_0^1 \lambda^2 (\Phi(\delta_i \lambda))^2 \dd \lambda
\end{split}
\end{equation}
are smooth and convex functions of $(c, {\bf h})$.\footnote{Explicit expressions of the integrals in $I_{i,2}$ and $I_{i,3}$ are omitted.} We implement the convex program
\begin{equation} \label{eqn:log.concave.convex.program}
\min_{c, {\bf h}} \cW_2^2(\nu_{c, {\bf h}}, \mu_n) \quad  \text{subject to \eqref{eqn:h.constraint1} and \eqref{eqn:h.constraint2}}
\end{equation}
simply by the general purpose optimization R package \texttt{nloptr} \cite{nloptr}. We implement the univariate log-concave MLE using the R package \texttt{logcondens} \cite{longcondens}. It is well known that the density of the MLE is supported on the convex hull of the data. Moreover, it is piecewise log-affine and each knot occurs at one of the data points.

\begin{figure}[t!]
\includegraphics[scale = 0.48]{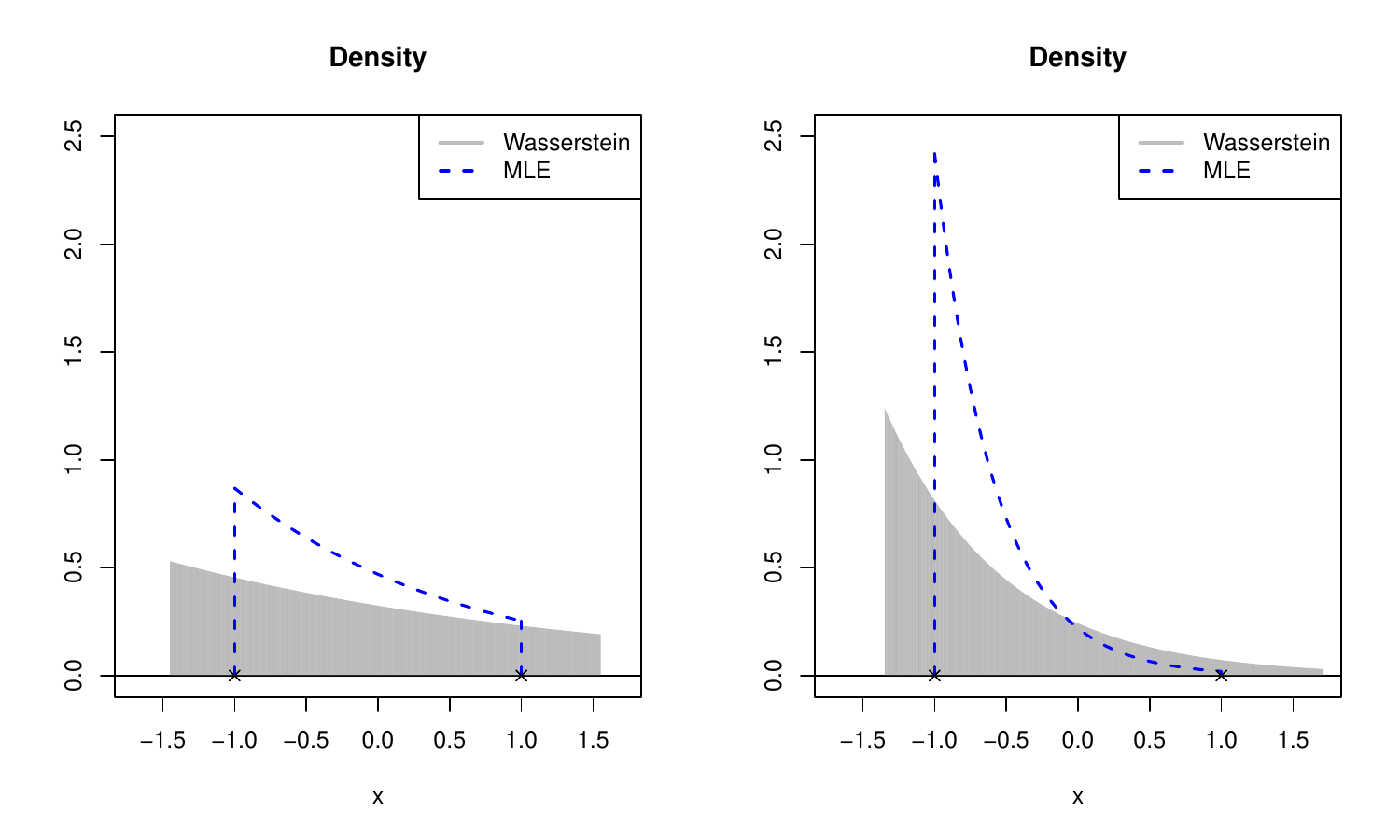}
\vspace{-0.4cm}
\caption{Estimated densities for the two-point distribution in Example \ref{eg:logconcave1}.
Left: $\lambda = 0.4$. Right: $\lambda = 0.2$.} \label{fig:logconcave1}
\end{figure}

\begin{example}[Mixture of two points] \label{eg:logconcave1}
For $\lambda \in [0, 1]$, let $\mu_n$ be the mixture distribution
\[
\mu_n = (1 - \lambda) \delta_{-1} + \lambda \delta_1.
\]
When $\lambda \in \{0, 1\}$, both the Wasserstein projection estimator and the MLE reduce to a point mass. When $\lambda = 0.5$, the maximum likelihood returns $\mathrm{Unif}(-1, 1)$, and the Wasserstein projection estimator gives $\mathrm{Unif}(-1.5, 1.5)$ as seen in Example \ref{eg:log.concave.uniform}. In Figure \ref{fig:logconcave1} we plot the estimated density when $\lambda \in \{0.4, 0.2\}$. While the MLE always has support $[0, 1]$ (when $\lambda \neq 0, 1$), we observe that the density of the Wasserstein projection estimator has a wider support in both cases.
\end{example}

\begin{figure}[t!]
\includegraphics[scale = 0.48]{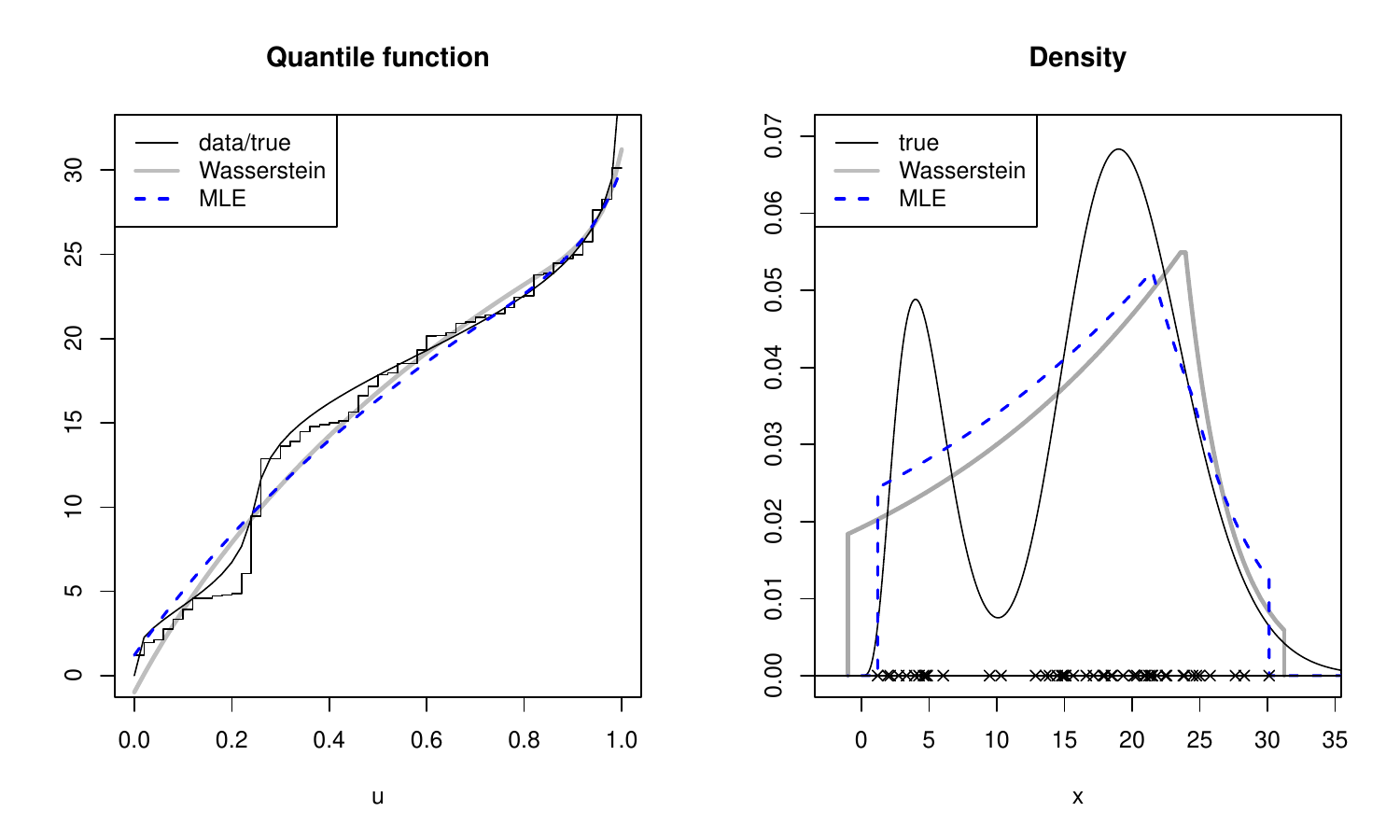}
\vspace{-0.4cm}
\caption{Left: Empirical, true and estimated quantile functions in the context of Example \ref{eg:logconcave2}. Right: True and estimated densities. The data points are shown by the crosses.} \label{fig:logconcave2}
\end{figure}

\begin{example}[A misspecified case] \label{eg:logconcave2}
Let $\mu_n$ be the empirical distribution of $n = 50$ i.i.d.~samples from $\mu^* = 0.25 \mathrm{Gamma}(5, 1) + 0.75 \mathrm{Gamma}(20, 1)$ (again the parameters are chosen for illustration purposes). Being bimodal, $\mu^*$ is not log-concave. The estimated quantile functions and the densities are shown in Figure \ref{fig:logconcave2}. The maximum likelihood estimate and the Wasserstein projection estimate are qualitatively quite similar, except that the support of the former is the convex hull of the data, while that of the latter is slightly wider. The fact that the true distribution is supported on $\bR_+$ is not used in the Wasserstein projection.\footnote{If desired, projection to $\cF_{lc} \cap \cP_2(\bR_+)$ can be easily enforced in the optimization problem \eqref{eqn:log.concave.convex.program} by adding the constraint $c = Q_{c, {\bf h}}(0) = \min \mathrm{support}(\nu_{c, {\bf h}}) \geq 0$.} Without imposing this constraint, the estimated density gives a small amount of mass on the left hand side of zero.
\end{example}

\section{Discussion}  \label{sec:discussion}
In this paper, we study shape-constrained density estimation by Wasserstein projection in the univariate setting. We focus on monotone and log-concave density estimation which are some of the most important shape constraints used in practice. In both cases, we establish fundamental results in the univariate setting and provide empirical examples. Nevertheless, many more questions arise:
\begin{enumerate}
    \item[1.] In Theorems \ref{thm:monotone.characterization} and Theorem \ref{thm:log.concave.characterization}, we showed that the estimated density is piecewise affine/log-concave. Our proof techniques do not provide further information about the number of break points and their locations. Improved understanding of these break points will likely suggest more precise and efficient algorithms. 
    \item[2.] Further statistical properties of the Wasserstein projection estimator. Here, finite sample properties are only investigated in Wasserstein distance when $p = 2$. A natural direction is to investigate convergence properties of the estimated density. 
    \item[3.] Interpolation between Wasserstein projection and Kullback--Leibler (maximum likelihood) projection. For example, one may consider an interpolating distance between Wasserstein (Otto) and {F}isher--{R}ao metrics \cite{CPSV18}. It is also natural to ask if projecting with respect to the entropically regularized Wasserstein distance, also called the Sinkhorn divergence, leads to different or improved behaviors. In particular, it was observed in \cite{RW18} that entropic optimal transport is related to maximum likelihood deconvolution. A Riemannian geometry induced by the Sinkhorn divergence was introduced recently in \cite{LLMST24}.
    \item[4.] Shape-constrained estimation of multivariate distributions by Wasserstein projection, especially in the log-concave setting. Since the space of log-concave distributions on $\bR^{d}$ is not displacement convex for $d \geq 2$ \cite{santambrogio2016convexity}, it is not immediate whether the Wasserstein projection is well-posed. Even if this is the case, computation of the Wasserstein projection is non-trivial due to the curvatures of the Wasserstein space $(\cP_2(\bR^d), \cW_2)$ and the model. For further discussions of Wasserstein projection see \cite{PSW25, VCBK25} and the references therein.
\end{enumerate}

\section*{Acknowledgment}
Takeru Matsuda was supported by JSPS KAKENHI Grant Numbers 21H05205, 22K17865 and 24K02951 and JST Moonshot Grant Number JPMJMS2024. Ting-Kam Leonard Wong acknowledges support by the NSERC Discovery Grant RGPIN-2025-06021. He thanks Piotr Zwiernik, Ricardo Baptista and Bodhisattva  Sen for helpful discussions.

\appendix

\bibliographystyle{plain}
\bibliography{references} 
\end{document}